\newtheorem{theorem}{\sc Theorem}[section]
\newtheorem{lemma}[theorem]{\sc Lemma}
\newtheorem{proposition}[theorem]{\sc Proposition}
\newtheorem{corollary}[theorem]{\sc Corollary}
\newcommand{\al }{\alpha }
\begin{document}

\title[Coprime automorphisms of finite groups]{Coprime automorphisms of finite groups}
\author{Cristina Acciarri}

\address{Cristina Acciarri: Dipartimento di Scienze Fisiche, Informatiche e Matematiche, Universit\`a degli Studi di Modena e Reggio Emilia, Via Campi 213/b, I-41125 Modena, Italy  and\newline
Department of Mathematics, University of Brasilia, Brasilia-DF, 70910-900 Brazil}
\email{cristina.acciarri@unimore.it}

\author{Robert M. Guralnick}

\address{Robert M. Guralnick:  Department of Mathematics, University of Southern California, Los Angeles, CA90089-2532, USA}
\email{guralnic@usc.edu}

\author{Pavel Shumyatsky }
\address{ Pavel Shumyatsky: Department of Mathematics, University of Brasilia,
Brasilia-DF, 70910-900 Brazil}
\email{pavel@unb.br}
\thanks{The first and the third authors were supported by  the Conselho Nacional de Desenvolvimento Cient\'{\i}fico e Tecnol\'ogico (CNPq),  and Funda\c c\~ao de Apoio \`a Pesquisa do Distrito Federal (FAPDF), Brazil. The second author was partially supported by a Simons
Foundation fellowship and NSF grant DMS-1901595.}
\keywords{Finite groups, automorphisms}
\subjclass[2010]{20D45}

\begin{abstract} 
Let $G$ be a finite group admitting a coprime automorphism $\al$ of order $e$. Denote by $I_G(\al)$ the set of commutators $g^{-1}g^\al$, where $g\in G$, and by $[G,\al]$ the subgroup generated by $I_G(\al)$. We study the impact of $I_G(\al)$ on the structure of $[G,\al]$. Suppose that each subgroup generated by a subset of $I_G(\al)$ can be generated by at most $r$ elements. We show that the rank of $[G,\al]$ is $(e,r)$-bounded. Along the way, we establish several results of independent interest. In particular, we prove that if every element of $I_G(\al)$ has odd order, then $[G,\al]$ has odd order too. Further, if every pair of elements from $I_G(\al)$ generates a soluble, or nilpotent, subgroup, then $[G,\al]$ is soluble, or respectively nilpotent.
\end{abstract}

\maketitle

\section{Introduction}

An automorphism $\al$ of a finite group $G$ is coprime if $(|G|,|\al|)=1$. We denote by $C_G(\al)$ the fixed-point subgroup $\{x\in G; \ x^\al=x\}$ and by $I_G(\al)$ the set of all commutators $g^{-1}g^\al$, where $g\in G$. Then $[G,\al]$ stands for the subgroup generated by $I_G(\al)$. 

It is well known that properties of the centralizer $C_G(\al)$ of a coprime automorphism have strong influence over the structure of $G$. There is a wealth of results illustrating this phenomenon, probably the most famous of which is Thompson's Theorem that if $\al$ has prime order and $C_G(\al)=1$, then $G$ is nilpotent \cite{tho}. Over the years, this was generalized in several directions. In particular, Khukhro proved that if $G$ admits an automorphism $\al$ of prime order $p$ with $C_G(\al)$ of order $m$, then $G$ has a nilpotent subgroup of $(m,p)$-bounded index and $p$-bounded class \cite{khu1}. Throughout, we use the term $(a,b,c\dots)$-bounded to mean ``bounded from above by some function depending only on the parameters $a,b,c\dots$".  Further, if $G$  admits a coprime automorphism $\al$ of prime order $p$ with $C_G(\al)$ of rank $r$, then $G$ has characteristic subgroups $R$ and $N$ such that $N/R$ is nilpotent of $p$-bounded class, while $R$ and $G/N$ have $(p,r)$-bounded ranks \cite{khu2}. Recall that the rank of a finite group $G$ is the least number $r$ such that each subgroup of $G$ can be generated by at most $r$ elements.

Given a coprime automorphism $\al$ of a finite group $G$, there is a kind of (rather vague) duality between $C_G(\al)$ and $I_G(\al)$. Note that since $|G|=|C_G(\al)||I_G(\al)|$, if one of $C_G(\al), I_G(\al)$ is large then the other is small.  Our purpose in the present article is to show that also properties of $I_G(\al)$ may strongly impact the structure of $G$. It is easy to see that if $|I_G(\al)|\leq m$, then the order of $[G,\al]$ is $m$-bounded. Indeed, since $|I_G(\alpha)|\leq m$, the index of the centralizer $[G:C_G(\alpha)]$ is at most $m$ and we can choose a  normal subgroup $N\leq C_G(\alpha)$ such that $[G:N]\leq m!$. Observe that $[G,\alpha]$ commutes with $N$ (see item (iii) of Lemma \ref{20}) and therefore the centre of $[G,\alpha]$ has index at most $m!$. The Schur theorem \cite[Theorem 4.12]{rob} now tells us that $[G,\alpha]'$ has $m$-bounded order. We can pass to the qutient $G/[G,\alpha]'$ and, without loss of generality, assume that $[G,\alpha]$ is abelian. But then $[G,\alpha]=I_G(\alpha)$ and so $|[G,\alpha]|\leq m$.
We address the question whether a rank condition imposed on the set $I_G(\al)$ has an impact on the structure of $G$. We emphasize that $I_G(\al)$ in general is not a subgroup and therefore the usual concept of rank does not apply to $I_G(\al)$. Instead we consider the condition that each subgroup of $G$ generated by a subset of $I_G(\al)$ can be generated by at most $r$ elements. Our main result is as follows.
\begin{theorem} \label{main}
Let $G$ be a finite group admitting a coprime automorphism $\al$ of order $e$ and suppose that any subgroup generated by a subset of $I_G(\al)$ can be generated by $r$ elements. Then $[G,\al]$ has $(e,r)$-bounded rank.
\end{theorem}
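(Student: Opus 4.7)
The plan is to reduce to the case $G=[G,\al]$, then to bound separately the rank of the soluble radical and of the non-soluble part of $G$, and finally to assemble these via the standard Lubotzky--Mann rank estimate. First I would replace $G$ by $[G,\al]$; by the coprime identity $[[G,\al],\al]=[G,\al]$, this reduction is justified, and afterwards $G$ itself is generated by $I_G(\al)$, so the hypothesis immediately gives $d(G)\le r$. The real task is to upgrade this bound on the minimal number of generators of $G$ into a bound on the rank of $G$, i.e., on $d(H)$ for every subgroup $H\le G$.

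For the soluble radical $R=R(G)$ the hypothesis applies in its sharpest form. For each prime $p$ I would choose an $\al$-invariant Sylow $p$-subgroup $P$ of $R$; then $I_P(\al)\subseteq I_G(\al)$, so every subgroup of $[P,\al]$ generated by a subset of $I_P(\al)$ has rank at most $r$. Via the coprime Fitting decomposition $P=C_P(\al)[P,\al]$ and Lubotzky--Mann rank theory for finite $p$-groups, this forces a uniform rank bound on $[P,\al]$. Combining this over all primes with an $(e,r)$-bound on the Fitting length of $R$, drawn from the intermediate soluble-by-pairs lemma announced in the abstract, then yields an $(e,r)$-bounded rank for $R$.

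For the non-soluble part one must show that every non-abelian composition factor $S$ of $G$ has $(e,r)$-bounded order. The induced coprime automorphism $\bar\al$ on $S$ satisfies $S=[S,\bar\al]=\langle I_S(\bar\al)\rangle$, and $I_G(\al)$ surjects onto $I_S(\bar\al)$, so lifting generators gives $d(S)\le r$; together with the classification of finite simple groups this reduces the problem to excluding alternating composition factors $A_n$ of arbitrarily large degree. Once this is accomplished, the layer $E(G)$ and the quotient $G/R(G)E(G)$ have $(e,r)$-bounded rank, and the Lubotzky--Mann rank estimate assembles these with the bound on $R$ to finish the proof. I expect this exclusion of large alternating composition factors to be the \emph{main obstacle}: the hypothesis constrains only subgroups of $G$ generated by commutators from $I_G(\al)$, which does not directly translate into a rank constraint on arbitrary subgroups of $S$, so one must exploit the coprime action of $\al$ jointly with the pair-solubility lemma and classification-based generation results for $A_n$ to force the degree $n$ to be bounded.
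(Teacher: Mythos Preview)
There are two genuine gaps.

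First, your treatment of the non-soluble part does not do what you claim. The bound $d(S)\le r$ for a simple composition factor $S$ is vacuous: every finite simple group is $2$-generated, so this places no constraint whatsoever on $|S|$ or on the rank of $S$. In particular there is nothing special about alternating groups, and your ``main obstacle'' is misidentified. What actually controls the simple factors is the coprimeness: a nontrivial coprime automorphism of a simple group forces it to be of Lie type with $\al$ a field automorphism (so alternating factors with $\al$ acting nontrivially never arise), and one then shows that a rank bound on $[P,\al]$ for $\al$-invariant Sylow subgroups $P$ bounds both the Lie rank and the field. When $\al$ merely permutes simple factors $S_1,\dots,S_k$ transitively, a one-line diagonal trick suffices: for any $H\le S_1$ the subgroup $\langle h^{-1}h^{\al}:h\in H\rangle\le S_1\times S_1^{\al}$ is generated by a subset of $I_G(\al)$, hence is $r$-generated, and it surjects onto $H$; thus $\mathrm{rank}(S_1)\le r$.

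Second, and more seriously, your argument for the soluble radical $R$ has a hole. A bound on the rank of $[P,\al]$ for each $\al$-invariant Sylow $P$ of $R$, together with a bound on the Fitting height, does \emph{not} by itself bound the rank of $R$: a normal $p$-subgroup $N\le G$ need not satisfy $N=[N,\al]$, and $C_N(\al)$ can be large (it is only forced to lie in the centre of $G$). The ingredient you are missing is a bound on the rank of $[N,G]$ for abelian normal $N$. The paper obtains this by first proving that $G$ itself is generated by $(e,r)$-boundedly many elements $b_1,\dots,b_t\in I_G(\al)$ --- a nontrivial fact requiring separate arguments in the soluble, semisimple, and soluble-by-semisimple-by-soluble cases --- and then observing that $[N,G]=\prod_i[N,b_i]$ with each $[N,b_i]$ of rank at most $2r$, since $C_N(\al)\cap C_N(\al)^{a_i^{-1}}$ centralises $b_i=a_i^{-1}a_i^{\al}$. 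Only with this in hand can one bound the number of generators of a normal $p$-subgroup modulo its Frattini subgroup and then invoke the powerful $p$-group machinery you allude to.
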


The much easier particular case of the theorem for $e=2$ was earlier dealt with in \cite{as}. 
As might be expected, Theorem \ref{main} depends on the classification of finite simple groups. Along the way, we establish several results of independent interest. In particular, we prove the following result.

\begin{theorem} \label{main2} Let $G$ be a finite group admitting a coprime automorphism $\al$ such that $g^{-1}g^\al$ has odd order for every $g\in G$. Then $[G,\al]\leq O(G)$.
\end{theorem}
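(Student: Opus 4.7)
The plan is to argue by minimal counterexample: I take $G$ to be a finite group of smallest order admitting a coprime automorphism $\al$ with every element of $I_G(\al)$ of odd order, yet $[G,\al]\not\leq O(G)$. The hypothesis descends to $\al$-invariant subgroups and quotients, and coprime action gives $[[G,\al],\al]=[G,\al]$; so after replacing $G$ by $[G,\al]$ and then passing to $G/O(G)$, I may assume $G=[G,\al]$, $O(G)=1$, and $|G|$ is even.

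The next step is to analyse $F^*(G)$. For $g\in O_2(G)$, the commutator $g^{-1}g^\al$ lies in $O_2(G)$ and has odd order, hence is trivial; so $[O_2(G),\al]=1$. To show that $\al$ stabilizes each component of $E(G)$, I suppose $\al$ maps a component $L$ to a distinct component $L^\al$. Since $L/Z(L)$ is nonabelian simple it contains an involution $\bar x$, which I lift to $x\in L$. Then $x$ and $x^\al$ lie in distinct components and hence commute, and a short cyclic-group computation, based on the observation that $y:=x^{-1}x^\al$ satisfies $y^m=1$ exactly when $x^m\in L\cap L^\al\leq Z(L)$, shows that $|y|$ is a multiple of $|\bar x|=2$ and therefore even, contradicting the hypothesis.

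Minimality now eliminates proper components. If a component $L$ is strictly contained in $G$, minimality applied to $L$ yields $[L,\al]\leq O(L)$; quasisimplicity of $L$ and the even order of $L/Z(L)$ force $O(L)\leq Z(L)$, so $\al$ acts trivially on $L/Z(L)$ and thus, since $L$ is perfect, on $L$ itself. If every component were proper, $\al$ would centralize $E(G)$ and $O_2(G)$, hence all of $F^*(G)$. Bender's theorem $C_G(F^*(G))\leq F^*(G)$ then implies that $g$ and $g^\al$ act identically on $F^*(G)$ by conjugation, so $g^{-1}g^\al\in Z(F^*(G))\leq C_G(\al)$, and coprime action gives $[G,\al]=[G,\al,\al]=1$, against $G=[G,\al]\ne 1$. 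The only remaining possibility is that $G$ itself is quasisimple and $\al$ induces a nontrivial coprime automorphism on $S:=G/Z(G)$.

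The final step relies on the classification of finite simple groups. An $\al$-invariant Sylow $2$-subgroup $T$ of $G$ exists by coprime action, and the argument used for $O_2(G)$ applies again to give $T\leq C_G(\al)$; thus the induced automorphism of $S$ is a nontrivial coprime automorphism centralizing a full Sylow $2$-subgroup. The main obstacle of the proof is this CFSG input: alternating and sporadic simple groups admit essentially no nontrivial coprime automorphisms, while for Lie-type groups one must verify case by case that no odd-order field, diagonal, or graph-field automorphism can fix a Sylow $2$-subgroup pointwise. Once this is established, the contradiction is complete and Theorem \ref{main2} follows.
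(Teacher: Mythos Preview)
Your reduction to a (quasi)simple minimal counterexample is sound and parallels the paper's. The observation that $\al$ must centralize an $\al$-invariant Sylow $2$-subgroup $T$ is also correct. The gap is in the final step: the CFSG fact you invoke is false. A coprime field automorphism of a simple group of Lie type in odd characteristic \emph{can} centralize a full Sylow $2$-subgroup. Take $G=\mathrm{PSL}_2(q)$ with $q=q_0^e$, $q_0$ odd, $e$ odd. Then $(q^2-1)/(q_0^2-1)=1+q_0^2+\cdots+q_0^{2(e-1)}$ is a sum of $e$ odd terms, hence odd, so the Sylow $2$-subgroups of $G$ and of $C_G(\al)=\mathrm{PSL}_2(q_0)$ have the same order; by coprime action $C_T(\al)$ is a Sylow $2$-subgroup of $C_G(\al)$, whence $C_T(\al)=T$. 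For a concrete coprime instance take $q_0=3$, $e=5$: here $|\mathrm{PSL}_2(3^5)|=3^5\cdot 2^2\cdot 11^2\cdot 61$ is prime to $5$, and the field automorphism of order $5$ centralizes a Klein four Sylow $2$-subgroup. So knowing $T\leq C_G(\al)$ is not, by itself, a contradiction.

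This is exactly why the paper works harder at this point. After reducing (via Levi subgroups) to $\mathrm{PSL}_2(q)$, it proves by a character-sum estimate (Lemmas~\ref{psl} and~\ref{psleven}) that $CC^{-1}=H$ for $C=\al^H$, which amounts to showing that \emph{every} element of $H$ is conjugate to some $g^{-1}g^\al$; in particular $I_G(\al)$ meets a class of involutions. That argument genuinely uses more than the order of $C_G(\al)$: the class equation shows that the obstruction to $y\in CC^{-1}$ is governed by the values $\phi(\al)$ over $\al$-invariant characters, and the bound $|\phi(\al)|\leq q_0+1$ together with $q\geq q_0^5$ forces the sum to be nonzero. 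Your approach could be salvaged only by replacing the Sylow-$2$ claim with something of comparable strength, and I do not see a shortcut around the character computation in odd characteristic.
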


As usual, $O(G)$ stands for the maximal normal subgroup of odd order of $G$. Recall that an immediate corollary of Glauberman's celebrated $Z^*$-theorem is that if $G$ contains an involution $x$ such that $[g,x]$ has odd order for every $g\in G$, then $[G,x]\leq O(G)$ \cite{glau}. A theorem obtained in \cite{guro} states that if $G$ contains an element $x$ of prime order $p$ such that $[g,x]$ has $p$-power order for every $g\in G$, then $[G,x]\leq O_p(G)$. Thus, one may wonder whether the assumption that $\al$ is coprime in Theorem \ref{main2} is really necessary. In  Section 3 we give  examples showing that the theorem is no longer true if the assumption is omitted.

It is well known that if any pair of elements of a finite group generates a soluble (respectively nilpotent) subgroup, then the whole group is soluble (respectively nilpotent).  Indeed, Guest \cite{gue} showed that if $g$ is of prime order at least $5$ and if every two conjugates of $g$
generate a soluble group, then $g$ is in the soluble radical of $G$. 
We will establish a similar result for groups with coprime automorphisms.
\begin{theorem} \label{main3} Let $G$ be a finite group admitting a coprime automorphism $\al$. If any pair of elements from $I_G(\al)$ generates a soluble subgroup, then $[G,\al]$ is soluble. If any pair of elements from $I_G(\al)$ generates a nilpotent subgroup, then $[G,\al]$ is nilpotent.
\end{theorem}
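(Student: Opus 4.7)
The plan is to prove both parts together by induction on $|G|$. As a preliminary reduction, use the identity $I_G(\al) = I_{[G,\al]}(\al)$, valid in the coprime setting because $G = C_G(\al) \cdot [G,\al]$, to replace $G$ by $[G,\al]$ and assume throughout that $G = [G,\al]$. For any $\al$-invariant normal subgroup $N$, the image of $I_G(\al)$ in $G/N$ is exactly $I_{G/N}(\al)$, so the hypothesis descends to $G/N$ since both solubility and nilpotency of finitely generated subgroups pass to homomorphic images.

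\emph{The soluble part.} Let $R$ denote the soluble radical of $G$; it is characteristic and therefore $\al$-invariant. If $R \neq 1$, then by induction $[G/R,\al] = G/R$ is soluble, forcing $G$ itself soluble. So assume $R = 1$, in which case the socle $N$ of $G$ is a direct product of non-abelian simple groups and $G$ embeds into $\mathrm{Aut}(N)$. Schreier's conjecture, a consequence of CFSG, makes $G/N$ soluble, so the non-solubility must live inside $N$. Now invoke Guest's theorem cited in the introduction: an element of prime order at least $5$ whose pairs of $G$-conjugates always generate a soluble subgroup must lie in $R(G) = 1$. One picks $h \in I_G(\al)$ of some prime order $p \geq 5$, available because $G$ being non-soluble forces such a prime to divide $|G|$ and $G = \langle I_G(\al) \rangle$, and seeks to verify the Guest hypothesis on all pairs $\{h, h^g\}$ with $g \in G$ in order to reach a contradiction.

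\emph{The nilpotent part.} The soluble case already yields $G = [G,\al]$ soluble under the stronger hypothesis, so it remains to force nilpotency. The strategy is Baer's criterion: an element $h \in G$ lies in $F(G)$ precisely when $\langle h, h^g \rangle$ is nilpotent for every $g \in G$. It suffices to show $I_G(\al) \subseteq F(G)$, since then $G = \langle I_G(\al) \rangle \leq F(G)$ is nilpotent. Given $h \in I_G(\al)$ and $g \in G$, decompose $g = c x$ with $c \in C_G(\al)$ and $x \in [G,\al]$; conjugation by $c$ keeps us inside $I_G(\al)$, so only the case $g = x = y^{-1}y^\al$ requires work. A direct commutator manipulation in the semidirect product $G \rtimes \langle \al \rangle$ should allow one to express $h^x$ in terms of elements of $I_G(\al)$, bringing the pairwise hypothesis to bear (possibly in tandem with a secondary induction on $|G|$ after quotienting by a minimal $\al$-invariant normal subgroup contained in $F(G)$).

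In both parts, the principal obstacle is the same: $I_G(\al)$ is $C_G(\al)$-invariant but not $G$-invariant, so the pairwise hypothesis does not immediately supply the input needed by Guest's or Baer's theorem. Bridging this gap — showing that conjugation by arbitrary elements of $[G,\al]$ can be simulated inside the world controlled by $I_G(\al)$ — requires careful use of the coprimality $(|G|,|\al|) = 1$, together with the factorisation $G = C_G(\al)\cdot [G,\al]$. In the almost-simple reduction for the soluble part, I expect to need detailed CFSG information on which non-abelian simple (or almost simple) groups can be generated by a coprime-automorphism commutator set whose pairs generate soluble subgroups, in order to exclude these configurations and complete the contradiction.
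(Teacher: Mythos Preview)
Your proposal correctly identifies the central obstacle but does not overcome it, and in fact the approach via Guest's theorem and Baer's criterion cannot be made to work directly. Both of those results require that $\langle h,h^g\rangle$ be soluble (respectively nilpotent) for \emph{every} $g\in G$, whereas the hypothesis only controls pairs drawn from $I_G(\al)$. Since $I_G(\al)$ is invariant under $C_G(\al)$ but not under $G$, and since a conjugate $h^g$ with $g\notin C_G(\al)$ need not lie in $I_G(\al)$ (nor be expressible as a product of boundedly many elements of $I_G(\al)$ in any useful way), the ``direct commutator manipulation'' you allude to does not exist. Your closing sentence hints that you would eventually fall back on CFSG case analysis, but nothing you have written connects Guest's theorem to that analysis; the Guest/Baer framework is a dead end here, not a reduction.

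The paper's argument is structurally different. For the soluble part, a minimal counterexample is shown (as in the proof of Theorem~\ref{main2}) to be either simple or a transitive wreath-type product of simples. In the latter case one sees directly that $I_G(\al)$ projects onto an arbitrary coordinate, so two elements of $I_G(\al)$ can generate a simple factor. In the simple case, minimality together with the Levi-subgroup descent of Theorem~\ref{main2} forces $G=\mathrm{PSL}_2(q)$ or $\mathrm{Sz}(q)$; the character-theoretic Lemmas~\ref{psl} and~\ref{psleven} then show that every element of $\mathrm{PSL}_2(q)$ is conjugate to an element of $I_G(\al)$, and explicit knowledge of maximal subgroups finishes both cases. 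For the nilpotent part, the paper avoids Baer entirely: once $G=[G,\al]$ is known to be soluble, a minimal counterexample has the form $G=NH$ with $N$ an elementary abelian $p$-group and $H=[H,\al]$ a nilpotent $p'$-group; Lemma~\ref{55} gives $G=\langle I_N(\al),I_H(\al)\rangle$, and since any element of $I_N(\al)$ together with any element of $I_H(\al)$ generates a nilpotent group of coprime-order components, they must commute, forcing $I_N(\al)\le Z(G)$.
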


Note that in $S_n$ for $n\ge 5$ any four transpositions generate a soluble subgroup. Furthermore there are almost simple groups containing elements of order $3$ such that any three conjugates generate a soluble subgroup (see Guest \cite{gue}).These examples show that the coprimeness assumption is needed in the previous theorem.  It seems likely that  the assumption can be removed in the case where $\al$ is of prime order at least $5$. 

\section{Preliminary results}

All groups considered in this paper are finite. The Feit-Thompson theorem that groups of odd order are soluble \cite{FT} will be used without explicit references. We start with a collection of well-known facts about coprime automorphisms of finite groups (see for example \cite{gore}).

\begin{lemma}\label{20} Let a group $G$ admit a coprime automorphism $\al$. The following conditions hold:
\begin{itemize} 
\item[(i)] $G=[G,\al]C_G(\al)$ and $|I_G(\al)|=[G:C_G(\al)]$;
\item[(ii)] If $N$ is any $\al$-invariant normal subgroup of $G$ we have $C_{G/N}(\al)=C_G(\al)N/N$, and $I_{G/N}(\al)=\{gN \mid g\in I_G(\al)\}$;
\item[(iii)] If N is any $\al$-invariant normal subgroup of $G$ such that $N=C_N(\al)$, then $[G,\al]$ centralizes $N$;
\item[(iv)] The group $G$ possesses an $\al$-invariant Sylow $p$-subgroup for each prime $p\in\pi(G)$.
\end{itemize}
\end{lemma}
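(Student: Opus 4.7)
The plan is to verify the four items using the standard coprime-action machinery, whose underlying engine is Schur--Zassenhaus (equivalently, the vanishing of $H^1$ for coprime actions) together with the Glauberman--Thompson theorem on existence of invariant Sylow subgroups. None of the four points is genuinely difficult; this lemma simply records the toolkit used throughout the paper, so the write-up should be compact, and the main care is to order the steps so that later items can quote earlier ones.

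I would start with the first half of (ii), since (i) and (iii) both rest on it. The inclusion $C_G(\al)N/N\subseteq C_{G/N}(\al)$ is tautological, while for the reverse inclusion the argument is: if $\bar g\in C_{G/N}(\al)$ then the coset $gN$ is $\al$-invariant, so inside $G\rtimes\langle\al\rangle$ the subgroup $\langle gN,\al\rangle=N\langle g,\al\rangle$ falls under Schur--Zassenhaus with $N$ as the coprime kernel, producing an $\al$-fixed representative in $gN$. The identification $I_{G/N}(\al)=\{gN:g\in I_G(\al)\}$ is then routine: the inclusion $\supseteq$ is immediate, and for $\subseteq$ one writes an arbitrary element of $I_{G/N}(\al)$ as $\bar h^{-1}\bar h^\al$ and uses the lift just constructed to replace $h$ by a representative whose $\al$-commutator in $G$ maps to it.

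For (i), the identity $|I_G(\al)|=[G:C_G(\al)]$ follows from the observation that the map $g\mapsto g^{-1}g^\al$ has fibres equal to the right cosets of $C_G(\al)$, since $g^{-1}g^\al=h^{-1}h^\al$ iff $hg^{-1}\in C_G(\al)$. The decomposition $G=[G,\al]C_G(\al)$ then follows by applying (ii) to $N=[G,\al]$: by definition $\al$ acts trivially on $G/[G,\al]$, so $G/[G,\al]=C_{G/[G,\al]}(\al)=C_G(\al)[G,\al]/[G,\al]$. For (iii), the key line is that for $n\in N=C_N(\al)$ and $g\in G$ the conjugate $n^g$ again lies in $N$ and is therefore $\al$-fixed, so $n^g=(n^g)^\al=(n^\al)^{g^\al}=n^{g^\al}$, which rearranges to $[n,g^{-1}g^\al]=1$; letting $g$ range over $G$ and $n$ over $N$ gives $[[G,\al],N]=1$. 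Finally, (iv) is the classical Glauberman--Thompson theorem and I would cite it from \cite{gore} rather than reprove it.

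The only care needed is in (ii), where one must keep straight the distinction between $G$ and the semidirect product $G\rtimes\langle\al\rangle$: the Schur--Zassenhaus lifting takes place in the latter, while the statement concerns the former. Beyond that bookkeeping there is no obstacle, and the whole lemma can be dispatched in well under a page.
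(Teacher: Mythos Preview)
Your proposal is correct in all four parts. The paper itself does not prove this lemma at all: it simply records the four statements as ``a collection of well-known facts about coprime automorphisms of finite groups'' and refers the reader to Gorenstein \cite{gore}. So there is no paper proof to compare against, and your write-up would in fact supply strictly more detail than the authors chose to include.

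Two small remarks on the execution. First, your proof of (iii) is entirely self-contained and does not actually invoke (ii), contrary to what you say in the plan; only the decomposition $G=[G,\al]C_G(\al)$ in (i) genuinely relies on the centralizer-lifting half of (ii). Second, the identification $I_{G/N}(\al)=\{gN:g\in I_G(\al)\}$ does not require the Schur--Zassenhaus lift either: for the inclusion $\subseteq$, any element of $I_{G/N}(\al)$ is of the form $(hN)^{-1}(hN)^\al=h^{-1}h^\al N$ with $h^{-1}h^\al\in I_G(\al)$ already, so no adjustment of the representative is needed. These are cosmetic points; the mathematics is sound, and the only place where Schur--Zassenhaus is genuinely doing work is in showing $C_{G/N}(\al)\subseteq C_G(\al)N/N$.
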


Throughout, by a simple group we mean a nonabelian simple group. We will often use without special references the well-known corollary of the classification that if a simple group $G$ admits a coprime automorphism $\alpha$ of order $e$, then $G=L(q)$ is a group of Lie type and $\al$ is a field automorphism. Furthermore, $C_G(\al)=L(q_0)$ is a group of the same Lie type defined over a smaller field such that $q=q_0^e$ (see \cite{GLS3}). 

\begin{lemma}\label{0000} Let $r$ be a positive integer and $G$ a simple group admitting a coprime automorphism $\al$ of order $e > 1$. 
\begin{enumerate}
\item If the order of $[P,\al]$ is at most $r$ whenever $P$ is an $\al$-invariant Sylow subgroup of $G$, then the order of $G$ is $r$-bounded.
\item If the rank of $[P,\al]$ is at most $r$ whenever $P$ is an $\al$-invariant Sylow subgroup of $G$, then the rank of $G$ is $r$-bounded.
\end{enumerate}
\end{lemma}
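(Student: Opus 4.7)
My plan is first to apply the classification fact quoted just before the statement: since $G$ is simple and admits a coprime automorphism of order $e>1$, one has $G=L(q)$ of Lie type, $\al$ a field automorphism, $q=q_0^e$, and $C_G(\al)=L(q_0)$. Write $p$ for the defining characteristic, $a=\log_p q_0$, $N$ for the number of positive roots and $\ell$ for the Lie rank. By Lemma \ref{20}(iv), together with coprime conjugacy of $\al$-invariant Sylow $p$-subgroups, I may take the $\al$-invariant Sylow $p$-subgroup $P$ to be the unipotent radical of an $\al$-invariant Borel subgroup of $G$.

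For (1) I would simply compare orders. One has $|P|=q^N$, while $C_P(\al)$ is the unipotent radical of the corresponding Borel in $L(q_0)$, so $|C_P(\al)|=q_0^N$. By Lemma \ref{20}(i), $|I_P(\al)|=[P:C_P(\al)]=q_0^{N(e-1)}$, and since $I_P(\al)\subseteq [P,\al]$ the hypothesis forces $q_0^{N(e-1)}\le r$. Because $e\ge 2$ and $q_0\ge p\ge 2$, this bounds $p$, $q_0$, $N$ and $e$ separately in terms of $r$. In particular the Lie type and the field size $q=q_0^e$ are $r$-bounded, whence so is $|G|$.

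For (2) I would pass to the abelianization of $P$. A standard fact about Chevalley groups, which extends to twisted types via the $\al$-fixed root datum, is that $[P,P]$ contains every root subgroup attached to a non-simple positive root, giving
\[
P/[P,P]\;\cong\;\bigoplus_{i=1}^{\ell}(\mathbf{F}_q,+),
\]
with $\al$ acting componentwise as a field automorphism. On each summand the endomorphism $t\mapsto t^\al-t$ has kernel $\mathbf{F}_{q_0}$ and image of $\mathbf{F}_p$-dimension $a(e-1)$, so the quotient $[P,\al]\,[P,P]/[P,P]=[P/[P,P],\al]$ of $[P,\al]$ is elementary abelian of rank $\ell a(e-1)$. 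The hypothesis therefore forces $\ell a(e-1)\le r$, which bounds $\ell$, $a$ and $e$ in terms of $r$. Since the rank of $L(p^m)$ is bounded by a function of its Lie rank and of $m$ alone (via a chief series of $U$ through root subgroups each of $p$-rank at most $m$, together with the bounded-rank structure of the Weyl group and of a maximal torus), applying this with $m=ae$ bounds the rank of $G$ in terms of $r$.

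The hard part will be verifying these estimates uniformly across all Lie types, in particular for the twisted families ${}^2A_n$, ${}^2D_n$, ${}^3D_4$, ${}^2E_6$, ${}^2B_2$, ${}^2G_2$, ${}^2F_4$, where one must redo the analysis using the $\al$-folded root datum (and where individual summands of $P/[P,P]$ may be parametrised by a subfield or a quadratic extension rather than by $\mathbf{F}_q$ itself). Once this is checked, the inequalities $q_0^{N(e-1)}\le r$ in (1) and $\ell a(e-1)\le r$ in (2) yield the bounds in a uniform way.
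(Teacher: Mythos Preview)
Your argument for (1) is essentially identical to the paper's: both compute $[P:C_P(\al)]=q_0^{N(e-1)}$ (the paper writes $q_0^{d(e-1)}$) and deduce that $p$, $q_0$, $e$ and the Lie data are all $r$-bounded, hence so is $|G|$.

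For (2) your route is correct but genuinely different. You pass to the \emph{quotient} $P/[P,P]$, identify it with $\bigoplus_{i=1}^{\ell}(\mathbf{F}_q,+)$, and read off that the image of $[P,\al]$ there is elementary abelian of rank $\ell a(e-1)$; since a quotient of a rank-$r$ group has rank at most $r$, this bounds $\ell$, $a$, $e$. The paper instead works with a \emph{subgroup}: it exhibits an $\al$-invariant elementary abelian $A\le P$ built as a product of root subgroups, of order $p^{esf(m)}$ with $f(m)$ quadratic in the (untwisted) Lie rank $m$, and observes that $[A,\al]$ has rank at least $(e-1)sf(m)$. Both approaches need the same kind of structural input from \cite{GLS3} and both conclude by bounding Sylow ranks (the paper invokes \cite{gu2,Lu} to say this suffices). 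The subgroup approach has the minor advantage that the description of $A$ as a product of root subgroups is uniform across twisted types, whereas your abelianization $P/[P,P]$ really does change shape in the twisted families (individual summands may sit over $\mathbf{F}_{q^2}$ or $\mathbf{F}_{q^3}$, and in the Suzuki and Ree cases the ``simple root group'' itself is non-abelian, so only its image modulo $[P,P]$ contributes). You correctly flag this as the point requiring care; once checked, your linear lower bound $\ell a(e-1)$ is just as effective as the paper's quadratic one.
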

\begin{proof} We know that $G=L(q_0^e)$ is a group of Lie type and $q_0=p^s$ is a $p$-power for some prime $p$. Moreover $C_G(\al)=L(q_0)$ is a group of the same Lie type. Choose an $\al$-invariant Sylow $p$-subgroup $U$ in $G$ such
that $C_U(\al)$ is a Sylow $p$-subgroup of $C_G(\al)$.   

Comparing the orders of $G$ and $U$ note that $|G| \le |U|^3$ (see \cite{GLS3})  and so for (1),  it suffices to show that $U$ is bounded in terms
of $|[U, \al]|$.   Note that $|U|=q_0^{ed}$ for some $d$ and $|C_U(\al)|= q_0^d$, it follows that $|[U,\al]|\geq q_0^{d(e-1)} > |U|^{1/2}$.  

We now prove (2).   To bound the rank of $G$, it suffices to bound the rank of each Sylow $\ell$-subgroup of $G$ \cite{gu2, Lu}.      If $\ell \ne p$, it
is well known that the rank of a Sylow $\ell$-subgroup of $G$ is at most the (untwisted)  Lie rank of $G$ plus the rank of the Weyl
group \cite[Sec. 4.10]{GLS3}.  

Now consider $U$.  By \cite[3.3.1, Thm. 3.3.3]{GLS3},  there exists an elementary abelian $\al$-invariant subgroup $A$ of $U$ of order $p^{es f(m)}$
where $m$ is the untwisted Lie rank of $G$ and $f(m)$ is some function which grows quadratically in the Lie rank  $m$.  Moreover, this subgroup $A$
is a product of root subgroups if the group is untwisted and a product of abelian subgroups of root subgroups in the twisted case and each
of these subgroups is $\al$-invariant.   
It follows as in the proof of (1), that $[A, \al]$ has rank at least $(e-1)sf(m)$.   Our hypothesis implies that $e, s$ and $m$ are bounded.
Since $|U| \le p^{e s m^2}$, this bounds the rank of $U$ and the Lie rank of $G$.  The result now follows. 
 \end{proof}

\begin{lemma}\label{1111} Let $G$ be a simple group admitting a coprime automorphism $\al$. There is a prime $p\in\pi(G)$ such that $G$ is generated by two $p$-subgroups $P_1$ and $P_2$ with the property that $P_1=[P_1,\al]$ and $P_2=[P_2,\al]$.
\end{lemma}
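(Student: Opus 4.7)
The plan is to invoke the classification-based reduction already cited in the preamble: $G=L(q_0^e)$ is a group of Lie type in characteristic $p$ and $\al$ acts as a field automorphism with $C_G(\al)=L(q_0)$. Take $p$ to be the defining characteristic. Let $B$ be an $\al$-invariant Borel subgroup of $G$ with unipotent radical $U$, and $B^-$ the opposite $\al$-invariant Borel with unipotent radical $U^-$; both exist because one can start with a pair of opposite Borels in $C_G(\al)=L(q_0)$ and lift them via Lang--Steinberg. Set
\[
P_1:=[U,\al], \qquad P_2:=[U^-,\al].
\]
These are $\al$-invariant $p$-subgroups, and they are nontrivial since $C_U(\al)=U(q_0)$ is a proper subgroup of $U$ when $e>1$.

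The property $P_i=[P_i,\al]$ is a direct consequence of the standard coprime-action identity $[H,\al]=[[H,\al],\al]$, valid for any $\al$-invariant $H$ with $(|H|,|\al|)=1$. Indeed Lemma~\ref{20}(i) gives $H=C_H(\al)\cdot [H,\al]$, and writing $h=ck$ with $c\in C_H(\al)$, $k\in [H,\al]$ one computes $h^{-1}h^\al=k^{-1}c^{-1}ck^\al=k^{-1}k^\al$, so every element of $I_H(\al)$ already lies in $I_{[H,\al]}(\al)$. Applying this to $H=U$ and $H=U^-$ gives $P_i=[P_i,\al]$.

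It remains to prove $\langle P_1,P_2\rangle=G$. For each root $r$ of $G$, the root subgroup $X_r$ is $\al$-invariant; when $r$ is positive $X_r\le U$, hence $[X_r,\al]\le P_1$, and similarly $[X_{-r},\al]\le P_2$. Since the rank-one subgroups $L_r=\langle X_r,X_{-r}\rangle$ generate $G$, it suffices to show
\[
L_r=\bigl\langle[X_r,\al],\,[X_{-r},\al]\bigr\rangle
\]
for every $r$. In the typical case $L_r\cong (P)SL_2(q)$, the subgroup $[X_r,\al]$ identifies with $U_{12}(V)$ where $V$ is the image of $x\mapsto x^{q_0}-x$ on the additive group of the ground field---an $\mathbb{F}_{q_0}$-hyperplane. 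Short calculations give $V\cap\mathbb{F}_{q_0}=0$ (since $p\nmid e$ by coprimeness) and $V+\mathbb{F}_{q_0}=\mathbb{F}_{q_0^e}$, so $V$ is contained in no proper subfield. Combined with the observation that $\langle U_{12}(V),U_{21}(V)\rangle$ contains non-trivial $p$-elements from two opposite Borels of $L_r$, Dickson's classification of the subgroups of $(P)SL_2(q)$ then forces this subgroup to coincide with $L_r$.

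The main obstacle is completing the rank-one verification uniformly across all Lie types. Twisted groups produce short-root $L_r$'s of $SU_3$, Suzuki, or Ree type rather than $(P)SL_2$, each of which requires a brief but separate examination of its subgroup lattice; and a handful of small-field cases generate Dickson exceptions that have to be checked directly. The lower bound $|[X_r,\al]|\ge q_0^{e-1}$ provided by coprime action easily exceeds the orders of the relevant sporadic subgroups, however, so the residual cases dispose of themselves.
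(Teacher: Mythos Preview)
Your proposal is correct and follows the same route as the paper: set $P_1=[U,\al]$, $P_2=[U^-,\al]$ for an $\al$-invariant pair of opposite Borels and reduce $\langle P_1,P_2\rangle=G$ to the rank-one subgroups $L_r$---the paper phrases this as an induction on the Lie rank while you handle each simple root directly, and both arguments bottom out in the same maximal-subgroup inspection for the rank-one groups (the paper further reduces $\mathrm{PSU}_3$ and ${}^2\mathrm{G}_2$ to an $\al$-invariant $\mathrm{PSL}_2$-subgroup rather than treating them separately as you propose). One small correction: your claim that $V$ lies in no proper subfield is better justified by the order bound $|V|=q_0^{\,e-1}$, which for odd $e\ge 3$ exceeds the Sylow $p$-order of any proper subfield subgroup of $\mathrm{PSL}_2(q)$, than by $V+\mathbb{F}_{q_0}=\mathbb{F}_q$ alone.
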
 
\begin{proof}  Again, $G$ is a group of Lie type, say over the field of $q$ elements and $\al$ is a field automorphism of odd order $e$ (and so $e \ge 5$ or $e = 3$ and $G = Sz(q)$). Furthermore, $C_G(\al)$ is the group of the same Lie type over the field of $q_0$ elements where $q = q_0^e$. In particular, $\al$ normalizes a Borel subgroup $B = UT$, where $T$ is a torus and $U$ is a Sylow $p$-subgroup, where $q$ is a $p$-power. Then $B^- = U^-T$ is the opposite Borel subgroup (with $U \cap U^-=1$). This is obtained by conjugating $B$ by the longest element in the Weyl group. We claim that $G = \langle [U,\al], [U^-,\al] \rangle$.

First consider the rank 1 groups:  $ \mathrm{PSL}_2(q),  \mathrm{PSU}_3(q),  {^2\mathrm{G}_2(q)}, \mathrm{Sz}(q)$. Note
that $|[U,\al]| \geq (q/q_0)^m$ where $|U|=q^m$ and by inspection of the maximal subgroups
(cf \cite{Wilson}), deduce that the only maximal subgroup containing $[U,\al]$ is $B$ and so the result holds.  

In a similar way we treat the group ${^2}F_4(2^d)$ -- all maximal subgroups are known (\cite[4.9.3]{Wilson}) and none of them contains both $[U,\al]$ and $[U^-,\al]$.

So assume that $G$ has (twisted) Lie rank at least 2. Since the automorphism $\al$ normalizes any root subgroup, each parabolic subgroup $P$ containing $B$ (and similarly for $B^-$) is  $\al$-invariant. Note that $P = QL$ where $Q$ is the unipotent radical and $L$ is the standard Levi subgroup. Note that $[L,L]$ is a central product of quasisimple groups of Lie type of smaller rank and so, by induction on the rank, we have 
$[L,L] \le \langle [U \cap L,\al], [U^- \cap L,\al] \rangle$. Observe that $[L,L]$ is generated by the root subgroups corresponding to the system of a subset of positive roots. So every simple root is contained in some parabolic subgroup. We conclude that in particular the root subgroups $U_{\pm a}$ are contained in $\langle [U,\al], [U^-,\al] \rangle$ for each positive simple root $a$. Since the positive simple root subgroups $U_a$ generate $U$ (and $U_{-a}$ generate $U^-$), we see that $U$ and $U^-$ are contained in $\langle [U,\al], [U^-,\al] \rangle$ and it is well known (see Section 2.9 in \cite{GLS3}) that these generate $G$.
\end{proof}

Throughout, the term ``semisimple group'' means direct product of simple groups. 

\begin{lemma}\label{01} Let $C$ be a positive integer and $G$ a finite group admitting a coprime automorphism $\al$ such that $G=[G,\al]$. Suppose that the order of $[P,\al]$ is at most $C$ whenever $P$ is an $\al$-invariant Sylow subgroup of $G$. Then the order of $G$ is $C$-bounded.
\end{lemma}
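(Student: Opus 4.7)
The plan is to induct on $|G|$. Let $G$ be a minimal counterexample and let $N$ be a minimal $\alpha$-invariant normal subgroup of $G$. The hypotheses descend cleanly to $G/N$: every $\alpha$-invariant Sylow $\bar P$ of $G/N$ is of the form $PN/N$ for an $\alpha$-invariant Sylow $P$ of $G$ (via Lemma \ref{20}), so $|[\bar P,\alpha]|\le|[P,\alpha]|\le C$, and $G/N=[G/N,\alpha]$ follows from $G=[G,\alpha]$. By minimality $|G/N|$ is $C$-bounded, and the task reduces to bounding $|N|$. As a minimal $\alpha$-invariant normal subgroup, $N$ is either elementary abelian of some prime exponent $p$, or a direct product $S_1\times\cdots\times S_k$ of pairwise isomorphic nonabelian finite simple groups, and I treat the two cases separately.

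For the semisimple case, $\langle G,\alpha\rangle$ acts transitively on $\{S_1,\dots,S_k\}$. For any $\alpha$-orbit $\{S_1,\dots,S_\ell\}$ of length $\ell\ge2$ and any $\alpha$-invariant Sylow $p$-subgroup $P$ of $G$, the ``diagonal'' description of $C_V(\alpha)$ for $V=(P\cap S_1)\times\cdots\times(P\cap S_\ell)$ gives $|C_V(\alpha)|\le|P\cap S_1|$, whence $|[V,\alpha]|\ge|P\cap S_1|^{\ell-1}$; since this is at most $|[P,\alpha]|\le C$, both $\ell$ and every Sylow order of $S_1$ are $C$-bounded, forcing $\pi(S_1)\subseteq\{2,\dots,C\}$ and $|S_1|$ $C$-bounded. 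When $\alpha$ fixes every $S_i$ setwise, Lemma \ref{20}(iii) rules out the case of $\alpha$ trivial on all of $N$ (which would give $N\le Z(G)$, contradicting nonabelianness), so some $S_j$ admits a nontrivial coprime $\alpha$-action and Lemma \ref{0000}(1) bounds $|S_j|=|S_1|$. To bound $k$, I observe that $k$ is the product of the number $m$ of $G$-orbits on $\{S_i\}$ and the common orbit size; the latter is at most $|G/N|$ since $N\le N_G(S_i)$, and the former is either $1$ or at most the length of some nontrivial $\alpha$-orbit (hence $C$-bounded by the Sylow estimate above).

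In the elementary abelian case, if $\alpha$ acts trivially on $N$, Lemma \ref{20}(iii) yields $N\le Z(G)$; moreover $G=[G,\alpha]$ forces $[G/G',\alpha]=G/G'$, so by coprime action on the abelian quotient $G/G'$ the complementary summand $C_{G/G'}(\alpha)$ is trivial, giving $N\le G'$. Since $|G/Z(G)|\le|G/N|$ is $C$-bounded, Schur's theorem bounds $|G'|$ and hence $|N|$. If $\alpha$ acts nontrivially on $N$, then $N$ lies in some $\alpha$-invariant Sylow $p$-subgroup $P$, so $|[N,\alpha]|\le|[P,\alpha]|\le C$; the $G$-normal closure of $[N,\alpha]$ is $\alpha$-invariant and normal in $G$, hence equals $N$ by minimality, and since $N$ normalizes $[N,\alpha]$ this closure is the span of at most $[G:N]$ conjugates, yielding $|N|\le C^{|G/N|}$. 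I expect the main obstacle to be the organization of the semisimple case: orchestrating the $\alpha$-orbit/$G$-orbit combinatorics so that either a long $\alpha$-orbit triggers the Sylow estimate or a fixed factor sustains a nontrivial $\alpha$-action, with Lemma \ref{20}(iii) ruling out the degenerate all-trivial possibility.
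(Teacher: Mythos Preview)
Your proof is correct and follows essentially the same strategy as the paper's: reduce to a minimal $\al$-invariant normal subgroup, bound the quotient by induction, then bound the subgroup using the Sylow hypothesis together with a normal-closure argument in the nontrivial-action case and Schur's theorem in the central case. The paper organises the induction on the auxiliary quantity $s(G)=\prod_i|[P_i,\al]|$ and splits according to whether $\al$ acts nontrivially on every $\al$-invariant normal subgroup, whereas you induct on $|G|$ and split according to whether $N$ is semisimple or elementary abelian (and, in the latter case, whether $\al$ is trivial on $N$); your treatment of the semisimple $N$ is more explicit about bounding the number of simple factors via the $G$-orbit/$\al$-orbit interplay, while the paper folds this into the single claim that $[M,\al]$ has $C$-bounded order and then takes its normal closure. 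Both routes rely on the same key estimate $|[V,\al]|\ge|P\cap S_1|^{\ell-1}$ for an $\al$-orbit of length $\ell$, on Lemma~\ref{0000}(1) for a single simple factor, and on Lemma~\ref{20}(iii) to force centrality when $\al$ is trivial.
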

\begin{proof} First,  suppose that $G$ is abelian, in which case  $P=[P,\al]$ and  $p\leq |P|\leq C$. It follows that $|G|\leq C^f$, where $f$ is the number of primes less than or equal to $C$. So we assume that $G$ is nonabelian. If $G$ is simple, then the result is immediate from Lemma \ref{0000}(1). If $G$ is semisimple and $\al$ transitively permutes the simple factors, then any $\al$-invariant Sylow subgroup $Q$ is a product $Q_1\times\dots\times Q_l$, where $\al$ transitively permutes the factors $Q_i$. Observe that $|[Q,\al]|\geq|Q_1|^{l-1}$ and the result follows. So suppose that $G$ has proper $\al$-invariant normal subgroups. Let $\pi(G)=\{p_1,\dots,p_k\}$ and for each $i\leq k$ choose an $\al$-invariant Sylow $p_i$-subgroup $P_i$ in $G$. Let $s(G)$ denote the product $\prod_{1\leq i\leq k}|[P_i,\al]|$. Obviously $s(G)\leq C^k$ and note that $k$ is $C$-bounded. Thus $s(G)$ is $C$-bounded and so we will use induction on $s(G)$. Suppose first that $\al$ acts nontrivially on every $\al$-invariant normal subgroup of $G$. Let $M$ be a minimal $\al$-invariant normal subgroup. By induction the order of $G/M$ is $C$-bounded. The subgroup $M$ is either an elementary abelian $p$-group for some prime $p\leq C$ or a semisimple group. In any case $[M,\al]$ has $C$-bounded order. Since $[M,\al]$ is normal in $M$, which has $C$-bounded index in $G$, we conclude that the normal closure $\langle[M,\al]^G\rangle$ has $C$-bounded order. Because of minimality of $M$ we have $\langle[M,\al]^G\rangle=M$ and so the order of $G$ is $C$-bounded. This completes the proof in the particular case where $\al$ acts nontrivially on every $\al$-invariant normal subgroup of $G$.

Next, suppose that $G$ has nontrivial normal subgroups contained in $C_G(\al)$. Let $N$ be the product of all such subgroups. In view of the above $G/N$ has $C$-bounded order. Since $N\leq Z(G)$, we deduce from Schur's Theorem \cite[Theorem 4.12]{rob} that $G'$ has $C$-bounded order. Hence the result.
\end{proof}

\begin{lemma}\label{02} Let $G=H\langle a\rangle$ be a group with a normal subgroup $H$ and an element $a$ such that $(|H|,|a|)=1$ and $H=[H,a]$. Suppose that $G$ faithfully acts by permutations on a set $\Omega$ in such a way that the element $a$ moves only $m$ points. Then the order of $G$ is $m$-bounded.
\end{lemma}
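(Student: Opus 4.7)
The plan is to bound $|\al|$ and $|H|$ separately via a support analysis on $\Omega$, reducing to the transitive case, which is handled using classical results on primitive permutation groups of small minimal degree. First, since every element of $\langle\al\rangle$ fixes $\Omega\setminus\Delta$ pointwise (where $\Delta$ denotes the $m$-element support of $\al$), the cyclic group $\langle\al\rangle$ embeds into $\mathrm{Sym}(\Delta)$, giving $|\al|\le m!$, so it suffices to bound $|H|$. For each $h\in H$, a direct computation shows that an $\al$-fixed point $x$ is fixed by $h^{-1}h^\al$ unless $x^{h^{-1}}\in\Delta$; equivalently, the support of $h^{-1}h^\al$ is contained in $\Delta\cup\Delta^h$ and has at most $2m$ elements. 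Since $H=[H,\al]$ is generated by the set $I_H(\al)$, the support of every element of $H$ lies in the $H$-closure $\Delta^H$ of $\Delta$, so $H$ fixes $\Omega\setminus\Delta^H$ pointwise. Faithfulness lets me replace $\Omega$ by $\Delta^H$, a union of at most $m$ nontrivial $H$-orbits.

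Next I would induct on $|\Omega|$. An $H$-orbit $O$ not fixed setwise by $\al$ is contained in $\Delta$, so $|O|\le m$. For each $\al$-invariant $H$-orbit $O$ with $|O|<|\Omega|$, the restriction $G|_O=(H|_O)\langle\al|_O\rangle$ is a faithful transitive action inheriting the lemma's hypotheses, and induction bounds $|G|_O|$. The only unresolved case is $H$ transitive on $\Omega$, where the minimal degree of $G$ is at most $2m$ by the support bound. Bochert's theorem on primitive permutation groups then gives either $|\Omega|$ bounded by a function of $m$, or $G\supseteq\mathrm{Alt}(\Omega)$. In the latter case, for $n=|\Omega|\ge 5$, either $H\cap A_n=1$ (so $|A_n|\le |G/H|=|\al|\le m!$, bounding $n$) or $A_n\le H$; in this last situation, coprimality forces $\al$ to induce the trivial automorphism on $A_n$ (since $|\mathrm{Aut}(A_n)|$ has only prime factors at most $n$, whereas all prime factors of $|\al|$ exceed $n$), whence $\al\in C_G(A_n)=1$, contradicting $H=[H,\al]\ne 1$. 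So in the primitive transitive case, $|G|$ is $m$-bounded.

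The main obstacle is the imprimitive transitive case. Given a nontrivial block system $\mathcal{B}$ of $G$, the induced faithful action of $\bar G=G/K$ on $\mathcal{B}$ satisfies the hypotheses with $|\mathcal{B}|<|\Omega|$, so induction on $|\Omega|$ bounds $|\bar G|$. To bound the kernel $K$ I would apply induction on $|G|$ to the subgroup $\tilde G=[K,\al]\langle\al\rangle\le G$, which satisfies the lemma's hypotheses by the coprime-action identity $[K,\al]=[[K,\al],\al]$ and is strictly smaller than $G$ since $K\cap H\subsetneq H$; this bounds $|[K,\al]|$. The residual $\al$-trivial quotient $(K\cap H)/[K\cap H,\al]$ is then controlled via the global constraint $H=[H,\al]$ and the bounded action on $\Delta$, completing the bound on $|G|$ in terms of $m$.
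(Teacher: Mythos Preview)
The gap is in the imprimitive transitive case. After bounding $|G/K|$ (via induction on $|\Omega|$) and $|[K,\al]|$ (via induction on $|G|$), you must still bound $|K\cap H|$, and your final sentence does not do this. First, $[K\cap H,\al]$ need not be normal in $G$; taking its normal closure $N$ in $G$ (which \emph{does} have bounded order, since $[K\cap H,\al]\trianglelefteq K\cap H$ and $[H:K\cap H]$ is bounded) one finds that $(K\cap H)/N$ is centralized by $\al$ and hence, by Lemma~\ref{20}(iii), central in $H/N$. Schur's theorem then bounds the derived subgroup of $H/N$, but what remains is an abelian quotient $A$ of $H$ with $A=[A,\al]$, and neither the hypothesis $H=[H,\al]$ nor the support bound on $\Delta$ gives any control over $|A|$: the faithful action on $\Omega$ does not descend to $A$. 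Your recursion therefore does not close; each pass through the imprimitive case would have to absorb an uncontrolled factor, so no uniform bound $f(m)$ emerges.

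The paper avoids the primitive/imprimitive split entirely. After first invoking Lemma~\ref{01} to reduce to the case that $H$ is a $p$-group, and then reducing to $G$ transitive (at most $m/2$ orbits, since $\al$ must move points on every nontrivial orbit), it linearizes the problem: in the complex permutation module $V=\mathbb{C}\Omega$ one has $\dim[V,a]\le m-1$, so $V$ decomposes into at most $m$ irreducible $G$-summands, and each irreducible summand has $(m,|a|)$-bounded dimension by the Hartley--Isaacs theorem. This bounds $|\Omega|=\dim V$ in one stroke, with no block-system analysis required.
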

\begin{proof} First, note that the order of $a$ is obviously $m$-bounded. Another useful observation is that because of Lemma \ref{01} without loss of generality we can assume that $H$ is a $p$-group for some prime $p$. Let $\Omega_0$ be a nontrivial $G$-orbit. If $a$ moves no points in $\Omega_0$, then taking into account that $H=[H,\al]$ we conclude that also $H$ acts trivially on $\Omega_0$, a contradiction. Therefore, $\al$ moves at least $2$ points on every nontrivial $G$-orbit and so there are at most $m/2$ orbits of $G$ in $\Omega$. Since $G$ embeds into a subdirect product $G_1\times \cdots \times G_r$, with $G_i$ transitive on the $i$th nontrivial $G$-orbit, without loss of generality the action of $G$ on $\Omega$ can be assumed transitive and so it is sufficient to bound the cardinality of $\Omega$. Consider the corresponding permutational representation of $G$ over $\mathbb{C}$. So $G$ naturally acts on the $|\Omega|$-dimensional linear space $V$. The dimension of $[V,a]$ is $m-1$. The space $V$ is a direct sum of irreducible $G$-modules and there are at most $m$ of these (the trivial module and at most $m-1$ nontrivial ones). Each of the irreducible $G$-modules has $(m,|a|)$-bounded dimension by the Hartley-Isaacs Theorem B \cite{HI}. It follows that the dimension of $V$ is $m$-bounded, as required.
\end{proof}

The following lemma will be useful.

\begin{lemma}\label{55} Let $G$ be a group admitting a coprime automorphism $\alpha$ such that $G=[G,\al]$. Suppose that $G=NH$ is a product of an $\al$-invariant normal subgroup $N$ and an $\al$-invariant subgroup $H$. Assume that $[H,\al]$ is generated by $a_1,\dots,a_s$ while $[N,\al]$ is generated by $b_1,\dots,b_t$. Then $G=\langle a_1,\dots,a_s,b_1,\dots,b_t\rangle$.
\end{lemma}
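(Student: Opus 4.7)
The plan is to set $K = \langle a_1,\ldots,a_s,b_1,\ldots,b_t\rangle = \langle [H,\al]\cup [N,\al]\rangle$ and prove $K = G$. Since $K\le G$ is immediate and $G = [G,\al]$, it suffices to show every commutator $[g,\al]$ with $g\in G$ lies in $K$. Writing $g = nh$ with $n\in N$, $h\in H$, a direct computation yields the identity
\[
[nh,\al] \;=\; [n,\al]^h\,[h,\al].
\]
Both $[n,\al]$ and $[h,\al]$ lie in $K$ by hypothesis, so matters come down to showing $[n,\al]^h\in K$ for arbitrary $n\in N$, $h\in H$; equivalently, that $H$ normalizes $K$.

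This is the crux, because $[N,\al]$ need not be normal in $G$. I would use Lemma \ref{20}(i), applied to the $\al$-invariant subgroup $H$, to decompose $H = [H,\al]\,C_H(\al)$, so that every $h\in H$ factors as $h = h_1 c$ with $h_1\in [H,\al]\subseteq K$ and $c\in C_H(\al)$. Conjugation by $h_1\in K$ trivially preserves $K$. For $c$, the relation $c^\al = c$ makes conjugation by $c$ commute with $\al$, giving the formula $[x,\al]^c = [x^c,\al]$ for every $x\in G$. Combined with $H^c = H$ (as $c\in H$) and $N^c = N$ (as $N\lhd G$), this forces $[H,\al]^c\subseteq [H,\al]$ and $[N,\al]^c\subseteq [N,\al]$, whence $K^c\subseteq K$ and in fact $K^c = K$. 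Consequently $K^h = (K^{h_1})^c = K^c = K$ for every $h\in H$, as required.

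Returning to the displayed identity, $[n,\al]^h\in K^h = K$, so $[nh,\al]\in K$ for all $n\in N$ and $h\in H$. Thus $I_G(\al)\subseteq K$, and therefore $G = [G,\al]\subseteq K$, which gives $K = G$.

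The principal obstacle is the possible non-normality of $[N,\al]$ in $G$: without the splitting $H = [H,\al]\,C_H(\al)$ furnished by the coprime action, there is no obvious control over $[N,\al]^h$ for arbitrary $h\in H$. Once this decomposition is in play, the proof reduces to the straightforward observation that $C_H(\al)$-conjugation commutes with the formation of $\al$-commutators.
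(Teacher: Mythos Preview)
Your proof is correct and takes a somewhat different route from the paper's. The paper first observes that $G=N[H,\al]$ and so replaces $H$ by $[H,\al]$ outright; having $H\le K$, it then forms the normal subgroup $\langle [N,\al]^H\rangle\le K$, passes to the quotient, and invokes Lemma~\ref{20}(iii) to see that the image of $N$ is central, whence the image of $H$ is normal and $\al$ acts trivially on the further quotient, forcing $G=\langle [N,\al]^H\rangle H\le K$. Your argument avoids the quotient construction entirely: you keep $H$ as given, prove directly that $H$ normalizes $K$ via the factorization $H=[H,\al]\,C_H(\al)$ together with the identity $[x,\al]^c=[x^c,\al]$ for $c\in C_H(\al)$, and then conclude by the commutator formula $[nh,\al]=[n,\al]^h[h,\al]$ that $I_G(\al)\subseteq K$. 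Your approach is more elementary and self-contained (no appeal to Lemma~\ref{20}(iii) or any quotient argument), while the paper's approach makes the structural reason for the result---that modulo $\langle[N,\al]^H\rangle$ the group becomes central-by-$H$---more transparent. Both hinge on the same underlying fact that $K$ is $H$-invariant; you establish it via $C_H(\al)$ commuting with $\al$-commutator formation, whereas the paper sidesteps it by shrinking $H$ to lie inside $K$.
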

\begin{proof} Since $G=N[H,\al]$, without loss of generality we can assume that $H=[H,\al]$. Thus, $H=\langle a_1,\ldots,a_s\rangle$. Hence the subgroup $\langle [N,\al]^H\rangle$ is  contained in $\langle a_1,\dots,a_s,b_1,\dots,b_t\rangle$. By Lemma \ref{20}(iii) the image of $N$ in the quotient group $G/\langle [N,\al]^H\rangle$ becomes central and therefore the image of $H$ becomes normal. Hence, $\langle [N,\al]^H\rangle H$ is normal in $G$. Obviously, $\al$ acts trivially on $G/\langle [N,\al]^H\rangle H$. Since $G=[G,\al]$, we conclude that $G=\langle [N,\al]^H\rangle H$ and the result follows. 
\end{proof}

In the sequel we will require the following well-known fact (see \cite[p.\ 271]{rose}).

\begin{lemma}\label{03} Let $N$ be a normal subgroup of a finite group $G$. Let $H$ be a minimal subgroup of $G$ such that $G=NH$. Then $H\cap N\leq\Phi(H)$.
\end{lemma}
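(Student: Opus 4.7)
The plan is to argue by contradiction, exploiting the characterization of $\Phi(H)$ as the intersection of all maximal subgroups of $H$. Suppose that $H\cap N\not\leq\Phi(H)$. Then there exists a maximal subgroup $M$ of $H$ that fails to contain $H\cap N$.

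The first step is to combine $M$ with $H\cap N$ inside $H$. Since $M$ is maximal in $H$ and $\langle M, H\cap N\rangle$ is a subgroup of $H$ strictly containing $M$ (as it contains some element of $H\cap N$ not in $M$), maximality forces
\[
\langle M, H\cap N\rangle = H.
\]

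The second step is to transfer this to $G$ using the normality of $N$. Because $H\cap N\subseteq N$, the previous equality gives $H\subseteq NM$. Since $N\trianglelefteq G$, the product $NM=MN$ is a genuine subgroup of $G$, and so
\[
G=NH\subseteq N\cdot NM=NM\subseteq G,
\]
whence $G=NM$. But $M$ is a proper subgroup of $H$, which contradicts the minimality assumption on $H$ among $\alpha$-irrelevant subgroups with $G=NH$.

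There is no real obstacle in this argument; it is purely formal and uses only the definition of $\Phi(H)$ together with the fact that $N\trianglelefteq G$ makes $NM$ a subgroup. The only point to be careful about is recording that $\langle M,H\cap N\rangle$ is literally all of $H$ (not merely a subgroup properly containing $M$), which is automatic from maximality of $M$.
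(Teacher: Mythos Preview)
Your argument is correct and is precisely the standard proof of this well-known fact. The paper itself does not give a proof at all: it merely records the statement as ``well known'' and cites Rose's textbook, so there is nothing substantive to compare against.

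One small slip: in your final sentence you speak of ``the minimality assumption on $H$ among $\alpha$-irrelevant subgroups''. The lemma has no automorphism $\alpha$ in it; you simply mean the minimality of $H$ among subgroups satisfying $G=NH$. Also, although not needed for your argument (since you correctly observe that $NM$ is a subgroup containing both $M$ and $H\cap N$, hence containing $\langle M,H\cap N\rangle=H$), it may be worth noting that $H\cap N$ is in fact normal in $H$, so one could alternatively write $H=M(H\cap N)$ directly.
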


\section{Theorems \ref{main2} and \ref{main3}}

In this section Theorem \ref{main2} and Theorem \ref{main3} will be proved. Naturally, the proof relies on the classification of simple groups. We will also extensively use standard facts about conjugacy classes and characters of $\mathrm{PGL}_2(q)$ (see \cite{Do}).  We first prove  results about $\mathrm{PSL}_2(q)$. 

\begin{lemma} \label{psl} Let $e\ge5$ be an odd positive integer, $q_0$ an odd prime power, and let $q=q_0^e$.  Let $\al$ be a field automorphism of $H:=\mathrm{PSL}_2(q)$ of order $e$ (necessarily with $C_H(\al)=\mathrm{PSL}_2(q_0))$. Let $C=\al^H$. Then $CC^{-1}=H$.
\end{lemma}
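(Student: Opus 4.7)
I would prove $CC^{-1}=H$ via the Frobenius class-multiplication formula, applied in the semidirect product $\tilde G := \mathrm{PGL}_2(q) \rtimes \langle\alpha\rangle$ where $\alpha$ is extended to $\mathrm{PGL}_2(q)$ (its fixed-point subgroup being $\mathrm{PGL}_2(q_0)$). The choice of this ambient group is crucial: the character theory of $\mathrm{PGL}_2(q)$ is uniform and, unlike $\mathrm{PSL}_2(q)$, has no exceptional half-degree characters, so character values on nontrivial elements are uniformly small.

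A direct computation gives $CC^{-1}\subseteq H$: writing $c_i = h_i^{-1}\alpha h_i$, one has $c_1 c_2^{-1}=h_1^{-1}\alpha(h_1h_2^{-1})h_2 \in H$. Moreover $C = \alpha^H = \alpha^{\tilde G}$ is a single $\tilde G$-conjugacy class, since $|C_{\tilde G}(\alpha)|=e\,|\mathrm{PGL}_2(q_0)|$ forces $|\alpha^{\tilde G}|=|H|/|\mathrm{PSL}_2(q_0)|=|\alpha^H|$. For any $g \in H$, the Frobenius formula then reads
\[
N(g)\;:=\;\bigl|\{(c_1,c_2)\in C\times C : c_1c_2^{-1}=g\}\bigr|\;=\;\frac{|C|^2}{|\tilde G|}\sum_{\chi\in \mathrm{Irr}(\tilde G)}\frac{|\chi(\alpha)|^2\,\overline{\chi(g)}}{\chi(1)},
\]
so it suffices to show this character sum is strictly positive.

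By Clifford theory, since $\gcd(e,|\mathrm{PGL}_2(q)|)=1$, only characters of $\tilde G$ extending an $\alpha$-invariant irreducible character $\psi$ of $\mathrm{PGL}_2(q)$ satisfy $\chi(\alpha)\neq 0$, and each such $\psi$ has exactly $e$ extensions. By Shintani descent (or equivalently by direct inspection of the character tables in \cite{Do}), these $\alpha$-invariant characters are in bijection with irreducibles $\psi_0$ of $\mathrm{PGL}_2(q_0)$, with $|\chi(\alpha)|=\psi_0(1)\le q_0+1$ on each extension. The main term of the sum comes from the two linear characters of $\mathrm{PGL}_2(q)$, namely the trivial character and the $\mathrm{PGL}_2/\mathrm{PSL}_2$ sign character: both restrict trivially to $H$ and each has $e$ extensions contributing $+1$, so they contribute $+2e$ in total. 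For the Steinberg and twisted Steinberg characters one has $|\chi(g)|\le 1$ on nontrivial $g$, while the principal and discrete series satisfy $|\chi(g)|\le 2$ and have degrees $q\pm 1$; combined with $|\chi(\alpha)|\le q_0+1$, the remaining contributions sum to $O(eq_0^3/q)$ in absolute value. Since $q=q_0^e$ with $e\ge 5$ and $q_0 \ge 3$, this error is strictly smaller than $2e$, so the character sum is positive, $N(g)>0$, and $CC^{-1}=H$.

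The main obstacle is the bookkeeping: invoking Shintani descent to compute $|\chi(\alpha)|$ for each $\alpha$-invariant character of $\mathrm{PGL}_2(q)$ and carefully aggregating the error contributions from the four families of $\mathrm{PGL}_2(q)$-irreducibles. These steps are routine with the explicit character table at hand but need some care, in particular because one must verify uniformly that the $\psi_0$ values are bounded by $q_0+1$ and that the sign character indeed gives a genuine second copy of the main term.
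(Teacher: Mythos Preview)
Your proposal is correct and follows essentially the same route as the paper: work in $\mathrm{PGL}_2(q)\langle\alpha\rangle$, apply the Frobenius class-multiplication formula, observe that only $\alpha$-invariant characters of $\mathrm{PGL}_2(q)$ contribute, use the uniform bound $|\chi(y)|\le 2$ on $H$, bound $|\chi(\alpha)|\le q_0+1$, and compare the main term from the two linear characters against the error from the remaining $q_0$ invariant characters. The one substantive difference is how $|\chi(\alpha)|$ is bounded: you invoke Shintani descent to identify $|\chi(\alpha)|$ with the degree $\psi_0(1)$ of a character of $\mathrm{PGL}_2(q_0)$, whereas the paper argues more elementarily by restricting to an $\alpha$-invariant Sylow $p$-subgroup $U$ and counting how many nontrivial characters of $U$ are fixed by $\alpha$ (namely $q_0-1$), which already forces $|\chi(\alpha)|\le q_0+1$. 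The paper's argument is self-contained and avoids importing Shintani descent; yours gives a sharper identification but at the cost of a heavier reference.
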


\begin{proof}  For convenience it is easier to work with $J=\mathrm{PGL}_2(q)$. Since
$J=HC_J(\al)$, this suffices (i.e. $\al^J=\al^H$). Set $M=J\langle\al\rangle$. 

Note that $J$ has $q+1$ conjugacy classes of order prime to $q$ and $1$ conjugacy class of elements whose order is not relatively prime to $q$. So $J$ has $q+2$ nontrivial
irreducible characters of dimensions $q-1, q,$ and $q+1$. Note that $\al$ leaves precisely
$q_0 + 2$ conjugacy classes invariant and so by Brauer's permutation lemma, the same
is true for irreducible characters.  

By the class equation formula, the lemma is equivalent to saying that
$$
\sum \phi(\al)\phi(\al^{-1})\phi(y)/\phi(1) \ne 0, \ \ \ \ \ \ \ \ \ \ \ \  \ \ (*)
$$
for every $y \in H$. Here the sum is taken over the irreducible characters $\phi$ of $M$.

Note that if $\phi$ is an irreducible character of $M$ and is not irreducible when restricted to $J$, then $\phi(\al)=0$.
So it suffices to consider the $\al$-invariant characters of $J$.  Each has precisely $e$ distinct extensions to $M$, which are the same up
to a twist by a linear character of $M/J$ and so give the same value in the sum above -- thus, to show that the class sum is nonzero, it suffices to pick one extension of each invariant character to $\langle J, \al \rangle$.   

By inspection of the character table for $\mathrm{PGL}_2(q)$, it follows that if $\phi$ is an irreducible character, $|\phi(y)| \le 2$ for each $y\in H$. If $\phi$ is $\al$-invariant, we claim that $|\phi(\al)| \le q_0 + 1$.

To see this, note that if $U$ is a Sylow $p$-subgroup of order $q$, then every nontrivial
character of $U$ occurs once and the trivial character with multiplicity at most $2$.
Thus, $\al$ permutes the nontrivial characters of $U$ fixing exactly $q_0 -1$ of them and so
the claim holds. 

If $\phi$ is not irreducible over $J$ (equivalently not $\al$-invariant), then $\phi(\al)=0$ and so does not contribute to the sum in $(*)$. If $\phi(1)=1$, then $\phi(y) =1$ as well (since $y\in H = [J,J]$) and so the contribution is $1$ for each. There are two such characters.  
 
There are $q_0$ irreducible characters not of degree $1$ that are $\al$-invariant. For any such character $\phi$, we have 
  $$ |\phi(\al) \phi(\al^{-1}) \phi(y)/\phi(1)| \le 2 (q_0+ 1)^2/(q-1). $$
 
Since there are $q_0$ such characters, the absolute value of the sum of these is at 
most $2q_0(q_0+1)^2/(q-1) < 2$. The last inequality follows since $q \ge q_0^5$.  This implies the claim.  
\end{proof}  
 
 Essentially the same proof yields:
 
\begin{lemma} \label{psleven}  Let $e\ge4$ be a positive integer, $q_0$ a power of $2$, and $q=q_0^e$. Let $\al$ be a field automorphism of $H:= \mathrm{PSL}_2(q)$ of order $e$ (so with centralizer $\mathrm{PSL}_2(q_0)$). Let $C=\al^H$. Then $CC^{-1}=H$.
\end{lemma}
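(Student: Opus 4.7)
My plan is to follow the character-theoretic argument of Lemma \ref{psl} with the adjustments appropriate to even characteristic. A convenient simplification at the outset is that for $q$ a power of $2$ the groups $\mathrm{PSL}_2(q)$, $\mathrm{SL}_2(q)$ and $\mathrm{PGL}_2(q)$ all coincide, so there is no need to pass to a $\mathrm{PGL}_2$-overgroup before starting; I would set $M=H\langle\al\rangle$ and work directly with $H$. I would then record the character data: $H$ has $q+1$ irreducible characters, namely the trivial character, the Steinberg character of degree $q$, the $(q-2)/2$ principal series characters of degree $q+1$, and the $q/2$ discrete series characters of degree $q-1$. Since $C_H(\al)=\mathrm{PSL}_2(q_0)$ has $q_0+1$ conjugacy classes, Brauer's permutation lemma gives exactly $q_0+1$ $\al$-invariant irreducible characters, split as $1+1+(q_0-2)/2+q_0/2$.

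By the Frobenius class equation, the conclusion $CC^{-1}=H$ reduces to showing
$$\sum_{\phi} \frac{\phi(\al)\phi(\al^{-1})\phi(y)}{\phi(1)} \neq 0$$
for every $y\in H$, with $\phi$ ranging over $\mathrm{Irr}(M)$. As in Lemma \ref{psl}, characters whose restriction to $H$ is not $\al$-invariant vanish on $\al$, and the $e$ extensions of each $\al$-invariant character of $H$ to $M$ differ only by twists by linear characters of $M/H$ and contribute equally; so it suffices to choose one extension per invariant character. The uniform bounds $|\phi(y)|\le 2$ for $y\in H$ (by inspection of the character table) and $|\phi(\al)|\le q_0+1$ (from the action of $\al$ on the nontrivial characters of a Sylow $2$-subgroup of order $q$, fixing only $q_0-1$ of them) transfer verbatim from the proof of Lemma \ref{psl}.

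The one point where the tally differs is in the leading term: because $\mathrm{PSL}_2(q)$ is perfect, the trivial character is the only linear one and contributes $+1$ to the class sum, rather than the $+2$ supplied by the two linear characters of $\mathrm{PGL}_2(q)$ in Lemma \ref{psl}. The remaining $q_0$ non-linear $\al$-invariant characters have degree at least $q-1$, so their total absolute contribution is bounded by $2q_0(q_0+1)^2/(q-1)$. The main obstacle I anticipate is ensuring this quantity is strictly less than $1$ across the full parameter range; the inequality is comfortable once $q_0^{e-1}>2(q_0+1)^2$, which covers all configurations with $q_0\ge 4$ and $e\ge 4$, but the smallest configurations with $q_0=2$ leave a narrower margin and may require sharpening the uniform bound $|\phi(\al)|\le q_0+1$ into a character-by-character computation using the explicit character table of $\mathrm{PSL}_2(2^e)$ to close out the argument.
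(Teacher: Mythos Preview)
Your proposal is correct and follows essentially the same route as the paper: the paper also exploits $\mathrm{PSL}_2(q)=\mathrm{PGL}_2(q)$, counts $q_0+1$ $\al$-invariant characters with a single linear one contributing $+1$, and bounds the rest by $2q_0(q_0+1)^2/(q-1)$. The paper pins down the exceptional cases precisely as $q_0=2$ with $e\in\{4,5\}$ and disposes of them by computing the class sum directly, which is exactly the fallback you anticipated.
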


\begin{proof} Note that since $q$ is even, $\mathrm{PSL}_2(q)= \mathrm{PGL}_2(q)$. In this case, there are $q$ conjugacy classes of elements of odd order and $1$ conjugacy class of involutions for a total of $q+1$ conjugacy classes. As in the previous result, we see that there are $q_0+1$ $\al$-invariant irreducible characters of $H$ of possible dimensions $q -1, q$ and $q+1$. The trivial character of $H$ is the only linear character. The estimates for the character values are the same as in the previous lemma and so each nontrivial character contributes at most $2(q_0+1)^2/(q-1)$. Since $e\ge4$ and there are $q_0$ characters to account for, the
absolute value of this sum is at most $2q_0(q_0+1)^2/(q-1) < 1$ (since $e\ge 4)$  unless possibly $q_0=2$ and $e \le 5$. In those two cases, one just computes the class sum directly to obtain the result. 
\end{proof} 

We are ready to prove Theorem \ref{main2}. For the reader's convenience we restate it here. 

\begin{theorem} \label{o(g)} Let $G$ be a finite group admitting a coprime automorphism $\al$ such that $g^{-1}g^\al$ has odd order for every $g\in G$. Then $[G,\al]\leq O(G)$.
\end{theorem}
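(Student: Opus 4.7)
The plan is to argue by induction on $|G|$ and reduce to a minimal counterexample. First, replacing $G$ by $[G,\alpha]$ (which preserves the hypothesis and satisfies $[[G,\alpha],\alpha]=[G,\alpha]$ for coprime actions), I may assume $G=[G,\alpha]$. Next, since the hypothesis descends to $\alpha$-invariant quotients by Lemma~\ref{20}(ii) and $O(G/O(G))=1$, I may also assume $O(G)=1$; the aim is then to contradict $G\neq 1$. Let $M$ be a minimal $\alpha$-invariant normal subgroup of $G$.

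If $M$ is abelian, it is elementary abelian of exponent $p$, and $O(G)=1$ forces $p=2$. For each $m\in M$, the element $m^{-1}m^\alpha$ is simultaneously of $2$-power order (lying in $M$) and of odd order (lying in $I_G(\alpha)$), hence trivial. Thus $M\leq C_G(\alpha)$, and Lemma~\ref{20}(iii) gives $M\leq Z(G)$. Induction applied to $G/M$ yields $[G/M,\alpha]\leq O(G/M)$; combined with $G=[G,\alpha]$ this forces $|G/M|$ to be odd. Schur--Zassenhaus then splits $G=M\times K$ with $K\trianglelefteq G$ of odd order, so $K\leq O(G)=1$, giving $G=M\leq Z(G)$ and $[G,\alpha]=1$, contradicting $G\neq 1$.

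If $M$ is nonabelian, write $M=S_1\times\cdots\times S_k$ with the $S_i$ isomorphic nonabelian simple groups. Should $\alpha$ map some $S_i$ to a distinct factor $S_j$, pick an involution $t\in S_i$ (available by the Feit--Thompson theorem); then $g=(t,1,\ldots,1)\in M$ satisfies $g^{-1}g^\alpha\in I_G(\alpha)$ with components $t$ in slot $i$ and $t^\alpha$ in slot $j$, hence is an involution, contradicting the hypothesis. Otherwise $\alpha$ stabilizes each $S_i$; were the action of $\alpha$ trivial on every $S_i$, then $\alpha$ would centralize $M$ and Lemma~\ref{20}(iii) would give $M\leq Z(G)$, contradicting nonabelianness. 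So $\alpha$ acts nontrivially on some $S_i$, reducing the problem to producing an involution in $I_S(\alpha)$ for a nonabelian simple group $S$ admitting a nontrivial coprime automorphism.

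This simple case is the main obstacle. Such an $S$ is of Lie type $L(q)$ with $\alpha$ a field automorphism of order $e$, and coprimeness forces $e\geq 5$ odd or the exceptional case $e=3$ with $S=\mathrm{Sz}(q)$. In the generic case, choose a long root $a$ of $L$ and set $H=\langle U_a,U_{-a}\rangle\cong\mathrm{PSL}_2(q)$, an $\alpha$-invariant subgroup on which $\alpha$ acts as a field automorphism of order $e$. Lemma~\ref{psl} (if $q$ is odd) or Lemma~\ref{psleven} (if $q$ is even) gives $CC^{-1}=H$ with $C=\alpha^H$, and the identity
\[
(x^{-1}\alpha x)(y^{-1}\alpha^{-1}y)\;=\;y^{-1}\bigl(h^{-1}h^{\alpha^{-1}}\bigr)y,\qquad h=xy^{-1},
\]
exhibits each element of $CC^{-1}=H$ as an $H$-conjugate of some element of $I_H(\alpha^{-1})$. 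Taking an involution $t\in H$, we obtain an involution in $I_H(\alpha^{-1})=I_H(\alpha)^{-1}$; being self-inverse, this element lies in $I_H(\alpha)\subseteq I_G(\alpha)$, the desired contradiction. The residual Suzuki case $e=3$, $S=\mathrm{Sz}(q)$, where no $\mathrm{PSL}_2(q)$-subsystem subgroup is available, must be handled separately---presumably via a direct analysis of the rank-one structure of $\mathrm{Sz}(q)$ and an analogous character-theoretic class-sum argument exploiting its unique involution class---and this is where the bulk of the residual technical work is concentrated.
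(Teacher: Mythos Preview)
Your overall strategy matches the paper's almost exactly: reduce to a minimal counterexample with $G=[G,\alpha]$ and $O(G)=1$, dispose of the abelian minimal normal case via centrality, reduce the semisimple case to a single simple factor, and then find an involution in $I_S(\alpha)$ by passing to an $\alpha$-invariant $\mathrm{PSL}_2(q)$ and invoking Lemmas~\ref{psl}/\ref{psleven}. Your identity relating $CC^{-1}$ to conjugates of $I_H(\alpha^{-1})$ is correct, and the passage from $I_H(\alpha^{-1})$ to $I_H(\alpha)$ for an involution is fine.

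The genuine gap is the Suzuki case, and here you have mislocated the difficulty. You anticipate ``the bulk of the residual technical work'' to be a bespoke class-sum computation for $\mathrm{Sz}(q)$, but the paper dispatches this case in one line by a route you have overlooked: $\mathrm{Sz}(q)$ is defined in characteristic~$2$, and for \emph{any} group of Lie type in characteristic~$2$ a field automorphism normalizes a Sylow $2$-subgroup $U$ without centralizing it (indeed $C_U(\alpha)$ is the Sylow $2$-subgroup of $L(q_0)$, a proper subgroup of $U$). Hence there is some $u\in U$ with $1\ne u^{-1}u^{\alpha}\in U$, an element of $I_G(\alpha)$ of $2$-power order---an immediate contradiction. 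This observation handles all characteristic-$2$ groups at once and renders Lemma~\ref{psleven} unnecessary for this theorem; after it, one is in odd characteristic, where coprimeness forces every prime divisor of $e$ to be at least~$5$, and the $\mathrm{PSL}_2(q)$ reduction (via Levi subgroups of minimal parabolics, which is a bit cleaner than your long-root formulation for twisted types such as $\mathrm{PSU}_3$ and ${}^2\mathrm{G}_2$) goes through with Lemma~\ref{psl} alone.

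Note also that your long-root construction $\langle U_a,U_{-a}\rangle\cong\mathrm{PSL}_2(q)$ fails for $\mathrm{Sz}(q)$ regardless of $e$ (it has no $\mathrm{PSL}_2$ subgroup), so the residual case is really ``$S=\mathrm{Sz}(q)$'' rather than ``$e=3$''---but the characteristic-$2$ argument above covers it uniformly.
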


\begin{proof} Assume that this is false and let $G$ be a counterexample of minimal order. Then $G=[G,\al]$ and $O(G)=1$. Let $M$ be a minimal $\al$-invariant normal subgroup of $G$. Since $G/M$ satisfies the hypothesis, by   induction $G/M$ has odd order. The subgroup $M$ is either elementary abelian or semisimple.

If $M$ is abelian, then $M$ is a 2-subgroup and so by hypotheses $M\leq C_G(\al)$. It follows from Lemma \ref{20}(iii) that $M\leq Z(G)$, which leads to a contradiction since $G/M$ has odd order and $O(G)=1$.

Hence, we can assume that $M$ is a direct product of isomorphic nonabelian simple groups and $\al$ transitively permutes the simple factors. Moreover because of minimality $G=M$. If $M$ is a product of more than one simple group, and if $S$ is a simple factor in which $g$ is an involution, observe that $g^{-1}g^\al$ has order two, a contradiction. 

So we are reduced to the case that $G$ is simple. It follows that $G$ is a group of Lie type and $\al$ is a field automorphism, say of coprime order $e$. 

If $G$ is a group of Lie type in characteristic $2$, then field automorphisms do not
centralize Sylow $2$-subgroups (but do normalize one) and so we have a contradiction again.

So $G$ is a group of Lie type in odd characteristic $p$, defined over a field of size
$q=q_0^e$ with all divisors of $e$ at least $5$ (the only case where there is a coprime
automorphism of order $3$ is for the Suzuki groups which are in characteristic $2$).

First suppose that $G$ has (twisted) Lie rank $1$. If $G=\mathrm{PSL}_2(q)$ we apply Lemma \ref{psl}.
If $G=\mathrm{PSU}_3(q)$ or $^{2}\mathrm{G}_2(q)$, we observe that there is an $\al$-invariant subgroup isomorphic to $\mathrm{PSL}_2(q)$ which is not centralized by $\al$ and so again the lemma applies.  

So we may assume that $G$ has rank at least $2$.  Note that $\al$ normalizes a Borel subgroup and, by the structure of field automorphisms, $\al$ normalizes each parabolic subgroup of $G$ containing $B$. We see that $\al$ normalizes a Levi subgroup $L$ (and so its derived subgroup). By choosing the parabolic subgroup to be minimal properly containing the Borel subgroup, we can assume that $L$ is of rank $1$. There may be a center but since the rank $1$ case reduces to $\mathrm{PSL}_2(q)$, the center will be a $2$-group. The result follows.  
\end{proof}  

We will now show that the coprimeness assumption is really necessary in Theorem \ref{main2}. First, quote a linear algebra result from \cite{gu1}.

\begin{theorem}\label{gu1} Let $k$ be an algebraically closed field and let $A, B \in M_n(k)$ be matrices such that $AB-BA$ has rank $1$.  Then $A$ and $B$ can be simultaneously triangularized.
\end{theorem}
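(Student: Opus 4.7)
The plan is to argue by induction on $n$. The case $n = 1$ is trivial. For the inductive step it suffices to produce a common nonzero proper $\{A, B\}$-invariant subspace $W \subsetneq V := k^n$: the commutator $[A|_W, B|_W]$ on $W$ and the induced commutator on the quotient $V/W$ both still have rank at most $1$, since the rank of a linear map can only drop under restriction to an invariant subspace or passage to a quotient. Two applications of the inductive hypothesis then simultaneously triangularize $(A, B)$ on $W$ and on $V/W$, and the two flags concatenate into the desired flag on $V$.

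To build $W$, write $[A, B] = u v^T$ with $u, v \ne 0$ (the commuting case $[A,B]=0$ is classical). Since $\operatorname{tr}[A, B] = 0$ one has $v^T u = 0$, and hence $[A, B]^2 = u (v^T u) v^T = 0$. Let $\mathcal{A}$ denote the unital subalgebra of $M_n(k)$ generated by $A$ and $B$, and set $W := \mathcal{A} u$. This subspace is automatically $\{A, B\}$-invariant and nonzero, and it is proper in $V$ if and only if $v^T X u = \operatorname{tr}(X [A, B]) = 0$ for every $X \in \mathcal{A}$.

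The technical heart of the argument is therefore to verify that $\operatorname{tr}(p(A, B) [A, B]) = 0$ for every non-commutative monomial $p$ in $A$ and $B$. Note that this is exactly McCoy's criterion for simultaneous triangularizability applied to the rank-one situation, since $p(A, B) [A, B]$ has rank at most one and so is nilpotent if and only if its trace vanishes. For monomials in $A$ alone (or $B$ alone) trace cyclicity disposes of the claim on the spot: $\operatorname{tr}(A^k [A, B]) = \operatorname{tr}(A^{k+1} B) - \operatorname{tr}(A^k B A) = 0$. For mixed monomials I would induct on length, using the identity $\operatorname{tr}(X [A, B]) = \operatorname{tr}([X, A] B)$ together with the expansion of $[X, A]$ as a sum of strictly shorter monomials, each carrying a single explicit factor of $[A, B] = u v^T$; the rank-one factorization then rewrites each resulting trace as a product of two scalars of the same shape $v^T(\text{shorter word})u$ but of strictly smaller length, which vanish by the inductive hypothesis.

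The principal obstacle is exactly the combinatorics of this recursion: one has to choose the induction parameter (say, total degree of $p$, or the number of $B$'s appearing in the word) carefully enough that the recursion strictly decreases, and track signs and cyclic shifts with care. In characteristic different from $2$ the symmetric identity $\operatorname{tr}([X, A] B) = -\operatorname{tr}([X, B] A)$ shortcuts much of the case analysis by yielding $2 \operatorname{tr}(X [A, B]) = 0$ on large classes of words; in characteristic $2$ the full recursion seems to be genuinely needed. Once the trace identity is established for all $X \in \mathcal{A}$, the subspace $W = \mathcal{A} u$ is the sought proper $\{A, B\}$-invariant subspace, and the outer induction on $n$ closes.
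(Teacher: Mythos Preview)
The paper does not actually prove this statement; it is quoted from \cite{gu1} and used as a black box. So there is no ``paper's own proof'' to compare with. That said, your sketch has a genuine gap at its technical heart.

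You claim that expanding $[X,A]$ by the Leibniz rule and inserting $[A,B]=uv^T$ rewrites $\operatorname{tr}(X[A,B])$ as a sum of terms $v^T(\text{shorter word})u$. But the words are \emph{not} shorter. If $X=C_1\cdots C_d$ and $C_i=B$, the corresponding summand is
\[
\operatorname{tr}\bigl(C_1\cdots C_{i-1}\,uv^T\,C_{i+1}\cdots C_d\,B\bigr)
= v^T\,C_{i+1}\cdots C_d\,B\,C_1\cdots C_{i-1}\,u,
\]
and the inner word $C_{i+1}\cdots C_d\,B\,C_1\cdots C_{i-1}$ has length $(d-i)+1+(i-1)=d$, with exactly the same number of $A$'s and $B$'s as $X$. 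So neither total degree, nor $A$-count, nor $B$-count decreases, and your induction does not terminate. What you have produced is a homogeneous linear system among the quantities $\{v^TYu: |Y|=d\}$, not a reduction to shorter words; showing this system forces all of them to vanish is a separate argument you have not supplied. Your characteristic-$\ne 2$ shortcut suffers the same defect: the identity $\operatorname{tr}([X,A]B)=-\operatorname{tr}([X,B]A)$ only permutes words of the same length.

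For comparison, the argument in \cite{gu1} avoids McCoy's criterion entirely and finds a common invariant subspace directly. After translating so that $A$ is singular, one checks two cases: if $[A,B]$ kills $\ker A$ then $B(\ker A)\subseteq\ker A$; otherwise $\operatorname{im}[A,B]=AB(\ker A)\subseteq\operatorname{im}A$, and then $B(\operatorname{im}A)\subseteq\operatorname{im}A$ since $BAw=ABw-[A,B]w\in\operatorname{im}A$. Either way one obtains a nonzero proper common invariant subspace, and the outer induction on $n$ (which you set up correctly) closes immediately. This is both shorter and characteristic-free.
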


This implies that if $F$ is any field and $x,y\in \mathrm{GL}_n(F)$ such that the group commutator $[x,y]$ is a transvection, then $\langle x,y\rangle$ has unipotent derived group, and in particular $\langle x,y\rangle$ is soluble.

To see this, we can assume that $F$ is algebraically closed and write $x^{-1}y^{-1}xy = I + A$, where $A$ is a nilpotent rank one matrix. Thus,  $xy -yx = yxA$ has rank $1$ and so $x, y$ are simultaneously triangular, whence the commutator subgroup $\langle x,y\rangle'$ is unipotent.

\begin{corollary}\label{gugu} Let $x\in GL(V)$ act on $V$ irreducibly. Then for any $y\in GL(V)$ the commutator $[x,y]$ is not a transvection.
\end{corollary}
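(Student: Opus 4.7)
My plan is to argue by contradiction. Suppose that $[x,y]$ is a (nontrivial) transvection. By Theorem~\ref{gu1}, in the form of the remark immediately following its statement, the derived subgroup $N:=\langle x,y\rangle'$ consists entirely of unipotent matrices. Although that argument passes to the algebraic closure of $F$ in order to simultaneously triangularize $x$ and $y$, unipotency of a matrix is unchanged by field extension and descent, so $N$ is unipotent over $F$ as well.

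I would then invoke the classical theorem of Kolchin: every subgroup of $GL(V)$ consisting of unipotent matrices has a common nonzero fixed vector. Applied to $N$, this shows that the fixed-point subspace
\[
V^N:=\{v\in V : n(v)=v\ \text{for all}\ n\in N\}
\]
is a nonzero $F$-subspace of $V$; its $F$-rationality is automatic since $V^N$ is cut out by the $F$-linear conditions $(n-I)v=0$. Because $N$ is normal in $\langle x,y\rangle$, the subspace $V^N$ is invariant under the full group $\langle x,y\rangle$, and in particular under $x$.

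The irreducibility of the action of $x$ on $V$ then forces $V^N=V$, so $N$ acts trivially on $V$ and $N=1$. But this gives $[x,y]=I$, contradicting the assumption that $[x,y]$ was a nontrivial transvection. The only point requiring care is ensuring that the fixed vector produced by Kolchin lies in $V$ itself and not merely in $V\otimes\overline F$; this is precisely what the $F$-rationality of $V^N$ guarantees, so no genuine obstacle arises.
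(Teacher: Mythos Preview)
Your proof is correct and follows essentially the same route as the paper's: assume $[x,y]$ is a transvection, use Theorem~\ref{gu1} to get that $\langle x,y\rangle'$ is unipotent, note that its fixed-point space is nonzero and invariant under $x$ (equivalently under $\langle x,y\rangle$), and derive a contradiction from the irreducibility of $x$. The only differences are cosmetic---you name Kolchin's theorem explicitly and add the (unnecessary but harmless) remark about $F$-rationality, whereas the paper simply writes $C_V(\langle x,y\rangle')\ne 0$ and leaves the final contradiction implicit.
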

\begin{proof} By way of contradiction suppose that there is $y\in GL(V)$ such that the commutator $[x,y]$ is a transvection. On the one hand, the subgroup $\langle x,y\rangle$ is irreducible because so is $x$. On the other hand, $\langle x,y\rangle'$ is unipotent and therefore $C_V(\langle x,y\rangle')\neq0$. This is a contradiction.
\end{proof}

The following example shows that Theorem \ref{main2} is no longer true if the assumption that the automorphism $\alpha$ is coprime is omitted.

Let $q = 2^a > 2$ and let $G = \mathrm{SL}_2(q)$. Let $x\in G$ be an element of order $q+1$. Then $[x,y]$ has odd order for all $y \in G$. This is because the only elements of even order in $G$ are transvections and, by Corollary \ref{gugu}, these are not commutators $[x,y]$.

We will now prove Theorem \ref{main3}. For the reader's convenience we restate it here. 
\begin{theorem} Let $G$ be a finite group admitting a coprime automorphism $\al$. If any pair of elements from $I_G(\al)$ generates a soluble subgroup, then $[G,\al]$ is soluble. If any pair of elements from $I_G(\al)$ generates a nilpotent subgroup, then $[G,\al]$ is nilpotent.
\end{theorem}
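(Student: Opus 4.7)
The plan is to prove both claims by induction on $|G|$, with the nilpotent one built on the soluble one. For the solubility assertion take a minimal counterexample. Since $[G,\al]$ is $\al$-invariant and inherits the hypothesis, we may assume $G=[G,\al]$. The soluble radical $R(G)$ is $\al$-invariant and $G/R(G)=[G/R(G),\al]$ still satisfies the hypothesis, so by minimality $G/R(G)$ is soluble and hence $R(G)=1$ (otherwise $G$ itself would be soluble). Let $M$ be a minimal $\al$-invariant normal subgroup: since $R(G)=1$, $M=S_1\times\cdots\times S_k$ is a direct product of isomorphic nonabelian simple groups transitively permuted by $\al$, and induction applied to $G/M$ gives that $G/M$ is soluble.

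If $k\ge 2$, pick $s,t\in S_1$ with $\langle s,t\rangle=S_1$ (any nonabelian simple group is $2$-generated). Since $\al$ carries $S_1$ to a different factor, $s^\al$ and $t^\al$ both have trivial projection onto $S_1$, so $\langle s^{-1}s^\al,t^{-1}t^\al\rangle$ projects onto $\langle s^{-1},t^{-1}\rangle=S_1$ and is therefore nonsoluble---contradicting the hypothesis. Hence $k=1$ and $M=S$ is simple; and $\al$ cannot centralize $S$, for otherwise Lemma~\ref{20}(iii) would give $G=[G,\al]\le C_G(S)=1$. So $\al$ induces on $S$ a nontrivial coprime (and hence, by CFSG, field) automorphism.

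The main hurdle is the resulting almost-simple case. Lemma~\ref{1111} supplies $\al$-invariant $p$-subgroups $P_1,P_2\le S$ with $S=\langle P_1,P_2\rangle$ and $P_i=[P_i,\al]$, so each $P_i$ is generated by elements of $I_G(\al)$. I would finish by exhibiting a pair $h_1\in I_{P_1}(\al)$, $h_2\in I_{P_2}(\al)$ whose join is nonsoluble. Using the parabolic/Levi reduction of Lemma~\ref{1111} this is reduced to Lie rank one; in $\mathrm{PSL}_2(q_0^e)$ a direct matrix calculation shows that root-subgroup elements
\[
h_1=\begin{pmatrix}1 & b^{\al}-b\\ 0 & 1\end{pmatrix},\qquad h_2=\begin{pmatrix}1 & 0\\ c^{\al}-c & 1\end{pmatrix},
\]
for appropriate $b,c\in\mathbb{F}_{q_0^e}$, generate a copy of $\mathrm{SL}_2(F)$ over a subfield $F\subseteq\mathbb{F}_{q_0^e}$ large enough to be nonsoluble; the remaining rank-$1$ families $\mathrm{PSU}_3$, ${}^2G_2$, $\mathrm{Sz}$ contain $\mathrm{PSL}_2$-sections to which the same argument applies.

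For the nilpotent claim, the solubility case lets me assume $G=[G,\al]$ is soluble. Replacing $R(G)$ by the Fitting subgroup $F(G)$ in the reduction above, induction yields $G/F(G)=[G/F(G),\al]$ nilpotent while $G$ is not. Since in a soluble group $C_G(F(G))\le F(G)$, there exist distinct primes $p,q$ together with an $\al$-invariant Sylow $p$-subgroup $P\le F(G)$ and an $\al$-invariant Sylow $q$-subgroup $Q\le G$ (both via Lemma~\ref{20}(iv)) such that $Q$ acts nontrivially on $P$. Using $G=[G,\al]$ and a coprime-action argument, I would extract $h_1\in P\cap I_G(\al)$ and $h_2\in Q\cap I_G(\al)$ with $h_2$ inducing a nontrivial $q$-action on $\langle h_1\rangle^Q$; then $\langle h_1,h_2\rangle$ is a Schmidt-type nonnilpotent $p$-by-$q$ subgroup, contradicting the pairwise-nilpotent hypothesis.
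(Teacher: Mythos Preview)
Your overall reduction for the soluble claim matches the paper's, and the $k\ge 2$ case is handled the same way. The genuine gap is in the rank-one analysis: you assert that $\mathrm{Sz}(q)$ contains a $\mathrm{PSL}_2$-section to which your matrix argument applies, but this is false---the Suzuki groups famously have no elements of order $3$, hence no subgroup or section isomorphic to any $\mathrm{PSL}_2(q')$. The paper treats $\mathrm{Sz}(q)$ by an entirely different device: it takes $\al$-invariant cyclic Hall subgroups $J,K$ of orders $q+\sqrt{2q}+1$ and $q-\sqrt{2q}+1$, notes that $[J,\al]$ and $[K,\al]$ are each generated by a single element of $I_G(\al)$, and checks against the known subgroup list that no proper subgroup of $\mathrm{Sz}(q)$ contains both. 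For $\mathrm{PSL}_2(q)$ the paper does not use your root-element computation either; instead it invokes the character-theoretic Lemmas~\ref{psl} and~\ref{psleven} (which give $CC^{-1}=H$ for $C=\al^H$) to conclude that every conjugacy class of $G$ meets $I_G(\al)$, and then picks elements of orders $(q\pm 1)$ or $(q\pm 1)/2$ that no proper subgroup can simultaneously contain. Your matrix approach for $\mathrm{PSL}_2$ is plausible but would still need an argument (via Dickson's classification) that $b^\al-b$ and $c^\al-c$ can be chosen so that the generated subfield has order exceeding $3$.

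For the nilpotent claim your sketch is in the right spirit but the extraction step is not justified: you need to explain why the failure of nilpotency forces some specific $h_1\in I_P(\al)$ and $h_2\in I_Q(\al)$ not to commute. The paper's argument is cleaner and avoids this: in a minimal counterexample one has $G=NH$ with $N$ an elementary abelian normal $p$-subgroup and $H=[H,\al]$ a nilpotent $p'$-subgroup; by Lemma~\ref{55}, $G=\langle I_N(\al),I_H(\al)\rangle$. Since any $x\in I_N(\al)$ and $y\in I_H(\al)$ have coprime orders and generate a nilpotent group, they commute; as $N$ is abelian this forces $I_N(\al)\le Z(G)$, whence $G/Z(G)$ is nilpotent and so is $G$.
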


\begin{proof} Assume that any pair of elements from $I_G(\al)$ generates a soluble subgroup. We wish to prove that $[G,\al]$ is soluble. Assume that this is false and let $G$ be a counterexample of minimal order.   Arguing precisely as in the proof of Theorem \ref{o(g)}, we see that  either $G$ is simple or  $G$ is a product of $r >1$  copies of
a simple group $L$   and $\al$ permutes the factors transitively.   

Consider the second case.  By conjugating (in $\mathrm{Aut}(G)$), we may assume that 
$\al = (x,1, \ldots, 1) \rho$ where $x$ is an automorphism of $L$ and $\rho$ permutes the coordinates of $G$.   
It is clear that an element of  $I_G(\al)$ can have an arbitrary first coordinate in $L$
and since $L$ can be generated by $2$ elements, the result holds in this case. 

Assume that $G$ is simple. 
As in the proof of Theorem \ref{o(g)}, by minimality 
it follows that $G=\mathrm{PSL}_2(q)$ or $\mathrm{Sz}(q)$.

Suppose first that $G=\mathrm{PSL}_2(q)$. Lemmas \ref{psl} and \ref{psleven} imply that every element of $G$ is conjugate to some element in $I_G(\al)$.  In particular, 
there are elements of order $(q \pm 1)$ (if $q$ is even) or $(q \pm 1)/2$ (if $q$ is odd).   Since $q \ge 32$, there are no proper 
subgroups containing elements of both orders.  

Therefore $G=\mathrm{Sz}(q)$ and proper $\al$-invariant subgroups of $G$ are either soluble or contained in $C_G(\al)$. Choose two $\al$-invariant cyclic Hall subgroups $J$ and $K$ of order $q+\sqrt{2q}+1$ and $q-\sqrt{2q}+1$, respectively. It is straightforward that $\langle[J,\al],[K,\al]\rangle$ is a nontrivial $\al$-invariant subgroup generated by two elements from $I_G(\al)$. Hence, $\langle[J,\al],[K,\al]\rangle$ is soluble. The subgroup structure of the Suzuki groups is given in \cite[p. 117]{Wilson}. We see that no proper subgroup of $G$ contains $\langle[J,\al],[K,\al]\rangle$, a contradiction. 

Thus, $[G,\al]$ is soluble, as claimed. We will now show that if any pair of elements from $I_G(\al)$ generates a nilpotent subgroup, then $[G,\al]$ is nilpotent. 

Again, let $G$ be a counterexample of minimal order. Then $G=[G,\al]=NH$, where $N$ is an $\al$-invariant elementary abelian normal $p$-subgroup and $H$ is an $\al$-invariant nilpotent $p'$-subgroup such that $H=[H,\al]$. By Lemma \ref{55}, $G=\langle I_N(\al),I_H(\al)\rangle$. Since any pair of elements from $I_G(\al)$ generates a nilpotent subgroup and since $(|N|,|H|)=1$, we deduce that $I_N(\al)$ centralizes $I_H(\al)$. Taking into account that $N$ is abelian deduce that $I_N(\al)\leq Z(G)$. It follows that $G/Z(G)$ is nilpotent and this completes the proof.
\end{proof}

\section{Proof of Theorem \ref{main}}

We are ready to embark on the proof of Theorem \ref{main}. It will be convenient to deal separately with the case where $G$ is nilpotent.

\subsection{The case of nilpotent groups}

As usual, we write $Z_i(H)$ and $\gamma_i(H)$ for the $i$th term of the upper and lower central series of a group $H$, respectively.

\begin{lemma}\label{22}
Let $p$ be a prime and $G$ a group admitting a coprime automorphism $\al$ such that $G=[G,\al]$. Let $M$ be an $\al$-invariant normal $p$-subgroup of $G$ and assume that $|I_M(\al)|=p^m$ for some nonnegative integer $m$. Then $M\leq Z_{2m+1}(O_p(G))$. 
\end{lemma}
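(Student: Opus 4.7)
My plan is to prove the lemma by induction on $m$. The base case $m=0$ is immediate: if $C_M(\al) = M$, then $M$ is an $\al$-invariant $G$-normal subgroup fixed by $\al$, so by Lemma~\ref{20}(iii), $[G,\al] = G$ centralises $M$, placing $M$ inside $Z(G) \leq Z_1(O_p(G))$.

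For the inductive step, set $P := O_p(G)$ and let $\tilde M$ denote the (unique) largest $\al$-invariant $G$-normal subgroup of $M$ that is contained in $C_M(\al)$. This is well defined since the collection of such subgroups is closed under joins. Lemma~\ref{20}(iii) applied to $\tilde M$, which is fixed pointwise by $\al$, yields $\tilde M \leq Z(G)$, and in particular $\tilde M \leq Z(P)$.

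The crux is the following auxiliary claim, proved by the same induction on $m$: if $\tilde M = 1$ then $M \leq Z_{2m}(O_p(G))$. To see this, pick any minimal $\al$-invariant $G$-normal subgroup $N$ of $M$. Since $N$ is a nontrivial normal subgroup of the $p$-group $P$, we have $N \cap Z(P) \neq 1$; as $N \cap Z(P)$ is also $\al$-invariant and $G$-normal, minimality of $N$ forces $N \leq Z(P)$, so $N$ is elementary abelian and central in $P$. Because $\tilde M = 1$, $\al$ cannot fix $N$ pointwise, and minimality of $N$ gives $[N,\al] = N$, whence $C_N(\al) < N$. Consequently $[M : C_M(\al) N] \leq p^{m-1}$, so $|I_{M/N}(\bar\al)| \leq p^{m-1}$ in $\bar G := G/N$. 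Noting that $\bar G = [\bar G, \bar\al]$ and $O_p(\bar G) = P/N$, the induction hypothesis applied for the value $m-1$ yields $M/N \leq Z_{2m-1}(P/N)$. Because $N \leq Z(P)$, the preimage in $P$ of $Z_{2m-1}(P/N)$ is contained in $Z_{2m}(P)$, so $M \leq Z_{2m}(P)$, proving the auxiliary claim.

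With the auxiliary claim in hand, the main lemma for $m$ follows by passing to $\bar M := M/\tilde M$ in $\bar G := G/\tilde M$. Maximality of $\tilde M$ guarantees that the corresponding subgroup for $\bar M$ in $\bar G$ is trivial, so the auxiliary claim applies to $\bar M$, giving $\bar M \leq Z_{2m}(P/\tilde M)$. Since $\tilde M \leq Z(P)$, lifting this bound back to $P$ yields $M \leq Z_{2m+1}(P)$, as required. The main obstacle was identifying the correct strengthening: a naive induction on $|M|$ by quotienting out an $\al$-fixed minimal normal subgroup loses one level in the upper central series when lifting, yielding only $M \leq Z_{2m+2}(P)$. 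Splitting the argument into the two cases $\tilde M = 1$ (with the improved bound $Z_{2m}$) and general $M$ (via the quotient $M/\tilde M$) is precisely what allows the induction to close at exactly $Z_{2m+1}$.
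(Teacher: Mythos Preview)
Your proof is correct, but it takes a genuinely different route from the paper's. The paper's argument is shorter: setting $K=O_p(G)$ and $N=M\cap Z_2(K)$, it observes that either $N\le Z(K)$, in which case an easy induction on $i$ shows $M\cap Z_i(K)=M\cap Z(K)$ for all $i$ and hence $M\le Z(K)$ outright, or else $N\not\le Z(K)$, so by Lemma~\ref{20}(iii) one has $I_N(\al)\ne1$, whence $|I_{M/N}(\al)|\le p^{m-1}$ and induction gives $M/N\le Z_{2m-1}(K/N)$; since $N\le Z_2(K)$ this lifts to $M\le Z_{2m+1}(K)$. The key device is choosing the quotienting subgroup to sit inside $Z_2(K)$ from the outset, which automatically makes the induction close at $2m+1$.

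Your approach instead isolates the ``$\al$-trivial'' part $\tilde M$, proves the sharper bound $Z_{2m}$ in the case $\tilde M=1$ via a minimal normal subgroup, and then reduces the general case to that one. This is a valid and natural strategy, and your diagnosis of why the naive induction overshoots by one is exactly right. One small remark: the intermediate assertion ``minimality of $N$ gives $[N,\al]=N$'' is not justified as stated, since neither $[N,\al]$ nor $C_N(\al)$ is obviously $G$-normal, so minimality of $N$ as a $G\langle\al\rangle$-submodule does not apply to them. However, this claim is not needed: the conclusion $C_N(\al)<N$ that you actually use is simply a restatement of ``$\al$ does not fix $N$ pointwise'', which you have already established from $\tilde M=1$. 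With that phrase deleted, the argument is clean.
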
 
\begin{proof} If $m=0$, then the result is immediate from Lemma \ref{20}(iii), so assume that $m\geq 1$ and use induction on $m$.

Let $K=O_p(G)$ and $N=M\cap Z_2(K)$. If $N\not\leq Z(K)$, then Lemma \ref{20}(iii) implies that $I_N(\al)\neq 1$, in which case we have $|I_{M/N}(\al)|<|I_M(\al)|=p^m$. By induction $M/N \leq Z_{2m-1}(K/N)$, whence $M\leq Z_{2m+1}(K)$. If $N\leq Z(K)$, then it turns out that  $M\cap Z(K)=M\cap Z_i(K)$ for any $i\geq 2$ and  so, obviously,  $M\leq Z(K)$. This concludes the proof.
\end{proof}

The following result is well known (see for example \cite[Lemma 2.2]{shumy98}). It will be useful later on.
\begin{lemma}\label{24}
Let $G$ be a group of prime exponent $p$ and rank $r_0$. Then there exists a number  $s=s(r_0)$, depending only on $r_0$, such that $|G|\leq p^s$.
\end{lemma}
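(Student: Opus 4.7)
The plan is to reduce the problem to bounding the order of an abelian subgroup of controlled index. First, if $p=2$ then exponent $2$ already forces $G$ to be abelian, hence elementary abelian of rank at most $r_0$, so that $|G|\le 2^{r_0}$; assume henceforth that $p$ is odd.

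I would then invoke the Lubotzky--Mann structural theorem on $p$-groups of bounded rank (as found, for instance, in the monograph of Dixon, du Sautoy, Mann and Segal on analytic pro-$p$ groups): every finite $p$-group of rank $r$ contains a characteristic powerful subgroup $N$ with $[G:N]\le p^{f(r)}$, for some function $f$ depending only on $r$. Applied to our $G$, this yields a powerful characteristic subgroup $N$ of index at most $p^{f(r_0)}$.

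Since $G$ has exponent $p$, so does $N$; and since $N$ is powerful with $p$ odd, we have $[N,N]\le N^p=1$, whence $N$ is abelian. Thus $N$ is elementary abelian of rank at most $r_0$ (the rank hypothesis passes to subgroups), so $|N|\le p^{r_0}$. Combining,
\[
|G|=[G:N]\cdot|N|\le p^{f(r_0)+r_0},
\]
and we may take $s(r_0)=f(r_0)+r_0$.

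The main obstacle, and the whole point of the lemma, is producing a large abelian (equivalently, here, powerful) subgroup of index bounded \emph{independently of} $p$: the Lubotzky--Mann theorem is precisely the input that achieves this. A naive approach via the Kostrikin--Zelmanov solution of the restricted Burnside problem would only bound the nilpotency class of $G$ in terms of $r_0$ and $p$, producing a bound of the form $|G|\le p^{g(r_0,p)\cdot r_0}$ that still depends on $p$; it is the powerfulness conclusion in Lubotzky--Mann, combined with the collapse $N^p=1$ forced by exponent $p$, that eliminates the $p$-dependence.
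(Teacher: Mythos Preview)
Your argument is correct. The Lubotzky--Mann structure theorem (see \cite{LM} or \cite[Chapter~2]{DDM}) does produce a powerful characteristic subgroup of index at most $p^{f(r_0)}$ with $f$ depending only on $r_0$, and the collapse $N^p=1$ then forces $N$ to be elementary abelian of rank at most $r_0$; your remark that a restricted-Burnside approach would introduce unwanted $p$-dependence is also on point.

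The paper does not actually prove this lemma: it simply records it as well known and cites \cite[Lemma~2.2]{shumy98}. So there is no ``paper's proof'' to compare with, and your write-up supplies a perfectly good justification where the paper gives none. If anything, one could add that the reference \cite{shumy98} proceeds along essentially the same lines you do (powerful subgroup of bounded index, then exponent-$p$ collapse), so your proof is in fact the standard one.
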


Throughout this subsection, unless stated otherwise, $G$ is a $p$-group admitting a coprime automorphism $\al$ such that $G=[G,\al]$ and any subgroup generated by a subset of $I_G(\al)$ can be generated by at most $r$ elements.

\begin{lemma}\label{23}  Suppose that $G$ is of prime exponent $p$. There exists a number $l=l(r)$, depending on $r$ only, such that the rank $r(G)$ of $G$ is at most $l$.
\end{lemma}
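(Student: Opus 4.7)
The strategy is to bound the nilpotency class $c$ of $G$ by a function of $r$ alone, and then deduce the rank bound from the fact that $G$ is $r$-generated. First, taking $S = I_G(\al)$ in the hypothesis gives $d(G) = d([G,\al]) \le r$. More importantly, for every $\al$-invariant subgroup $H \le G$ the subgroup $[H,\al] = \langle I_H(\al)\rangle$ is generated by a subset of $I_G(\al)$ and so is $r$-generated. Its image in $H/[H,H]$ equals $[H/[H,H],\al]$, which is abelian and $r$-generated; using that $I_A(\al) = [A,\al]$ as sets for any abelian $\al$-invariant $A$, we conclude
\[
|I_{H/[H,H]}(\al)| \;\le\; p^r
\]
for every $\al$-invariant subgroup $H \le G$.

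Next I would establish the multiplicative identity
\[
|I_H(\al)| \;=\; |I_{H/[H,H]}(\al)| \cdot |I_{[H,H]}(\al)|,
\]
which follows from coprime action via $[H:C_H(\al)] = [H/[H,H]:C_{H/[H,H]}(\al)] \cdot [[H,H]:C_{[H,H]}(\al)]$ (using Lemma \ref{20}(ii) together with the identity $C_H(\al) \cap [H,H] = C_{[H,H]}(\al)$). Iterating this along the derived series $G = G^{(0)} > G^{(1)} > \cdots > G^{(k)} = 1$ yields $|I_G(\al)| \le p^{rk}$, where $k$ is the derived length of $G$. For a nilpotent group of class $c$ one has $k \le \lceil \log_2(c+1)\rceil$, so Lemma \ref{22} applied to $M = G$ (with $m = \log_p|I_G(\al)|$) gives
\[
c \;\le\; 2r\lceil \log_2(c+1)\rceil + 1,
\]
which forces $c \le c_0(r)$ for an explicit function $c_0$ of $r$ only.

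Having bounded the class, $G$ is an $r$-generated nilpotent $p$-group of exponent $p$ and class at most $c_0(r)$; the Witt formula counting basic commutators on $r$ letters of weight $\le c_0(r)$ bounds $\log_p|G|$ by a quantity $N(r)$ depending only on $r$, so $r(G) \le \log_p|G| \le N(r) =: l(r)$. The main obstacle is the interplay in the second step: one must use the hypothesis uniformly on every $\al$-invariant derived term to force each abelianized layer to contribute only a factor of $p^r$ to $|I_G(\al)|$, and then combine this with Lemma \ref{22} in the self-referential estimate $c \le 2r\lceil \log_2(c+1)\rceil + 1$ to extract a bound on $c$ independent of the prime $p$.
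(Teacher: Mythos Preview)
Your argument is correct, and it takes a genuinely different route from the paper's proof. The paper invokes Thompson's critical subgroup $C$: since $A=Z(C)$ is abelian of exponent $p$, one has $|I_A(\al)|\le p^r$ directly, and Lemma~\ref{22} places $A$ inside $Z_{2r+1}(G)$. The critical-subgroup properties $[G,C]\le A$ and $C_G(C)=A$ then force $\gamma_{2r+2}(G)\le A$, so the class of $G$ is at most roughly $4r+3$, an explicit linear bound. Your approach avoids the critical subgroup entirely: you bound $|I_G(\al)|$ by telescoping down the derived series, using only that each abelianized layer contributes at most $p^r$, and then feed this into Lemma~\ref{22} with $M=G$ to obtain the self-referential inequality $c\le 2r\lceil\log_2(c+1)\rceil+1$. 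This is more elementary---no structural theorem about $p$-groups is needed beyond Lemma~\ref{22}---but the resulting class bound is implicit and of order $r\log r$ rather than linear in $r$. Both finish the same way, exploiting that an $r$-generated group of exponent $p$ and bounded class has bounded order.
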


\begin{proof}
Let $C$ be Thompson's critical subgroup of $G$ (see \cite[Theorem 5.3.11]{gore}, and set $A=Z(C)$. Observe that $[A,\al]$ is an $r$-generated abelian subgroup of exponent $p$ and so the order of $[A,\al]$ is at most $p^r$. By Lemma \ref{22} $A$ is contained in $Z_{2r+1}(G)$. Since $[G,C]$ is contained in $A$, we conclude that $C$ is contained in $Z_{2r+2}(G)$. Recall that $\gamma_{2r+2}(G)$ commutes with $Z_{2r+2}(G)$ and so in particular $\gamma_{2r+2} (G)$ centralizes $C$. Again by Thompson's theorem, $C_G(C)=A$. Thus $\gamma_{2r+2}(G)$ is contained in $A$, that is, the quotient group $G/A$ is nilpotent of class $2r+1$. We deduce that $G$ has $r$-bounded nilpotency class. Since $G=[G,\al]$ is $r$-generated by hypothesis, it follows that the rank $r(G)$ of $G$ is $r$-bounded, as desired. 
\end{proof}

We will require the concept of powerful $p$-groups. These were introduced by Lubotzky and Mann in \cite{LM}: a finite $p$-group $H$ is powerful if and only if $H^p\leq[H,H]$ for $p\neq2$  (or $H^4\leq[H,H]$ for $p = 2$). The reader can consult books \cite{DDM} or \cite{khukhu2} for more information on these groups.

\begin{lemma}\label{powerful}There exists a number $\lambda=\lambda(r)$, depending only on $r$, such that $\gamma_{2\lambda+1}(G)$ is powerful.
\end{lemma}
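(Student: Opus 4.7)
The form $2\lambda+1$ in the conclusion strongly suggests the argument ends with an application of Lemma \ref{22}. My plan is to work inside the quotient $\bar G := G/\gamma_c(G)^p$, where $c = 2\lambda+1$ is to be chosen in terms of $r$. Powerfulness of $\gamma_c(G)$ is equivalent to the exponent-$p$ subgroup $\gamma_c(\bar G)=\gamma_c(G)/\gamma_c(G)^p$ being abelian, and all hypotheses of the lemma pass to $\bar G$; in particular $\bar G=[\bar G,\al]$.

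Set $M:=[\gamma_c(\bar G),\al]$. By coprime action, $M$ is an $\al$-invariant normal $p$-subgroup of $\bar G$ of exponent $p$ with $M=[M,\al]$; and since $M=\langle I_M(\al)\rangle$ with $I_M(\al)\subseteq I_{\bar G}(\al)$, the rank hypothesis forces $M$ to be $r$-generated. Applying Lemma \ref{23} with $M$ in the role of the ambient exponent-$p$ group gives $\mathrm{rank}(M)\le l(r)$, and Lemma \ref{24} then yields $|M|\le p^m$ for some $m=m(r)$. Since $|I_M(\al)|\le|M|\le p^m$, Lemma \ref{22} applied in $\bar G$ places $M$ inside $Z_{2m+1}(\bar G)$.

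Next I push $\gamma_c(\bar G)$ itself into the upper central series. In $\bar G/M$ the image of $\gamma_c(\bar G)$ is $\al$-fixed by the choice of $M$, while $\bar G/M=[\bar G/M,\al]$; Lemma \ref{20}(iii) then yields $\gamma_c(\bar G)/M\le Z(\bar G/M)$, i.e.\ $\gamma_{c+1}(\bar G)=[\gamma_c(\bar G),\bar G]\le M\le Z_{2m+1}(\bar G)$. Iterating $[Z_{k+1}(\bar G),\bar G]\le Z_k(\bar G)$ gives $\gamma_{c+2m+2}(\bar G)=1$, so $\bar G$ has nilpotency class at most $c+2m+1$. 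Choosing $\lambda:=m+1$, so that $c=2m+3$ and $2c=4m+6\ge c+2m+2$, we obtain $[\gamma_c(\bar G),\gamma_c(\bar G)]\le\gamma_{2c}(\bar G)=1$. Thus $\gamma_c(\bar G)$ is abelian, which lifts to $[\gamma_c(G),\gamma_c(G)]\le\gamma_c(G)^p$, establishing powerfulness of $\gamma_{2\lambda+1}(G)$.

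The main technical obstacle is the bookkeeping: Lemma \ref{23} is applied with the \emph{subgroup} $M$ playing the role of the ambient group rather than $G$ or $\bar G$, which requires carefully verifying that the rank condition descends to $\al$-invariant subgroups of $\al$-invariant quotients and that $M=[M,\al]$ via coprime action. For $p=2$, where powerfulness demands $[H,H]\le H^4$, the same strategy works with $\gamma_c(G)^4$ in place of $\gamma_c(G)^p$; the target quotient then has exponent $4$ rather than $2$, requiring a small modification of the above (for instance, first running the exponent-$2$ argument above to control $\gamma_c(\bar G)/\gamma_c(\bar G)^2$ and then upgrading).
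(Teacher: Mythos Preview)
Your overall strategy matches the paper's: pass to $\bar G=G/\gamma_c(G)^p$, bound the order of the ``$\al$-moved'' part of $\gamma_c(\bar G)$ via Lemmas~\ref{23} and~\ref{24}, then invoke Lemma~\ref{22} to force $\gamma_c(\bar G)$ low in the upper central series. However, there is a genuine gap in the step where you apply Lemma~\ref{22} to $M=[\gamma_c(\bar G),\al]$: that lemma requires $M$ to be \emph{normal} in $\bar G$, and your assertion that ``by coprime action, $M$ is \ldots normal'' is not justified---and in fact can fail. In general, for an $\al$-invariant normal subgroup $N$ of $G$ with $G=[G,\al]$, the normal closure of $[N,\al]$ in $G$ equals $[N,\al]\,[N,G]$, so $[N,\al]$ is normal only when $[N,G]\le[N,\al]$. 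Taking $G$ to be the free nilpotent group of class $4$ and exponent $7$ on two generators, with $\al$ of order $3$ sending the generators to their $2$nd and $4$th powers, one checks that $G=[G,\al]$ while $[\gamma_2(G),\al]$ is a proper subgroup of $\gamma_3(G)$ and is not normal in $G$.

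The fix is immediate and is exactly what the paper does: apply Lemma~\ref{22} directly to $N:=\gamma_c(\bar G)$, which \emph{is} normal. Since $|I_N(\al)|=[N:C_N(\al)]\le |[N,\al]|=|M|\le p^{\lambda}$ with $\lambda=s(l(r))$, Lemma~\ref{22} gives $N\le Z_{2\lambda+1}(\bar G)$ in one stroke. Then $[\gamma_{2\lambda+1}(\bar G),Z_{2\lambda+1}(\bar G)]=1$ yields $N'=1$ at once, so the detour through Lemma~\ref{20}(iii) and the extra bookkeeping (and the larger value $\lambda=s(l(r))+1$) become unnecessary.
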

\begin{proof}
Let $s(r_0)$ be as in Lemma \ref{24} and let $l(r)$ be as in Lemma \ref{23}. Take $N=\gamma_{2\lambda+1}(G)$, where $\lambda=s(l(r))$. In order to show that $N'\leq N^p$, we assume that $N$ is of exponent $p$ and prove that $N$ is abelian. 

Note that the subgroup $[N,\al]$ is of exponent $p$. By Lemma \ref{23} the rank of $[N,\al]$ is at most $l(r)$.  It  follows from Lemma \ref{24} that $|[N,\al]|\leq p^{s(l(r))}=p^\lambda$. Now Lemma \ref{22} yields $N\leq Z_{2\lambda+1}(G)$. Since $[\gamma_i(G),Z_i(G)]=1$ for any positive integer $i$, we conclude that $N$ is abelian, as required. 
\end{proof}

\begin{lemma}\label{25}
For any $i\geq 1$, there exists a number $m_i=m_i(i,r)$, depending only on $i$ and $r$, such that $\gamma_i(G)$ is an $m_i$-generated group.\end{lemma}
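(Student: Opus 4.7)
The plan is to first bound $d(N)$, where $N:=\gamma_{2\lambda+1}(G)$ is the powerful subgroup produced by Lemma~\ref{powerful}, and then to derive the bound on $d(\gamma_i(G))$ from that. I will use throughout the preliminary observation that for any $\al$-invariant subgroup $H$ of $G$, the subgroup $[H,\al]$ is generated by $I_H(\al)\subseteq I_G(\al)$, so by the standing hypothesis $d([H,\al])\leq r$; in particular $d(G)\leq r$ and $d([N,\al])\leq r$.

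The core step is bounding $d(N)$. Since $N$ is powerful, $d(N)=d(\bar N)$ for $\bar N=N/\Phi(N)$, which is elementary abelian; set $\bar G=G/\Phi(N)$. The observation above gives $d([\bar N,\al])\leq r$, whence $|I_{\bar N}(\al)|=|[\bar N,\al]|\leq p^r$ by coprime action on the abelian $\bar N$. Lemma~\ref{22} then places $\bar N\leq Z_{2r+1}(\bar G)$. I would next analyse the descending chain $V_k=[\bar N,\underbrace{\bar G,\dots,\bar G}_{k}]$, for which $V_0=\bar N$ and $V_{2r+1}=1$. Standard commutator identities inside the abelian group $\bar N$, exploiting that $[V_k,\bar G]=V_{k+1}$, show that modulo $V_{k+1}$ the commutator $[v,g]$ is bilinear in $v\bmod V_k$ and $g\bmod[\bar G,\bar G]$, yielding
\[d(V_k/V_{k+1})\leq d(V_{k-1}/V_k)\cdot d(\bar G/[\bar G,\bar G])\leq r\cdot d(V_{k-1}/V_k).\]
The seed satisfies $d(V_0/V_1)\leq d\bigl(\gamma_{2\lambda+1}(G)/\gamma_{2\lambda+2}(G)\bigr)$, which is $r$-bounded by Witt's formula applied to the $r$-generated nilpotent group $G$. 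Summing the geometric recursion for $k=0,\dots,2r$ then produces an $r$-bound $C(r)$ on $d(\bar N)=d(N)$. The hard point is precisely this: the direct decomposition $\bar N=[\bar N,\al]\oplus C_{\bar N}(\al)$ only controls the first summand, and the confinement $\bar N\leq Z_{2r+1}(\bar G)$ supplied by Lemma~\ref{22} is what allows the fixed part $C_{\bar N}(\al)$ to be bounded indirectly through commutator bilinearity.

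With $d(N)\leq C(r)$ in hand, the lemma should follow at once. For $i\geq 2\lambda+1$, $\gamma_i(G)\leq N$, and powerfulness of $N$ forces every subgroup to be at most $d(N)$-generated, so $d(\gamma_i(G))\leq C(r)$. For $1\leq i\leq 2\lambda$, consider the short exact sequence
\[1\to \gamma_i(G)\cap N\to \gamma_i(G)\to \gamma_i(G/N)\to 1;\]
the kernel lives inside the powerful group $N$ and hence needs at most $d(N)$ generators, while $\gamma_i(G/N)$ lies in the $r$-generated nilpotent group $G/N$ of class at most $2\lambda$, whose lower central quotients have $r$-bounded rank by Witt's formula. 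Subadditivity of $d$ in a short exact sequence of $p$-groups then yields the desired $(i,r)$-bound $m_i$ on $d(\gamma_i(G))$.
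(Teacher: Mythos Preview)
Your argument is correct, but it takes a substantially longer route than the paper. The paper proceeds directly: set $N=\gamma_i(G)$, pass to $G/\Phi(N)$ so that $N$ becomes elementary abelian, observe $|I_N(\al)|\le p^r$, apply Lemma~\ref{22} to get $N\le Z_{2r+1}(G)$, and conclude that $G$ (in this quotient) has nilpotency class at most $i+2r$. Since $G=[G,\al]$ is $r$-generated, an $r$-generated $p$-group of $(i,r)$-bounded class has $(i,r)$-bounded rank, and in particular $d(N)$ is $(i,r)$-bounded. Your proof uses exactly the same key step---Lemma~\ref{22} after killing $\Phi(N)$---but you apply it only to the particular term $N=\gamma_{2\lambda+1}(G)$ and then expend extra effort (the chain $V_k$, the bilinearity recursion, the appeal to powerfulness from Lemma~\ref{powerful}, and the case split $i\gtrless 2\lambda+1$) to transport the conclusion to arbitrary $i$. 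All of that is sound, but none of it is needed: the direct argument works uniformly for every $i$, without ever invoking Lemma~\ref{powerful} or the rank property of powerful groups. What your approach does buy is an explicit decomposition of $d(\bar N)$ via the descending central filtration, which makes the ``bounded class implies bounded rank'' step completely transparent; the paper leaves that as a one-line appeal to the fact that a finitely generated nilpotent $p$-group of bounded class has bounded rank.
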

\begin{proof}
Let $N=\gamma_i(G)$. We can pass to the quotient $G/\Phi(N)$ and assume that $N$ is elementary abelian. It follows that $|I_N(\al)|\leq p^r$. Thus, by Lemma \ref{22}, we have $N\leq Z_{2r+1}(G)$ and deduce that $G$ has nilpotency class bounded  only in terms of $i$ and $r$. Since $G=[G,\al]$ is $r$-generated, we conclude that $r(G)$ is $(i,r)$-bounded as well. Therefore $N$ is $m_i$-generated for some $(i,r)$-bounded number $m_i$. This concludes the proof.
\end{proof}

The next proposition shows that Theorem \ref{main} is valid in the case where $G$ is a $p$-group.
\begin{proposition}\label{pcase}
The rank of $G$ is $r$-bounded.
\end{proposition}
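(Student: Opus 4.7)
The plan is to exploit Lemmas \ref{powerful} and \ref{25} to split $G$ as an extension of a quotient of bounded class by a powerful normal subgroup of bounded generation, and then use the Lubotzky--Mann theorem on powerful $p$-groups as the bridge between ``boundedly generated'' and ``bounded rank''.

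First, set $\lambda = \lambda(r)$ as in Lemma \ref{powerful}, so that $N := \gamma_{2\lambda+1}(G)$ is powerful, and apply Lemma \ref{25} with $i = 2\lambda+1$ to conclude that $N$ is generated by at most $m_{2\lambda+1}$ elements, a number depending only on $r$. Since $N$ is a powerful $p$-group, the Lubotzky--Mann theorem asserts that the rank of a powerful $p$-group coincides with its minimum number of generators, so $r(N) \le m_{2\lambda+1}$, which is $r$-bounded.

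Next, consider $Q = G/N$, which is nilpotent of class at most $2\lambda$. For each $1 \le i \le 2\lambda$ we have $N \le \gamma_i(G)$, so the factor $\gamma_i(Q)/\gamma_{i+1}(Q)$ is a homomorphic image of $\gamma_i(G)$ and hence, by Lemma \ref{25}, is generated by at most $m_i = m_i(i,r)$ elements. These factors are abelian $p$-groups, for which the rank equals the minimum number of generators. Using the standard subadditivity of rank for $p$-groups, namely $r(H) \le r(K) + r(H/K)$ whenever $K \trianglelefteq H$ (which follows from $d(L) \le d(L \cap K) + d(L/(L \cap K))$ applied to an arbitrary subgroup $L \le H$), and iterating along the lower central series of $Q$, we obtain $r(Q) \le \sum_{i=1}^{2\lambda} m_i$, a quantity depending only on $r$. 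Applying the same subadditivity to the extension $N \trianglelefteq G$ gives $r(G) \le r(N) + r(Q)$, which is $r$-bounded, completing the proof.

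The main obstacle at this stage is recognizing that being $r$-generated does not by itself bound the rank of a $p$-group; additional structural information is needed. Here that information is supplied by Lemma \ref{powerful}, and the Lubotzky--Mann theorem is precisely the tool that converts the bounded generation of the powerful tail $\gamma_{2\lambda+1}(G)$ into a bound on its rank. The remaining work, controlling the nilpotent quotient of bounded class, is a routine application of Lemma \ref{25} together with subadditivity.
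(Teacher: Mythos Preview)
Your proof is correct and follows essentially the same approach as the paper: set $N=\gamma_{2\lambda+1}(G)$, bound $r(N)$ via powerfulness plus bounded generation (Lemmas \ref{powerful} and \ref{25}, together with \cite[Theorem 2.9]{DDM}), and then bound $r(G/N)$ using the bounded nilpotency class of the quotient, finishing with subadditivity of rank. The only cosmetic difference is that the paper bounds $r(G/N)$ in one line from the fact that $G=[G,\al]$ is $r$-generated and $G/N$ has $r$-bounded class, whereas you invoke Lemma \ref{25} factor by factor along the lower central series; these amount to the same argument.
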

\begin{proof}
Let  $s(r_0)$ be as in Lemma \ref{24} and $l(r)$ as in Lemma \ref{23}. Take $N=\gamma_{2\lambda+1}(G)$, where $\lambda=\lambda(r)=s(l(r))$. Let $d$ be the minimal number such that $N$ is $d$-generated. Lemma \ref{25} tells us that $d$ is an $r$-bounded integer. Moreover, by Lemma \ref{powerful} $N$ is powerful. It follows from \cite[Theorem 2.9]{DDM} that $r(N)\leq d$, and so the rank of $N$ is $r$-bounded.  Since  the nilpotency class of $G/N$ is $r$-bounded (recall that $\lambda$ depends only on $r$) and $G=[G,\al]$ is $r$-generated, we conclude that  $r(G/N)$ is $r$-bounded as well. Note that $r(G)\leq r(G/N)+r(N)$ and the result follows. 
\end{proof}

\begin{corollary}\label{nilpotent} Assume the hypotheses of Theorem \ref{main} and let $G$ be nilpotent. Then the rank of $[G,\al]$ is $r$-bounded.
\end{corollary}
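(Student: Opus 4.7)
The plan is to reduce the corollary to the $p$-group case already settled in Proposition \ref{pcase}. First, I would replace $G$ by $[G,\al]$: this subgroup is $\al$-invariant and nilpotent (as a subgroup of a nilpotent group), the standard coprime-action identity $[[G,\al],\al]=[G,\al]$ holds, and $I_{[G,\al]}(\al)\subseteq I_G(\al)$, so the hypothesis that every subset of $I_G(\al)$ generates a subgroup of rank at most $r$ passes to $[G,\al]$. Thus we may assume $G=[G,\al]$.

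Next, because $G$ is nilpotent, it decomposes as the direct product of its Sylow subgroups, $G=P_1\times\cdots\times P_k$, and each $P_i$ is characteristic in $G$ and therefore $\al$-invariant. The operation of commutation with $\al$ respects direct products, so $[G,\al]=[P_1,\al]\times\cdots\times[P_k,\al]$; combined with $G=[G,\al]$, this forces $P_i=[P_i,\al]$ for every $i$.

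Each $P_i$ now meets the standing hypotheses of the previous subsection: it is a $p_i$-group with $P_i=[P_i,\al]$, and since $I_{P_i}(\al)\subseteq I_G(\al)$, any subset of $I_{P_i}(\al)$ generates a subgroup of rank at most $r$. Proposition \ref{pcase} therefore yields an $r$-bounded bound on $r(P_i)$ that is uniform in the prime $p_i$.

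Finally, because the $P_i$ have pairwise coprime orders, every subgroup $H\le G$ splits as $H=(H\cap P_1)\times\cdots\times(H\cap P_k)$, and for direct products with coprime orders the minimal number of generators is the maximum of those of the factors. Hence $r(G)=\max_i r(P_i)$ is $r$-bounded, which gives the claim. The argument is essentially routine given Proposition \ref{pcase}; the only mild subtlety is the opening reduction $G=[G,\al]$, which rests on the standard coprime fact that $[G,\al]$ equals its own commutator with $\al$.
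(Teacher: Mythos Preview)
Your proof is correct and follows essentially the same approach as the paper's: both reduce to the Sylow case via Proposition \ref{pcase}, using that for a nilpotent group $[G,\al]$ is the direct product of the $[P_i,\al]$ over the Sylow subgroups $P_i$, and that the rank of a direct product of coprime-order groups is the maximum rank of its factors. The paper's proof is simply a one-sentence version of your argument.
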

\begin{proof} The rank of $[G,\al]$ is equal to the rank of $[P,\al]$, where $P$ is some Sylow $p$-subgroup of $G$, and the result easily follows from Proposition \ref{pcase}.
\end{proof}

\subsection{The case of soluble groups}

As usual, we denote by $F(G)$ the Fitting subgroup of a group $G$. Write $F_0(G)=1, F_1(G)=F(G)$ and let $F_{i+1}(G)$ be the inverse image of $F(G/F_i(G))$. If $G$ is soluble, then the least number $h$ such that $F_h(G)=G$ is called the Fitting height of $G$.

The purpose of this subsection is to show that if under the hypotheses of Theorem \ref{main} the group $G$ is soluble, then $h([G,\al])$ is $(e,r)$-bounded and moreover $[G,\al]$ can be generated by $(e,r)$-boundedly many elements from $I_G(\al)$. One key step consists in showing  that there exists an $(e,r)$-bounded number $f$ such that the $f$th term of the derived series of $[G,\alpha]$ is nilpotent. For this we will require the following result which is an immediate corollary of Hartley-Isaacs Theorem B in \cite{HI}.

\begin{proposition}\label{cristina} Let $H$ be a finite soluble group admitting a coprime automorphism $\al$ of order $e$ such that $H=[H,\al]$. Let $\it{k}$ be any field with characteristic prime to $e$, and $V$ a simple $\it{k}H\langle\al\rangle$-module. Suppose that $\dim [V,\al]=r$. There exists an $(e,r)$-bounded number $\delta=\delta(e,r)$ such that $\dim V\leq \delta$.
\end{proposition}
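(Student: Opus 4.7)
The proposition is essentially an immediate consequence of Hartley-Isaacs' Theorem B \cite{HI}, applied in exactly the same spirit as in the proof of Lemma \ref{02} above, so the plan is to reduce to its hypotheses and quote it.

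Since $\mathrm{char}(k)$ is coprime to $e=|\al|$, the automorphism $\al$ acts semisimply on $V$, yielding the decomposition $V=C_V(\al)\oplus[V,\al]$. Hence the hypothesis $\dim[V,\al]=r$ controls the codimension of the fixed-point space exactly. A routine first step replaces $\langle\al\rangle$ by its quotient modulo the kernel of the action on $V$; this does not change $[V,\al]$, and we may assume $\al$ acts faithfully on $V$ via a cyclic automorphism of order dividing $e$, still coprime to $\mathrm{char}(k)$ and to $|H|$.

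Next, by Clifford's theorem, $V$ decomposes as a direct sum of $H$-isotypic components permuted by $\al$, and since $V$ is simple as a $k[H\langle\al\rangle]$-module this permutation is transitive on some number $n$ of components with $n\mid e$. Fixing one component $V_1$ reduces the bound on $\dim V$ to a bound on $\dim V_1$ with a loss of only the factor $n\le e$; moreover $V_1$ is a simple module for the stabilizer $H\langle\al^n\rangle$, still satisfying the coprime hypotheses and with commutator subspace of dimension at most $r$.

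At this point Hartley-Isaacs' Theorem B applies and supplies an $(e,r)$-bounded function $\delta(e,r)$ with $\dim V\le\delta(e,r)$, which is the content of the proposition. The only point that demands care is to verify that the quantitative conclusion of \cite[Theorem B]{HI} is packaged in terms of the pair $(|A|,\dim[V,A])$ rather than some variant formulated via fixed points; however this is exactly the form in which it was invoked in the proof of Lemma \ref{02}, so no additional work is required.
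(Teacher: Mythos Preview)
Your proposal is correct and takes essentially the same approach as the paper: both amount to a direct appeal to Hartley--Isaacs Theorem~B. The paper in fact gives no proof at all beyond stating that the proposition is an immediate corollary of that theorem. Your preliminary reductions (passing to a faithful quotient of $\langle\al\rangle$ and the Clifford decomposition into $H$-homogeneous components) are harmless but unnecessary, since Theorem~B applies directly to $V$ as a simple $kH\langle\al\rangle$-module with $A=\langle\al\rangle$ acting coprimely on $H$ and $\mathrm{char}(k)\nmid |A|$; no reduction is required before invoking it.
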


In the proof of the next proposition we will use the well-known theorem of Zassenhaus (see \cite[Satz 7]{Zas} or \cite[Theorem 3.23]{rob}) stating that for any $n\geq1$ there exists a number $j=j(n)$, depending only on $n$, such that, whenever $\it{k}$ is a field, the derived length of any soluble subgroup of $\mathrm{GL}_n({k})$ is at most $j$.

\begin{proposition}\label{32}
Assume the hypotheses of Theorem \ref{main}. Suppose that $G$ is soluble and $G=[G,\al]$. There exists a number $f=f(e,r)$, depending only on $e$ and $r$, such that the $f$th term  $G^{(f)}$ of the derived series of $G$ is nilpotent. 
\end{proposition}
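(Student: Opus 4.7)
The plan is to reduce to the case $\Phi(G)=1$ and then exploit the resulting semisimple structure of $F:=F(G)$ to bound the derived length of $G/F$, via Proposition \ref{cristina} to control the dimensions of the simple summands and Zassenhaus's theorem to convert these dimension bounds into a derived length bound.

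Since $\Phi(G)$ is characteristic and hence $\al$-invariant, and the hypotheses of the proposition descend to $\bar G = G/\Phi(G)$ by Lemma \ref{20}(ii), and since $F(\bar G) = F(G)/\Phi(G)$, any containment $\bar G^{(f)} \leq F(\bar G)$ lifts to $G^{(f)} \leq F(G)$, which is nilpotent. So I would first replace $G$ by $\bar G$ and assume $\Phi(G)=1$. Then $\Phi(F) \leq \Phi(G) = 1$ forces $F$ to be an elementary abelian direct product; Fitting's theorem gives $C_G(F)=F$; and the classical result of Gasch\"utz on soluble groups with trivial Frattini subgroup provides a $G$-invariant decomposition $F = M_1 \oplus \cdots \oplus M_s$ into minimal normal subgroups. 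Grouping the $M_i$ by $\al$-orbits yields a $G\langle\al\rangle$-invariant decomposition $F = \bigoplus_k N_k$, where each $N_k$ is a simple $\mathbb{F}_{p(k)}[G\langle\al\rangle]$-module for a prime $p(k)$ dividing $|G|$, which is therefore coprime to $e$.

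For any $n \in N_k$ the element $n^{-1}n^{\al}$ lies in $I_G(\al)$, so $[N_k,\al]$ is generated inside $G$ by a subset of $I_G(\al)$ and therefore, by hypothesis, has rank at most $r$; equivalently $\dim_{\mathbb{F}_{p(k)}}[N_k,\al] \leq r$. Denoting by $\bar H$ the image of $G$ in $\mathrm{GL}(N_k)$, we have $\bar H = [\bar H,\al]$ and $N_k$ is a simple $\mathbb{F}_{p(k)}\bar H\langle\al\rangle$-module, so Proposition \ref{cristina} yields $\dim_{\mathbb{F}_{p(k)}} N_k \leq \delta(e,r)$.

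To conclude, since $G$ preserves each $N_k$ and $C_G(F) = F$, the action on $F$ produces a faithful embedding $G/F \hookrightarrow \prod_k \mathrm{GL}(N_k)$. The image in each factor is a soluble subgroup of $\mathrm{GL}_{d_k}(\mathbb{F}_{p(k)})$ with $d_k \leq \delta(e,r)$, and hence has derived length at most $j(\delta(e,r))$ by Zassenhaus's theorem. Derived length is preserved under subdirect products, so the derived length of $G/F$ is at most $j(\delta(e,r))$, giving $G^{(j(\delta(e,r)))} \leq F$, which is nilpotent; I would take $f(e,r) := j(\delta(e,r))$. The main obstacle is the structural step producing a $G$-invariant \emph{direct sum} decomposition of $F$: if one worked only with a composition series then the induced map $G/F \to \prod_k \mathrm{GL}(W_k)$ on composition factors would have a unipotent kernel that is merely nilpotent of possibly unbounded derived length, and Zassenhaus's bound would fail to propagate to all of $G/F$. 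The semisimplicity afforded by $\Phi(G)=1$ in the soluble case is precisely what eliminates this kernel.
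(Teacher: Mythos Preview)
Your approach is correct and somewhat cleaner than the paper's. The paper argues by minimal counterexample: it reduces to a group with a unique minimal $\al$-invariant normal subgroup $M$, establishes $M=C_G(M)$ via a case analysis on the Fitting subgroup, and then applies Proposition~\ref{cristina} and Zassenhaus to the single faithful irreducible module $M$. Your reduction modulo $\Phi(G)$ exploits the complete reducibility of $F(G)$ to apply Proposition~\ref{cristina} to all simple pieces simultaneously, reaching the same bound $f=j(\delta(e,r))$ without the counterexample machinery or the case analysis.

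One step needs tightening. Having fixed a Gasch\"utz decomposition $F=M_1\oplus\cdots\oplus M_s$ into minimal normal subgroups of $G$, you ``group the $M_i$ by $\al$-orbits'' and claim each resulting $N_k$ is a simple $\mathbb{F}_{p(k)}[G\langle\al\rangle]$-module. But $\al$ permutes the set of \emph{all} minimal normal subgroups of $G$, not necessarily the chosen ones (the decomposition is not canonical when an isotypic component has multiplicity greater than one); and even when it does, an orbit sum of $G$-isomorphic $M_i$ need not be $G\langle\al\rangle$-simple. The fix is immediate: since each $p$-part $F_p$ is semisimple over $\mathbb{F}_p[G]$ and $(e,p)=1$, a Maschke-type averaging over $\langle\al\rangle$ shows $F_p$ is already semisimple over $\mathbb{F}_p[G\langle\al\rangle]$, so you may take the $N_k$ to be simple $G\langle\al\rangle$-submodules directly (equivalently, minimal $\al$-invariant normal subgroups of $G$). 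With this adjustment your argument goes through verbatim.
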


\begin{proof} Let $\delta=\delta(e,r)$ be as in Proposition \ref{cristina} and $f=j(\delta)$ be the number given by the Zassenhaus theorem.

Suppose that the proposition is false and let $G$ be a group of minimal possible order such that the hypotheses hold while $G^{(f)}$ is not nilpotent. Then $G$ has a unique minimal $\al$-invariant normal subgroup $M$. Indeed, suppose that $G$ has two minimal $\al$-invariant normal subgroups, say $M_1$ and $M_2$.  Then $M_1\cap M_2=1$. Since $|G/M_1|<|G|$, the minimality of $G$ implies that $(G/M_1)^{(f)}$ is nilpotent. By a symmetric argument $(G/M_2)^{(f)}$ is nilpotent too. This yields a contradiction since $G^{(f)}$  can  be embedded into a nilpotent subgroup of $G/M_1\times G/M_2$.

We claim that $M=C_G(M)$. Since $M$ is a $p$-subgroup for some prime $p$ and because of the uniqueness of $M$, the Fitting subgroup $F=F(G)$ is a $p$-subgroup too. If $\Phi(F)$  is nontrivial, then  we immediately get a contradiction because $F(G/\Phi(F))=F/\Phi(F)$ and, again by the minimality of $G$, we know that  $(G/\Phi(F))^{(f)}$ is nilpotent, so  in particular $G^{(f)} \leq F$.  

So assume that $\Phi(F)=1$ and thus $F$ is elementary abelian. If $M=F$, then $M=C_G(M)$ since the Fitting subgroup of a soluble group contains its own centralizer (see, for example, \cite[Theorem 1.3, Chap. 6]{gore}). Thus we can assume that $M<F$. By hypotheses, on one hand, we know that  $G^{(f)}\leq F_2(G)$ and, on the other hand, $(G/M)^{(f)}$ is nilpotent (again by the minimality of $G$). Now let $T$ be an $\al$-invariant Hall $p'$-subgroup of $G^{(f)}$. It follows that both $FT$ and $MT$ are $\al$-invariant normal subgroups of $G$. Indeed, $FT/F$ is normal in $G/F$, since  $(G/F)^{(f)}$ is nilpotent  and, similarly, $MT/M$ is normal in $G/M$ since $(G/M)^{(f)}$ is nilpotent as well.

Suppose that $C_F(T)\neq1$. Note that $C_F(T)=Z(FT)$ since $F$ is abelian. Thus $C_F(T)$  is an $\al$-invariant normal subgroup of $G$ because  $FT$ is normal and $\al$-invariant. Hence  $M\leq C_F(T)$. This implies that  $T$ centralizes $M$ and so $MT=T\times M$. Recall that $T\leq F_2(G)$ and $T\cap F=1$. It follows that $T$ is nilpotent.  Then $T\times M$ is normal nilpotent and $T\leq F$, a contradiction. 

Thus, $C_F(T)=1$. On the other hand, we see that $[F,T]\leq M$, since the nilpotent $p'$-subgroup $MT/M$ and the $p$-subgroup $F/M$  are both contained in $F(G/M)$ and therefore commute. Now we have $M<F$ and $F=[F,T]\times C_F(T)$, so it should be $C_F(T)\neq1$, a contradiction. Thus $M=C_G(M)$, as claimed above.

Therefore $G/M$ acts faithfully and irreducibly on $M$. Moreover $[M,\al]$ is $r$-generated and elementary abelian, so $|[M,\al]|\leq p^r$. We view $M$ as a $G/M\langle \al\rangle$-module over the field with $p$ elements. Observe that $p$ does not divide $e$, since $\al$ is a coprime automorphism. By Proposition \ref{cristina} we have  $\dim(M)\leq\delta(e,r)$. Applying the theorem of Zassenhaus conclude that the derived length of $G/M$ is at most $f=f(\delta(e,r))$. Then $G^{(f)}\leq F$, which concludes the proof. 
\end{proof}

As a by-product of the previous result we deduce that the Fitting height of $G$ is $(e,r)$-bounded. 
\begin{corollary}\label{cor}
Under the hypothesis of Proposition \ref{32} the Fitting height $h(G)$ is $(e,r)$-bounded.
\end{corollary}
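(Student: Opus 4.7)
The plan is to derive the Corollary directly from Proposition \ref{32}. The statement of Proposition \ref{32} is that there exists an $(e,r)$-bounded number $f$ such that $G^{(f)}$ is nilpotent, which is equivalent to the containment $G^{(f)} \leq F(G)$. Consequently, the quotient $G/F(G)$ has derived length at most $f$.

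To pass from derived length to Fitting height, I would invoke the elementary inequality $h(H) \leq d(H)$ valid for any finite soluble group $H$, where $d(H)$ denotes the derived length. This is verified by induction on $d(H)$: the derived subgroup $H'$ satisfies $d(H') \leq d(H) - 1$, so by the inductive hypothesis $h(H') \leq d(H) - 1$; since $H/H'$ is abelian, hence nilpotent of Fitting height at most $1$, one concludes $h(H) \leq h(H') + 1 \leq d(H)$.

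Combining these observations yields $h(G/F(G)) \leq d(G/F(G)) \leq f$. Using the general inequality $h(G) \leq h(F(G)) + h(G/F(G))$, which here reads $h(G) \leq 1 + h(G/F(G))$ since $F(G)$ is nilpotent, I would conclude $h(G) \leq f+1$. As $f = f(e,r)$, this bound is $(e,r)$-bounded, exactly as required. There is essentially no obstacle: the Corollary is a direct and almost immediate consequence of Proposition \ref{32} together with the standard comparison between derived length and Fitting height for soluble groups.
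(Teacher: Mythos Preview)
Your proposal is correct and takes essentially the same approach as the paper: both derive $h(G)\leq f+1$ directly from the fact (Proposition~\ref{32}) that $G^{(f)}$ is nilpotent. The paper simply asserts this inequality without further comment, while you spell out the standard justification via the comparison $h\leq d$ for soluble groups; the content is the same.
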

\begin{proof} 
By Proposition \ref{32} we know that $G^{(f)}$ is nilpotent for some $(e,r)$-bounded number $f$. The result follows since $h(G)\leq f+1$. 
\end{proof}
\begin{proposition}\label{soluble}
Under the hypothesis of Proposition \ref{32} the group $G$ is generated by $(e,r)$-boundedly many elements from $I_G(\al)$.
\end{proposition}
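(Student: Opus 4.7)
The plan is to induct on the Fitting height $h=h(G)$, which is already $(e,r)$-bounded by Corollary \ref{cor}. I will produce a bound $\nu(h)$ depending only on $h$, $e$, $r$ satisfying $\nu(1)=\nu_1(e,r)$ and the recursion $\nu(h)\leq\nu_1 e+\nu(h-1)$, so that $\nu(h)\leq\nu_1 e h$ is $(e,r)$-bounded.

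For the base case $h=1$, the group $G$ is nilpotent and hence has $(e,r)$-bounded rank $\rho$ by Corollary \ref{nilpotent}. Decomposing $G=\prod_p P_p$ into $\al$-invariant Sylow subgroups, the condition $G=[G,\al]$ forces $P_p=[P_p,\al]$ for every $p$, so $I_{P_p}(\al)\subseteq I_G(\al)$ spans $P_p/\Phi(P_p)$. For each $p$ I pick at most $d(P_p)\leq\rho$ elements of $I_G(\al)$ whose Frattini images form a basis; Burnside's basis theorem then yields generation of each $P_p$, and since $|\pi(G)|\leq r(G)\leq\rho$, at most $\rho^2$ such elements of $I_G(\al)$ generate $G$.

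For the inductive step $h\geq 2$, let $N=F_{h-1}(G)$, so $G/N$ is nilpotent and $h(N)\leq h-1$. Applying the base case to $G/N$ and lifting via Lemma \ref{20}(ii), I obtain $h_1,\dots,h_{\nu_1}\in I_G(\al)$ whose images generate $G/N$. The subgroup $[N,\al]$ is $\al$-invariant, equals $[[N,\al],\al]$ by coprime action, has Fitting height at most $h-1$, and inherits the $r$-generation hypothesis since $I_{[N,\al]}(\al)\subseteq I_G(\al)$; so by induction it is generated by $b_1,\dots,b_{\nu(h-1)}\in I_G(\al)$. Form the subgroup $K=\langle h_i^{\al^j},\,b_k: 1\leq i\leq\nu_1,\ 0\leq j<e,\ 1\leq k\leq\nu(h-1)\rangle$; because $h_i^{\al^j}\in I_G(\al)$, this uses at most $\nu_1 e+\nu(h-1)$ elements of $I_G(\al)$.

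To finish I must show $K=G$. Let $H_0=\langle h_i^{\al^j}\rangle\leq K$; it is $\al$-invariant and $H_0 N=G$. Using the identity $[xy,\al]=[x,\al]^y[y,\al]$, the subgroup $[H_0,\al]$ equals the normal closure in $H_0$ of the commutators $[h_i^{\al^j},\al]=(h_i^{\al^j})^{-1}h_i^{\al^{j+1}}$, each of which is a product of generators of $K$; since $H_0\subseteq K$, closure under conjugation stays inside $K$, so $[H_0,\al]\leq K$. Lemma \ref{55}, applied to $G=H_0 N$ with generators of $[H_0,\al]$ and the $b_k$'s for $[N,\al]$, then forces $G\leq K$. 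The main technical point is exactly this step: Lemma \ref{55} is formulated in terms of $[H,\al]$ rather than of $H$ itself, which lets me sidestep the harder problem of directly generating $H_0$ by $I_G(\al)$-elements and instead exploit that $[H_0,\al]$ is built from readily-available commutators of our chosen generators.
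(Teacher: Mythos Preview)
Your overall strategy---induction on the $(e,r)$-bounded Fitting height, combined with Lemma~\ref{55}---matches the paper's. You induct via $N=F_{h-1}(G)$ (so $G/N$ is nilpotent and $[N,\al]$ gets the recursive call) whereas the paper takes $F=F(G)$ (so $[F,\al]$ is nilpotent and $G/F$ gets the recursive call); both orderings work equally well.

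There is, however, a genuine error in your base case. The inequality $|\pi(G)|\leq r(G)$ is false: a cyclic group of squarefree order $p_1\cdots p_s$ has rank~$1$ but $s$ prime divisors, and such groups do occur here (e.g.\ with $\al$ acting by inversion when the order is odd). The fix is easy. Since $G$ is nilpotent with $G=[G,\al]$, the quotient $G/\Phi(G)$ is abelian and equals $[G/\Phi(G),\al]$; on an abelian group the map $a\mapsto a^{-1}a^\al$ is a surjective endomorphism onto $[A,\al]$, so by Lemma~\ref{20}(ii) every element of $G/\Phi(G)$ is the image of some element of $I_G(\al)$. A generating set of size at most $\rho$ (indeed at most $r$, since $G=\langle I_G(\al)\rangle$ is $r$-generated by hypothesis) for $G/\Phi(G)$ therefore lifts to elements of $I_G(\al)$, and these generate $G$. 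Equivalently, as the paper does: a product of one $I_{P_p}(\al)$-element from each Sylow subgroup again lies in $I_G(\al)$, so the per-prime generators can be combined rather than counted separately.

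Your inductive step is correct, but the normal-closure computation for $[H_0,\al]\leq K$ is unnecessary: since $H_0=\langle h_i^{\al^j}\rangle\leq K$ by construction and $H_0$ is $\al$-invariant, you have $[H_0,\al]\leq H_0\leq K$ immediately, and Lemma~\ref{55} then gives $G=\langle [H_0,\al],[N,\al]\rangle\leq K$.
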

\begin{proof} If $G$ is a $p$-group, then the claim follows from the Burnside Basis Theorem since $G=[G,\al]$ is $r$-generated. In the case where $G$ is nilpotent, we have $G=[P_1,\al]\times\cdots\times[P_s,\al]$, where $\{P_1,\ldots,P_s\}$ are the Sylow subgroups of $G$. So it   follows from the case of $p$-groups that $G$ is generated by $r$ elements from $I_G(\al)$.

Assume that $G$ is not nilpotent. Let $h=h(G)\geq2$. Since we know from Corollary \ref{cor} that $h$ is $(e,r)$-bounded, we argue by induction on $h$. Let $F=F(G)$. By induction there are $(e,r)$-boundedly many elements $a_1,\dots,a_d\in I_G(\al)$ such that $G=F\langle a_1,\dots,a_d\rangle$. We can choose $a_1,\dots,a_d$ in such a way that the subgroup $H=\langle a_1,\dots,a_d\rangle$ is $\al$-invariant. We have seen in the previous paragraph that $[F,\al]$ can be generated by at most $r$ elements from $I_F(\al)$. Thus, Lemma \ref{55} tells us that $G$ can be generated by $d+r$ elements from $I_G(\al)$.
\end{proof}

\subsection{The general case}

Let $G$ be a finite group admitting a coprime automorphism $\al$ of order $e$ such that any subset of $I_G(\al)$ generates an $r$-generator subgroup. We want to prove that $[G,\al]$ has $(e,r)$-bounded rank. Thus, throughout the remaining part of the paper we assume that $G=[G,\al]$.

\begin{lemma}\label{1} If $G$ is simple, then the rank of $G$ is $r$-bounded.
\end{lemma}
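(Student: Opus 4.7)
The plan is to reduce the question about the simple group $G$ to the nilpotent case already handled in Proposition \ref{pcase}, then feed the resulting bound into part (2) of Lemma \ref{0000}.

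First I would pick a prime $p\in\pi(G)$ and use Lemma \ref{20}(iv) to select an $\al$-invariant Sylow $p$-subgroup $P$ of $G$. Setting $H=[P,\al]$, the subgroup $H$ is $\al$-invariant, is a $p$-group, and satisfies $H=[H,\al]$ by the standard coprime action fact $[P,\al]=[[P,\al],\al]$ (which follows from Lemma \ref{20}(i) applied to $[P,\al]$). Moreover, $I_H(\al)\subseteq I_G(\al)$ since every commutator $h^{-1}h^\al$ with $h\in H$ is also a commutator of the form $g^{-1}g^\al$ with $g\in G$; hence any subgroup of $H$ generated by a subset of $I_H(\al)$ is an $r$-generator group. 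Thus $H$ satisfies all the hypotheses of Proposition \ref{pcase}, which yields that the rank of $H=[P,\al]$ is $r$-bounded.

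Since the prime $p$ was arbitrary, the bound on the rank of $[P,\al]$ holds for every $\al$-invariant Sylow subgroup $P$ of $G$ (each of which yields a subgroup satisfying the hypothesis of Proposition \ref{pcase} by the same argument). Now Lemma \ref{0000}(2) applies directly and gives that the rank of the simple group $G$ is $r$-bounded, as required.

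There is essentially no obstacle here beyond combining the two main earlier results: Proposition \ref{pcase} does the heavy lifting in the nilpotent situation, and Lemma \ref{0000}(2) provides the passage from $\al$-invariant Sylow subgroups to the whole simple group. The only point to be careful about is the transfer of the hypothesis from $G$ to the Sylow subgroup $P$ (or rather to $[P,\al]$), which is immediate from the inclusion $I_{[P,\al]}(\al)\subseteq I_G(\al)$ and the coprime action identity $[P,\al]=[[P,\al],\al]$.
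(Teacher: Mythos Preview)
Your proof is correct and follows the same route as the paper, which simply says the result ``is immediate from Lemma \ref{0000}(2).'' You have made explicit the one step the paper leaves to the reader: verifying the hypothesis of Lemma \ref{0000}(2) by showing that each $[P,\al]$ has $r$-bounded rank, which you do via Proposition \ref{pcase} applied to the $p$-group $H=[P,\al]$ (using $[[P,\al],\al]=[P,\al]$ and $I_H(\al)\subseteq I_G(\al)$).
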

\begin{proof} This is immediate from Lemma \ref{0000} (2). 
\end{proof}

\begin{lemma}\label{2} Suppose that $G$ is semisimple and $\al$ transitively permutes the simple factors. Then the rank of $G$ is $(e,r)$-bounded.
\end{lemma}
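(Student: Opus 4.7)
The plan is to reduce to a single simple factor $S$ of $G$ and then, one subgroup at a time, to exploit the hypothesis on subsets of $I_G(\al)$ to bound the rank of $S$.

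Write $G=S_1\times\cdots\times S_k$ with each $S_i$ isomorphic to a nonabelian simple group $S$ and with $\al$ permuting $\{S_1,\dots,S_k\}$ as a single $k$-cycle. Then $k$ divides $e=|\al|$, so $k\le e$, and since $\mathrm{rank}(G)\le k\cdot\mathrm{rank}(S)$ it is enough to show $\mathrm{rank}(S)\le r$. Relabelling the factors cyclically, I may take the action of $\al$ to have the form
\[
(s_1,\dots,s_k)^\al = (s_k^{\sigma_1},\,s_1^{\sigma_2},\,s_2^{\sigma_3},\,\dots,\,s_{k-1}^{\sigma_k})
\]
for suitable isomorphisms $\sigma_i\colon S\to S$, exactly as in the analogous reduction carried out in the proof of Theorem~\ref{main3}.

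For each $s\in S$ set $g_s=(s,1,\dots,1)\in G$. A direct computation gives
\[
g_s^{-1}g_s^\al = (s^{-1},\,s^{\sigma_2},\,1,\dots,1)\in I_G(\al).
\]
For an arbitrary subgroup $H\le S$ I define
\[
T_H = \langle\, g_s^{-1}g_s^\al : s\in H\,\rangle \le G.
\]
By the standing hypothesis $T_H$ can be generated by at most $r$ elements, since it is generated by a subset of $I_G(\al)$. Projection to the first coordinate sends the listed generator indexed by $s$ to $s^{-1}$, so $\pi_1(T_H)=H$; consequently $H$, being a homomorphic image of an $r$-generated group, is itself $r$-generated. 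Running $H$ through all subgroups of $S$ gives $\mathrm{rank}(S)\le r$, and combining with $k\le e$ yields the desired bound $\mathrm{rank}(G)\le er$.

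The only mildly delicate point is that when $H$ is nonabelian the set $\{g_s^{-1}g_s^\al:s\in H\}$ is not closed under multiplication, so $T_H$ is in general a proper subdirect subgroup of $H\times H^{\sigma_2}$ and not a diagonal copy of $H$. This is precisely why the argument must route through surjectivity of $\pi_1$, and it is also where the hypothesis is used in its \emph{every subset of $I_G(\al)$} form rather than merely as \emph{$[G,\al]$ is $r$-generated} — the latter would not suffice to bound the rank of an arbitrary subgroup of $S$.
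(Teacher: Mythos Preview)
Your argument is essentially the paper's own proof: both reduce to bounding $\mathrm{rank}(S_1)$ by $r$ via the observation that for any $H\le S_1$ the subgroup generated by $\{x^{-1}x^{\al}:x\in H\}$ is $r$-generated and surjects onto $H$ under the first-coordinate projection. Your explicit coordinate description and the closing paragraph about subdirect products are pleasant extras but do not change the substance.

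There is one small omission. Your displayed formula $g_s^{-1}g_s^{\al}=(s^{-1},s^{\sigma_2},1,\dots,1)$ and the surjectivity of $\pi_1$ onto $H$ are only valid when $k\ge 2$; for $k=1$ one has $g_s^{-1}g_s^{\al}=s^{-1}s^{\sigma_1}$, and the projection need not recover $H$ (indeed $T_H=1$ whenever $H\le C_G(\al)$). The paper handles $k=1$ separately by quoting Lemma~\ref{1}, and you should do the same: insert a sentence at the start dispatching the case $k=1$ via Lemma~\ref{1} before assuming $k\ge 2$.
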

\begin{proof} Write $G=S_1\times\dots\times S_k$. Since the case $k=1$ was considered in Lemma \ref{1}, we assume that $k\geq2$. Here $k$ is a divisor of $e$ and so it is sufficient to show that the rank of $S_1$ is at most $r$. Suppose that this is not the case and choose a subgroup $H\leq S_1$ which needs  at least  $r+1$ generators. Consider the subgroup $K\leq S_1\times S_1^\al$ generated by all elements of the form $x^{-1}x^\al$, where $x\in H$. On the one hand, $K$ is generated by a subset of $I_G(\al)$ and so it can be generated by $r$ elements. On the other hand, $H$ is a homomorphic image of $K$ and so we have a contradiction with the fact that $H$ cannot be generated with $r$ elements.
\end{proof}

\begin{lemma}\label{3} Suppose that $G$ is semisimple. Then the rank of $G$ is $(e,r)$-bounded.
\end{lemma}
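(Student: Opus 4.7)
The plan is to reduce to the transitive case (handled in Lemma \ref{2}) by controlling the number of $\al$-orbits on the simple factors of $G$. First, I would decompose $G = O_1 \times \cdots \times O_n$, where each $O_i$ is the product of the simple factors in a single $\al$-orbit. Each $O_i$ is $\al$-invariant, and the restriction of $\al$ to $O_i$ transitively permutes its simple factors. The equality $G=[G,\al]$ forces $O_i=[O_i,\al]$ for every $i$, and the hypothesis on subsets of $I_G(\al)$ restricts to the same hypothesis on subsets of $I_{O_i}(\al)\subseteq I_G(\al)$. Hence Lemma \ref{2} applies to each $O_i$ and bounds $r(O_i)$ by some constant $R=R(e,r)$. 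Since rank is subadditive on direct products, $r(G)\le \sum_i r(O_i) \le nR$, so it remains to bound $n$.

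The main obstacle, and the key step, is bounding $n$ in terms of $r$ alone. My plan is to invoke Theorem \ref{main2}. Each $O_i$ is a direct product of nonabelian simple groups, so $O(O_i)=1$, while $O_i=[O_i,\al]\neq 1$. The contrapositive of Theorem \ref{main2}, applied to $O_i$ with the restricted automorphism, then produces some $h_i\in I_{O_i}(\al)$ of \emph{even order}. Viewed inside $G$ (trivial outside $O_i$), each $h_i$ lies in $I_G(\al)$, and the subgroup $H=\langle h_1,\dots,h_n\rangle = \prod_{i=1}^n \langle h_i\rangle$ is generated by a subset of $I_G(\al)$, so $r(H)\le r$ by hypothesis. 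On the other hand, each cyclic factor $\langle h_i\rangle$ has nontrivial Sylow $2$-subgroup, so the Sylow $2$-subgroup of $H$ is a direct product of $n$ nontrivial cyclic $2$-groups and therefore has $2$-rank exactly $n$. This yields $n\le r(H)\le r$.

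Combining the two bounds gives $r(G)\le r\cdot R(e,r)$, which is $(e,r)$-bounded as required. The crucial conceptual point is that a naive attempt would pick an arbitrary nontrivial element of each $I_{O_i}(\al)$ and would only control a pigeonhole over the (unbounded) set of primes $p_i$ that arise as divisors of the orders of those elements; forcing a \emph{common} prime (namely $p=2$) to appear in every orbit is precisely what Theorem \ref{main2} delivers, and this is what makes the orbit count depend on $r$ only.
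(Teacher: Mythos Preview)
Your proof is correct and follows essentially the same approach as the paper: decompose $G$ into $\al$-orbit blocks, bound the rank of each block via Lemma~\ref{2}, and bound the number of blocks by invoking Theorem~\ref{main2} to produce in each block an element of $I_G(\al)$ of even order, so that the resulting abelian subgroup has Sylow $2$-subgroup of rank equal to the number of blocks. The only cosmetic difference is that the paper cites Lemmas~\ref{1} and~\ref{2} separately for the single-factor and multi-factor orbits, whereas you fold both into Lemma~\ref{2}.
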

\begin{proof} Since $G=[G,\al]$, it follows that $G=G_1\times\dots\times G_m$, where each factor $G_i$ is either simple such that $G_i=[G_i,\al]$ or a direct product of more than one simple groups which are transitively permuted by $\al$. We already know from the two previous lemmas that the rank of $G_i$ is $(e,r)$-bounded so it remains to show that the number $m$ of such factors is $(e,r)$-bounded too. In view of Theorem \ref{main2} each subgroup $G_i$ has an element $g_i$ such that $x_i=g_i^{-1}g_i^\al$ has even order. The abelian subgroup $\langle x_1,\dots,x_m\rangle$ has Sylow 2-subgroup of rank $m$ and so it cannot be generated with less than $m$ elements. Hence, $m\leq r$.
\end{proof}
Write $G_0=G\langle\al\rangle$.
\begin{lemma}\label{444} Let $N$ be an $\al$-invariant normal subgroup of $G$ and assume that $N=S_1\times\dots\times S_l$ is a direct product of nonabelian simple factors $S_i$. Then both $l$ and the rank of $N$ are $(e,r)$-bounded.
\end{lemma}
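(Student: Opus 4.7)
The plan is to bound both $l$ and $\mathrm{rank}(N)$ by analyzing the $G_0$-orbits on the set of simple factors $\{S_1,\ldots,S_l\}$. Write those orbits as $\Omega_1,\dots,\Omega_u$ and set $M_a=\prod_{S_i\in\Omega_a}S_i$, so that $N=M_1\times\cdots\times M_u$ and each $M_a$ is $\al$-invariant and normal in $G$. For each $\al$-orbit $O_j$ on simple factors, put $T_j=\prod_{S_i\in O_j}S_i$ and call $O_j$ \emph{active} if either $|O_j|\ge 2$, or $|O_j|=1$ and $\al$ acts nontrivially on the single factor.

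The key step is to produce an element of even order in $I_G(\al)\cap T_j$ for every active orbit. If $|O_j|\ge 2$, pick an involution $s$ in some factor $S\in O_j$: then $s$ and $s^\al$ lie in distinct commuting direct factors of $T_j$, so $s^{-1}s^\al = s\cdot s^\al$ is an involution in $I_G(\al)\cap T_j$. If $|O_j|=1$, the single factor is simple with trivial odd-core and a nontrivial coprime automorphism, so Theorem \ref{main2} applied to it yields an element of $I_G(\al)$ of even order inside that factor. Distinct $T_j$'s commute, so these elements (one per active orbit) generate an abelian subgroup whose Sylow $2$-subgroup has rank equal to the number of active orbits; the hypothesis on $I_G(\al)$ thus forces this number to be at most $r$, and consequently the number of $\al$-moved simple factors is at most $er$.

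Each $M_a$ must contain at least one active orbit: otherwise $\al$ fixes $M_a$ pointwise, whereupon $G=[G,\al]$ together with Lemma \ref{20}(iii) forces $M_a\leq Z(G)$, contradicting non-abelianness. Hence $u\leq r$. To bound $|\Omega_a|$, apply Lemma \ref{02} to the faithful quotient of $G_0$ acting on $\Omega_a$: the kernel is $\al$-invariant, so the coprimality and the identity $H=[H,\al]$ transfer through the quotient, and $\al$ moves at most $er$ points on $\Omega_a$. Lemma \ref{02} thus makes $|\Omega_a|$ $(e,r)$-bounded, and $l=\sum_a|\Omega_a|\leq u\cdot\max_a|\Omega_a|$ is $(e,r)$-bounded.

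For the rank, a simple factor $S_i$ appearing in an active orbit has $(e,r)$-bounded rank by Lemma \ref{1} (if $|O_j|=1$, applied to $S_i$) or by Lemma \ref{2} (if $|O_j|\ge 2$, applied to $T_j$, noting $\mathrm{rank}(S_i)\leq\mathrm{rank}(T_j)$). A factor in an inactive orbit shares its $G_0$-orbit with an active factor and is therefore $G$-conjugate, hence isomorphic, to one of bounded rank. So every $S_i$ has $(e,r)$-bounded rank and $\mathrm{rank}(N)\leq l\cdot\max_i\mathrm{rank}(S_i)$ is $(e,r)$-bounded. The main obstacle is the inactive orbits, where $\al$ produces no useful commutator; they are handled indirectly, via $G$-conjugacy within each $\Omega_a$, for both the size bound (through Lemma \ref{02}) and the rank bound.
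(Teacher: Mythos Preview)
Your argument is correct and follows essentially the same route as the paper's proof: bound the number of factors on which $\al$ acts nontrivially via an even-order element trick (you construct the elements explicitly, the paper invokes Lemma~\ref{3} on $[N,\al]$ and reads off the Sylow $2$-rank), use Lemma~\ref{20}(iii) to force each $G_0$-orbit to contain such a factor, then apply Lemma~\ref{02} to bound orbit lengths and conjugacy to handle inactive factors. The only cosmetic difference is that you work with $\al$-orbits and construct the involutions $s\cdot s^\al$ by hand, whereas the paper packages this step inside the citation of Lemma~\ref{3}.
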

\begin{proof}  In view of Lemma \ref{3} the rank of $[N,\al]$ is $(e,r)$-bounded. Since all factors $S_i$ have even order and since the rank of the Sylow $2$-subgroup of $[N,\al]$ is $(e,r)$-bounded, it follows that only $(e,r)$-boundedly many, say $m$, of the subgroups $S_1,\dots,S_l$ are not contained in $C_G(\al)$. On the other hand, because of Lemma \ref{20}(iii) no nontrivial normal subgroup of $G$ can be contained in $C_N(\al)$. Thus, every simple factor in the list $S_1,\dots,S_l$ is conjugate in $G$ with a factor which is not centralized by $\al$ and so by Lemma \ref{3} each $S_i$ has $(e,r)$-bounded rank. Hence, we only need to show that $l$ is  $(e,r)$-bounded.

The group $G_0$ naturally acts on the set $\{S_1,\dots,S_l\}$ by conjugation. The above argument shows that there are at most $m$ $G_0$-orbits in this action. It is sufficient to show that each $G_0$-orbit has $(e,r)$-bounded length. Let $K$ be the kernel of the action, that is, the intersection of normalizers of $S_i$. It is straightforward from Lemma \ref{02} that the index of $K$ in $G_0$ is $m$-bounded. Since the length of each $G_0$-orbit is at most the index $[G_0:K]$, the result follows. 
\end{proof}

\begin{lemma}\label{44} Suppose that $G$ has an $\al$-invariant subgroup $K$ of index $i$ such that $[K,\al]$ is of rank $s$. Then the rank of $G$ is $(i,s)$-bounded.
\end{lemma}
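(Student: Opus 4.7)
The plan is to first replace $K$ by its $G$-core $N := \bigcap_{g \in G} K^g$, a normal subgroup of $G$ of index at most $i!$. Because $\al$ normalizes $K$, it permutes the $G$-conjugates of $K$, so $N$ is also $\al$-invariant. Since $N \leq K$, one has $I_N(\al) \subseteq I_K(\al)$ and hence $[N,\al] \leq [K,\al]$, so the subgroup $L := [N,\al]$ has rank at most $s$.

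I would then analyze the structure of $G/L$. By construction $\al$ acts trivially on $N/L$, hence Lemma \ref{20}(iii) applied inside $G/L$ yields that $[G/L,\al]$ centralizes $N/L$. Invoking the standing assumption $G = [G,\al]$, which gives $[G/L,\al] = G/L$, we deduce $N/L \leq Z(G/L)$. Since $[G/L : Z(G/L)] \leq [G:N] \leq i!$, Schur's theorem (\cite[Theorem 4.12]{rob}) bounds $|(G/L)'|$ by a function $c(i)$ depending only on $i$.

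Let $M$ denote the full preimage of $(G/L)'$ in $G$; then $L \leq M \unlhd G$ and $|M/L| \leq c(i)$. As rank is subadditive in extensions, $\operatorname{rank}(M) \leq \operatorname{rank}(L) + \operatorname{rank}(M/L) \leq s + \log_2 c(i)$, which is $(i,s)$-bounded. The quotient $G/M$ is abelian. Since $G = [G,\al] = \langle I_G(\al)\rangle$ and the standing hypothesis of this subsection forces the whole set $I_G(\al)$ to generate an $r$-generator subgroup, $G$ is $r$-generated and therefore so is $G/M$; an $r$-generated finite abelian group has rank at most $r$. Subadditivity of rank once more gives
\[
\operatorname{rank}(G) \leq \operatorname{rank}(M) + \operatorname{rank}(G/M) \leq s + \log_2 c(i) + r,
\]
which is the desired $(i,s)$-bound (with $r$ regarded as a fixed parameter of the subsection).

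The main technical point is the last step: once one quotients by $M$, Schur's theorem no longer applies, and it is the global $r$-generation hypothesis on $I_G(\al)$, transmitted via $G = [G,\al]$, that controls the rank of the abelian quotient $G/M$. Everything else is a routine assembly of Lemma \ref{20}, the core-of-a-subgroup construction, Schur's theorem, and subadditivity of rank in group extensions.
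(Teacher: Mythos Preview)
Your argument has a genuine gap: the subgroup $L=[N,\al]$ need not be normal in $G$, so the quotient $G/L$ on which you invoke Lemma~\ref{20}(iii) is not defined. It is true that $[N,\al]\unlhd N$ and that $C_G(\al)$ normalises $[N,\al]$, but $NC_G(\al)$ can be a proper subgroup of $G$. A small example: let $A=\mathbb{F}_5^2$, let $H=\langle h\rangle\cong C_5$ act on $A$ via $h=\begin{pmatrix}1&1\\0&1\end{pmatrix}$, set $G=A\rtimes H$, and let $\al$ act by $\mathrm{diag}(1,-1)$ on $A$ and by inversion on $H$. Then $G=[G,\al]$, $N:=A\unlhd G$ is $\al$-invariant, but $[N,\al]=\{(0,y)\}$ is not $H$-invariant, hence not normal in $G$.

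The fix is exactly what the paper does: replace $L$ by its normal closure $K_1=\langle L^G\rangle$. Since $L\unlhd N$ and $[G:N]\le i!$, the subgroup $L$ has at most $i!$ conjugates in $G$, so $K_1$ is a product of at most $i!$ subgroups each of rank $\le s$, whence $\operatorname{rank}(K_1)\le i!\cdot s$. Your Steps 2--5 then go through verbatim with $K_1$ in place of $L$.

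There is also a secondary issue in your last step. Bringing in the ambient parameter $r$ yields only an $(i,s,r)$-bound, which is weaker than the $(i,s)$-bound asserted by the lemma. In fact $r$ is unnecessary here: in the abelian quotient $\bar G=G/M$ one has $\bar G=[\bar G,\al]$, so $C_{\bar G}(\al)=1$; but the image $\bar N$ of $N$ satisfies $[\bar N,\al]\le \overline{[N,\al]}\le \bar M=1$, hence $\bar N\le C_{\bar G}(\al)=1$. Thus $|G/M|\le [G:N]\le i!$, and $\operatorname{rank}(G/M)$ is $i$-bounded without any appeal to $r$. With these two corrections your proof coincides with the paper's argument.
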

\begin{proof} We can assume that $K$ is normal in $G$. Since $[K,\al]$ is normal in $K$, it follows that the index of the normalizer of $[K,\al]$ in $G$ is a divisor of $i$. Using the fact that the rank of $[K,\al]$ is $s$ we conclude that the rank of the normal closure $K_1$ of $[K,\al]$ is $(i,s)$-bounded. In view of Lemma \ref{20}(iii) the quotient $K/K_1$ is central in $G/K_1$. Hence, by Schur's Theorem  \cite[Theorem 4.12]{rob}, the image in $G/K_1$ of the commutator subgroup $G'$ has $i$-bounded order. Therefore we can pass to the quotient $G/K_1G'$ and assume that $G$ is abelian. In this case the lemma is obvious. \end{proof}

We will now establish several lemmas about generation of $G$ by elements from $I_G(\al)$. Recall that Proposition \ref{soluble} tells us that if $G$ is soluble, then $G$ can be generated by an $(e,r)$-bounded number of elements from $I_G(\al)$.

\begin{lemma}\label{5554} If $G$ is semisimple, then $G$ can be generated by an $(e,r)$-bounded number of elements from $I_G(\al)$.
\end{lemma}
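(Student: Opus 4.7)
The strategy is to decompose $G$ along $\al$-orbits on the simple factors and assemble generating sets via Lemma \ref{55}. Write $G=G_1\times\cdots\times G_m$, where each $G_j$ is the product of the simple factors in a single $\al$-orbit. By Lemma \ref{444} the number $m$ and the rank of each $G_j$ are $(e,r)$-bounded; since each $G_j$ is characteristic in $G$, we have $G_j=[G_j,\al]$. Setting $N_j=\prod_{i\neq j}G_i$ and iterating Lemma \ref{55} applied to the decomposition $G=G_j\cdot N_j$ reduces the problem to showing that each $G_j$ is generated by $(e,r)$-boundedly many elements of $I_{G_j}(\al)\subseteq I_G(\al)$.

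If $G_j=L$ is a single $\al$-invariant simple factor, apply Lemma \ref{1111} to write $L=\langle [P_1,\al],[P_2,\al]\rangle$ for two $\al$-invariant Sylow $p$-subgroups. Each $[P_i,\al]$ is generated by $I_{P_i}(\al)\subseteq I_G(\al)$ and is $r$-generated by hypothesis, so Burnside's basis theorem yields a set of $r$ generators inside $I_{P_i}(\al)$; hence $L$ is generated by $2r$ elements of $I_G(\al)$.

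For $G_j=L_1\times\cdots\times L_k$ with $k>1$ and $\al$ transitively permuting the factors, set $L=L_1$ and $\sigma=\al^k|_L$. If $\sigma\neq 1$, apply Lemma \ref{1111} to $L$ with automorphism $\sigma$; the crucial observation is that for any $y=g^{-1}g^\sigma\in I_L(\sigma)$ the diagonal element $h=(g,\ldots,g)\in G_j$ satisfies $h^{-1}h^\al=(y,1,\ldots,1)$, so $(y,1,\ldots,1)\in I_{G_j}(\al)$. Lifting a set of $2r$ generators of $L$ from $I_L(\sigma)$ and taking $\al^i$-shifts produces $2rk\le 2re$ elements of $I_{G_j}(\al)$ generating $L_1\times\cdots\times L_k$.

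The main obstacle is the subcase $\sigma=1$, where $\al$ acts on $G_j=L^k$ as a pure cyclic shift and no nontrivial element of $I_{G_j}(\al)$ is supported on a single coordinate. Here I take a $2$-element generating set $\{u,v\}$ of $L$ (by the classification) and set $\xi_u=(u^{-1},u,1,\ldots,1)$ and $\xi_v=(v^{-1},v,1,\ldots,1)$; these lie in $I_{G_j}(\al)$ via the witnesses $h=(u,1,\ldots,1)$ and $h=(v,1,\ldots,1)$ respectively. Together with their $\al^i$-shifts for $0\le i\le k-1$, these give $2k\le 2e$ elements generating a subgroup $T\le G_j$. A direct componentwise computation yields $[[\xi_u,\xi_v],\xi_u^\al]=(1,[[u,v],u^{-1}],1,\ldots,1)$, an element of $T$ supported only on the second coordinate whose entry is nontrivial for generic $u,v$ (the nonabelian simplicity of $L$ guarantees such a choice exists, possibly after replacing $u^{-1}$ by $v^{-1}$). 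Since $T$ is $\al$-invariant, $\al^i$-conjugation spreads such single-coordinate elements across all positions, and because $T$ projects onto each $L_i$, a standard normal-closure argument exploiting the simplicity of $L$ forces $T\cap L_i=L_i$ for every $i$, whence $T=G_j$. Summing the bounds over all orbit products yields $G$ generated by an $(e,r)$-bounded number of elements of $I_G(\al)$.
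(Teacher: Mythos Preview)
Your argument is correct, but the route you take for a transitive orbit with $k\ge2$ factors differs from the paper's. You split according to whether $\sigma=\al^{k}|_{L}$ is trivial: when $\sigma\neq1$ you invoke Lemma~\ref{1111} for the pair $(L,\sigma)$ and use the diagonal trick $h=(g,\dots,g)$ to embed $I_L(\sigma)$ into $I_{G_j}(\al)$; when $\sigma=1$ you manufacture a single-coordinate element via the double commutator $[[\xi_u,\xi_v],\xi_u^{\al}]$ and then run a subdirect/normal-closure argument. The paper avoids this case distinction entirely. Taking any $2$-generating pair $a,b$ of $S_1$ and setting $x_1=a^{-1}a^{\al}$, $x_2=b^{-1}b^{\al}$, $x_3=ab\big((ab)^{-1}\big)^{\al}$, one computes directly that $x_1x_2x_3=[a,b]\in S_1$; so the smallest $\al$-invariant subgroup $K$ containing $x_1,x_2,x_3$ already meets $S_1$ nontrivially and projects onto it, forcing $K=G_j$ with at most $3e$ generators from $I_G(\al)$. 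This product trick simultaneously handles the twisted and untwisted cyclic shifts. Your approach, on the other hand, makes transparent why the case $\sigma\neq1$ is no harder than the simple case, and your bound on the number of orbits via Lemma~\ref{444} is perfectly adequate, though the paper gets the sharper bound $m\le r$ directly from the proof of Lemma~\ref{3}.
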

\begin{proof} Let $G=S_1\times\dots\times S_l$ where the factors $S_i$ are simple. The automorphism $\al$ permutes the simple factors and the proof of Lemma \ref{3} shows that there are at most $r$ orbits under this action. Therefore without loss of generality we assume that $\al$ transitively permutes the factors $S_i$ and so $l$ is a divisor of $e$. If $G$ is simple, then by Lemma \ref{1111} $G$ is generated by two nilpotent subgroups $P_1$ and $P_2$ such that $[P_1,\al]=P_1$ and $[P_2,\al]=P_2$. Each of the subgroups $P_i$ is generated by at most $r$ elements from $I_G(\al)$ and so $G$ is generated by at most $2r$ such elements. We will therefore assume that $l\geq2$.

We will use the fact each nonabelian simple group can be generated by two elements. Let $a,b$ generate $S_1$.

Set $$x_1=a^{-1}a^\al,\ x_2=b^{-1}b^\al \text{ and } x_3=ab((ab)^{-1})^\al.$$ Note that all $x_i$ belong to $I_G(\al)$. Let $K$ be the minimal $\al$-invariant subgroup of $G$ containing $x_1,x_2$, and $x_3$. Obviously $K$ is generated by at most $3e$ elements from $I_G(\al)$. Observe that $1\neq x_1x_2x_3=[a,b]\in S_1\cap K$. Evidently, the projection of $K$ to $S_1$ is the whole group $S_1$, that is, $K$ is a subdirect product of the factors $S_i$. We deduce that the conjugacy class $[a,b]^K$ generates $S_1$ and so $S_1$ is contained in $K$. Since $K$ is $\al$-invariant we are forced to conclude that $K=G$ and the result follows.
\end{proof}

\begin{lemma}\label{5} Suppose that $G$ is semisimple-by-soluble. Then $G$ is generated by an $(e,r)$-bounded number of elements from $I_G(\al)$.
\end{lemma}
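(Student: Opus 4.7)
The plan is to reduce the problem to the two cases we have already handled---the semisimple case (Lemma \ref{5554}) and the soluble case (Proposition \ref{soluble})---and then to glue the two resulting sets of generators together.

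First, I would choose $N$ to be an $\al$-invariant semisimple normal subgroup of $G$ with $G/N$ soluble; such a choice is always possible, because if $N_0$ is any semisimple normal subgroup with $G/N_0$ soluble, then $N_0 N_0^\al \cdots N_0^{\al^{e-1}}$ is still a semisimple normal subgroup with soluble quotient. The key structural observation is that $[N,\al]$ is again semisimple: being $\al$-invariant and normal in the semisimple group $N$, it is the direct product of those simple direct factors of $N$ lying in the $\al$-orbits on which $\al$ acts nontrivially. Moreover, the standard coprime-action identity $N=[N,\al]C_N(\al)$ yields $[[N,\al],\al]=[N,\al]$. Hence Lemma \ref{5554} applies to $[N,\al]$ and produces $c_1=c_1(e,r)$ elements $b_1,\dots,b_{c_1}\in I_{[N,\al]}(\al)\subseteq I_G(\al)$ that generate $[N,\al]$.

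Next I would pass to $\bar G:=G/[N,\al]$. Since $N=[N,\al]C_N(\al)$, the image $\bar N$ of $N$ in $\bar G$ is centralized by $\al$, so Lemma \ref{20}(iii), applied in $\bar G=[\bar G,\al]$, forces $\bar N\le Z(\bar G)$; combined with the solubility of $G/N$, this shows $\bar G$ is soluble. The $r$-generator hypothesis descends to $\bar G$ as well: by Lemma \ref{20}(ii) every subset of $I_{\bar G}(\al)$ is the image of a subset of $I_G(\al)$, and the quotient of an $r$-generated subgroup is still $r$-generated. Proposition \ref{soluble} therefore produces $c_2=c_2(e,r)$ elements $\bar z_1,\dots,\bar z_{c_2}\in I_{\bar G}(\al)$ generating $\bar G$, and I would lift them, again via Lemma \ref{20}(ii), to $z_1,\dots,z_{c_2}\in I_G(\al)$.

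To finish, set $H:=\langle b_1,\dots,b_{c_1},z_1,\dots,z_{c_2}\rangle$. By construction $H\supseteq [N,\al]$ and $H$ projects onto $\bar G=G/[N,\al]$, hence $H=G$, and $G$ is generated by $c_1+c_2$ elements of $I_G(\al)$, an $(e,r)$-bounded number. I do not foresee a genuine obstacle: the only points that need careful verification are the semisimplicity of $[N,\al]$ together with the identity $[[N,\al],\al]=[N,\al]$, and the transfer of the $r$-generator hypothesis to $\bar G$, both of which are routine consequences of coprime action and Lemma \ref{20}. Once these are in place, the conclusion is simply the conjunction of Lemma \ref{5554} and Proposition \ref{soluble}, assembled through the standard lifting of commutator-type elements.
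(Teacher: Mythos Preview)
Your proof is correct, but it follows a genuinely different route from the paper's argument. The paper works \emph{inside} $G$: it takes a minimal supplement $H_0$ to $N$ in $G_0=G\langle\al\rangle$ containing $\al$, sets $H=H_0\cap G$, and uses Lemma \ref{03} (so that $H\cap N\le\Phi(H)$, whence $H$ is soluble since $\Phi(H)$ is nilpotent and $H/(H\cap N)$ is soluble) together with the minimality of $H_0$ to obtain $H=[H,\al]$. Proposition \ref{soluble} is then applied to this concrete soluble subgroup $H$, and Lemma \ref{55} glues the generators of $H$ and $[N,\al]$.

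You instead work with the \emph{quotient} $\bar G=G/[N,\al]$: Lemma \ref{20}(iii) forces the image of $N$ to be central, so $\bar G$ is soluble, and Proposition \ref{soluble} applies directly to $\bar G$; lifting via Lemma \ref{20}(ii) replaces the gluing step. Your approach is slightly more economical here, since it avoids both Lemma \ref{03} and Lemma \ref{55}. The paper's supplement technique, however, is the template reused verbatim in the next two lemmas (Lemmas \ref{7} and \ref{8}), so within the paper it earns its keep by uniformity. Either argument is perfectly adequate for this lemma.
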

\begin{proof} Let $N$ be an $\al$-invariant normal semisimple subgroup of $G$ such that $G/N$ is soluble. Choose a minimal subgroup $H_0$ of $G_0$ such that $G_0=NH_0$. Without loss of generality we can assume that $\al\in H_0$. Set $H=H_0\cap G$ and note that $H=[H,\al]$. Note that $H$ is soluble by Lemma \ref{03}. We have $G=NH$ and we know from Proposition \ref{soluble} and Lemma \ref{5554} that both $H$ and $[N,\al]$ can be generated by an $(e,r)$-bounded number of elements from $I_G(\al)$. The result follows from Lemma \ref{55}.
\end{proof}

\begin{lemma}\label{7} Assume that $G_0/\Phi(G_0)$ is semisimple-by-soluble. Then $G$ is generated by an $(e,r)$-bounded number of elements from $I_G(\al)$.
\end{lemma}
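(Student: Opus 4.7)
The plan is to apply Lemma~\ref{5} inside the quotient $\bar G_0 := G_0/\Phi(G_0)$ and then use the Frattini non-generator property to pull the generators back to $G$.

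First I would verify that $\bar G := G\Phi(G_0)/\Phi(G_0)$ is itself semisimple-by-soluble: it is a normal subgroup of $\bar G_0$, and a normal subgroup of a semisimple group is semisimple, while the quotient by such a subgroup embeds in a soluble group and so is soluble. Since $G = [G,\al]$, taking images gives $\bar G = [\bar G, \bar \al]$. Thus Lemma~\ref{5} applies to $\bar G$ and produces an $(e,r)$-bounded integer $N$ together with elements $\bar x_1,\dots,\bar x_N \in I_{\bar G}(\bar\al)$ generating $\bar G$. By Lemma~\ref{20}(ii), each $\bar x_i$ is the image of some $g_i \in I_G(\al)$.

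Next I would set $L := \langle g_i^{\al^j} : 1 \le i \le N,\ 0 \le j < e\rangle$. Since $I_G(\al)$ is $\al$-invariant (if $g^{-1}g^\al \in I_G(\al)$ then $(g^\al)^{-1}(g^\al)^\al$ is of the same form), $L$ is generated by at most $Ne$ elements from $I_G(\al)$, which is $(e,r)$-bounded; and $L$ is $\al$-invariant by construction. The images $\bar L$ satisfy $\bar L \supseteq \langle \bar x_1,\dots,\bar x_N\rangle = \bar G$, so $\langle L,\al\rangle\,\Phi(G_0) = G_0$. Because $\Phi(G_0)$ consists of non-generators of $G_0$, this forces $\langle L,\al\rangle = G_0$.

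Finally, coprimeness gives the semidirect decomposition $G_0 = G \rtimes \langle\al\rangle$ with $G \cap \langle\al\rangle = 1$, and $\al$-invariance of $L$ yields $\langle L,\al\rangle = L\langle\al\rangle$; so any $g \in G$ can be written as $l\al^k$ with $l \in L$, forcing $\al^k = l^{-1}g \in G \cap \langle\al\rangle = 1$ and $g = l \in L$. Hence $L = G$, proving that $G$ is generated by at most $Ne$ elements from $I_G(\al)$. The only genuine content is the appeal to Lemma~\ref{5}; the main thing to watch is that one enlarges the lifted generating set to an $\al$-invariant one without losing the $(e,r)$-bounded count, which is why the $\al$-orbit of each $g_i$ is taken.
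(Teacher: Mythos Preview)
Your proposal is correct and follows essentially the same approach as the paper: apply Lemma~\ref{5} to the image $\bar G$ of $G$ in $G_0/\Phi(G_0)$, lift the generators via Lemma~\ref{20}(ii), use the Frattini property to get $G_0=\langle\al,x_1,\dots,x_s\rangle$, and then conclude that $G$ is generated by the $\al$-orbits of the $x_i$. You supply a bit more detail than the paper does (verifying that $\bar G$ is itself semisimple-by-soluble and spelling out the coprimeness argument for $L=G$), but the structure is identical.
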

\begin{proof} Let $\overline{G_0}=G_0/\Phi(G_0)$ and denote by $\overline{G}$ the image of $G$ in $\overline{G_0}$. By Lemma \ref{5} $\overline{G}$ is generated by $(e,r)$-boundedly many elements from $I_{\overline{G}}(\al)$, say $\overline{x_1},\ldots, \overline{x_s}$ and so $\overline{G_0}=\langle \overline{\al}, \overline{x_1},\ldots, \overline{x_s}\rangle$. Hence $G_0=\langle \al, x_1,\ldots, x_s\rangle$, where $x_1,\ldots,x_s\in I_G(\al)$. Thus $G$ is generated by the $\al$-orbits of $x_1,\ldots,x_s$ and the result follows.
\end{proof}
In what follows $S(K)$ denotes the soluble radical of a group $K$.
\begin{lemma}\label{8} Suppose that $G$ is soluble-by-semisimple-by-soluble. Then $G$ is generated by an $(e,r)$-bounded number of  elements from $I_G(\al)$.
\end{lemma}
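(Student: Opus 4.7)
The plan is to proceed by induction on $|G|$, using Lemma~\ref{5} as the base case and Lemma~\ref{7} to handle a degenerate branch. Let $R$ be an $\al$-invariant soluble normal subgroup of $G$ such that $G/R$ is semisimple-by-soluble, which exists by hypothesis. If $R=1$, then $G$ itself is semisimple-by-soluble and Lemma~\ref{5} applies directly. So assume $R\neq 1$ and work in $G_0=G\langle\al\rangle$. Choose a minimal subgroup $H_0$ of $G_0$ with $G_0=RH_0$; as in the proof of Lemma~\ref{5}, we may assume $\al\in H_0$. Set $H=H_0\cap G$, so $H$ is $\al$-invariant and $G=RH$. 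Lemma~\ref{03} gives $H\cap R=H_0\cap R\leq\Phi(H_0)$. Since $G=[G,\al]$ forces $G/R=[G/R,\al]$, I obtain $H=[H,\al](H\cap R)$, hence $H_0=[H,\al]\langle\al\rangle\cdot(H\cap R)=[H,\al]\langle\al\rangle$ by the non-generator property of the Frattini subgroup; intersecting with $G$ and using $\langle\al\rangle\cap G=1$ yields $H=[H,\al]$, exactly as in Lemma~\ref{5}.

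Now I split into two cases. If $H_0=G_0$, then $R\leq\Phi(G_0)$, so $G_0/\Phi(G_0)$ is a quotient of the semisimple-by-soluble group $G_0/R$ and is itself semisimple-by-soluble; Lemma~\ref{7} then supplies an $(e,r)$-bounded generating set of $G$ from $I_G(\al)$. Otherwise $H_0\lneq G_0$, forcing $H\lneq G$. The subgroup $H$ is $\al$-invariant with $H=[H,\al]$; the hypothesis of Theorem~\ref{main} is inherited because $I_H(\al)\subseteq I_G(\al)$; and $H$ is soluble-by-semisimple-by-soluble, with $H\cap R$ as its soluble part and $H/(H\cap R)\cong G/R$ as its semisimple-by-soluble quotient. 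By the induction hypothesis applied to $H$, there exist $(e,r)$-boundedly many elements $a_1,\dots,a_s\in I_H(\al)\subseteq I_G(\al)$ that generate $H$.

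Independently, $[R,\al]$ is soluble, $\al$-invariant and satisfies $[R,\al]=[[R,\al],\al]$ by the standard coprime-action identity, so Proposition~\ref{soluble} provides $(e,r)$-boundedly many elements $b_1,\dots,b_t\in I_{[R,\al]}(\al)\subseteq I_G(\al)$ that generate $[R,\al]$. Since $G=RH$ with $R$ normal and $\al$-invariant, $H$ $\al$-invariant, and $G=[G,\al]$, Lemma~\ref{55} yields $G=\langle a_1,\dots,a_s,b_1,\dots,b_t\rangle$, completing the inductive step.

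The main delicate point is the identity $H=[H,\al]$, which proceeds via the Frattini-style non-generator argument rehearsed in Lemma~\ref{5}; once this is in hand, the dichotomy $H_0=G_0$ versus $H_0\lneq G_0$ is forced, and all remaining work is bookkeeping with Lemmas~\ref{7} and~\ref{55} and Proposition~\ref{soluble}. A secondary subtlety is ensuring $\al\in H_0$ can be arranged without losing minimality; this is handled as in Lemma~\ref{5}.
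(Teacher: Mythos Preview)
Your argument has a genuine gap in the inductive structure. The claim to be proved is the existence of a \emph{uniform} bound $f(e,r)$ on the number of generators from $I_G(\al)$, valid for all $G$ satisfying the hypotheses. In your inductive step (the branch $H_0\lneq G_0$), you invoke the induction hypothesis to get $H$ generated by $s$ elements with $s$ ``$(e,r)$-bounded'', and then add $t$ further generators from $[R,\al]$ via Proposition~\ref{soluble}. This only shows that $G$ is generated by $s+t$ elements; it does not show $G$ is generated by $s$ elements. Since each pass through the recursion adds $t$ new generators, the total grows with the recursion depth, and you have not bounded that depth in terms of $e$ and $r$. As written, the induction on $|G|$ therefore does not close.

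The repair is immediate and in fact collapses your dichotomy: you never need to recurse. Having established $H\cap R=H_0\cap R\leq\Phi(H_0)$ via Lemma~\ref{03}, observe that $H_0/\Phi(H_0)$ is a quotient of $H_0/(H_0\cap R)\cong G_0/R$, which is semisimple-by-soluble. Hence Lemma~\ref{7} applies \emph{directly to $H$} (with $H_0$ playing the role of $G_0$), regardless of whether $H_0=G_0$ or not, and gives $H$ generated by $(e,r)$-boundedly many elements of $I_H(\al)\subseteq I_G(\al)$. This is precisely what the paper does; it then combines with Proposition~\ref{soluble} for $[S,\al]$ and Lemma~\ref{55}, exactly as in your final paragraph. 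Equivalently, if you insist on taking $R=S(G)$, then $S(H)=H\cap R\leq\Phi(H_0)$, so on the very next recursive call the minimal supplement is forced to be all of $H_0$ and you land in your Lemma~\ref{7} branch; the recursion depth is at most one. Either way, your detailed verification of $H=[H,\al]$ and the Frattini argument are correct and useful, but the induction is superfluous.
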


\begin{proof} Let $S=S(G)$. Let $H_0$ be a minimal subgroup of $G_0$ such that $G_0=SH_0$. Again, without loss of generality we can assume that $\al\in H_0$. Since $H_0\cap S\leq\Phi(H_0)$, Lemma \ref{7} shows that $H=H_0\cap G$ is generated by an $(e,r)$-bounded number of  elements from $I_G(\al)$. Note that by Proposition \ref{soluble} also $[S,\al]$ is generated by an $(e,r)$-bounded number of  elements from $I_G(\al)$. The result follows from Lemma \ref{55}.
\end{proof} 
\begin{lemma}\label{4} Assume that $S(G)=1$. Then $G$ has an $\al$-invariant semisimple-by-soluble normal subgroup of $(e,r)$-bounded index.
\end{lemma}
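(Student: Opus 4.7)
The plan is to take as the required subgroup the kernel of the permutation action of $G_0=G\langle\al\rangle$ on the simple factors of the socle of $G$, and then to invoke Schreier's conjecture to control the quotient. First I would observe that, because $S(G)=1$, no minimal normal subgroup of $G$ is abelian, and so the socle $L:=\mathrm{soc}(G)=S_1\times\cdots\times S_l$ is a direct product of nonabelian simple groups; $L$ is characteristic, hence $\al$-invariant. The key structural input is that $C_G(L)=1$: every nontrivial normal subgroup of $G$ contains a minimal normal subgroup of $G$, which must lie in $L$, so $C_G(L)\cap L\le Z(L)=1$ forces $C_G(L)=1$. Consequently $G$ embeds into $\mathrm{Aut}(L)$.

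Next, Lemma \ref{444} supplies an $(e,r)$-bound on $l$ (and on the rank of $L$). Let $K$ be the kernel of the natural action of $G_0$ on $\{S_1,\dots,S_l\}$ and put $M:=K\cap G$. Then $M$ is $\al$-invariant (as $K$ is $\al$-invariant in $G_0$), normal in $G$, and $[G:M]\le l!$, so its index is $(e,r)$-bounded. By construction $M$ normalizes every $S_i$, so the embedding $G\hookrightarrow\mathrm{Aut}(L)$ restricts to an injection $M\hookrightarrow\prod_{i=1}^{l}\mathrm{Aut}(S_i)$ sending $L$ onto $\prod_i\mathrm{Inn}(S_i)$.

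The final step is to prove that $M$ is semisimple-by-soluble. By Schreier's conjecture, a corollary of the classification of finite simple groups, $\mathrm{Out}(S_i)$ is soluble for every $i$, so $\prod_i\mathrm{Out}(S_i)$ is soluble. Therefore $M/L$ embeds in a soluble group and is itself soluble, and $M$ is a semisimple-by-soluble, $\al$-invariant, normal subgroup of $G$ of $(e,r)$-bounded index, as required. The main conceptual hurdle is the identification $C_G(L)=1$, which hinges on $S(G)=1$; once this is available, the argument is little more than combining Lemma \ref{444} with Schreier's theorem and an elementary index bound from the permutation action on the $l$ simple factors.
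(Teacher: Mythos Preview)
Your argument is correct and follows essentially the same route as the paper: both take the kernel of the permutation action of $G_0$ on the simple factors of the socle, bound its index via Lemma~\ref{444}, and show the kernel is semisimple-by-soluble. The only difference is cosmetic: the paper cites \cite[Lemma~2.1]{ijm2015} for the solubility of $K/N$, whereas you unpack that step by explicitly verifying $C_G(L)=1$ and invoking Schreier's conjecture to conclude that $M/L\hookrightarrow\prod_i\mathrm{Out}(S_i)$ is soluble.
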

\begin{proof} Let $N=S_1\times\dots\times S_l$ be the socle of $G$. Here $S_1,\dots,S_l$ are the subnormal simple subgroups. In view of Lemma \ref{444} $l$ is $(e,r)$-bounded. The group $G_0$ naturally acts on the set $\{S_1,\dots,S_l\}$ by conjugation. Let $K$ be the kernel of this action. By \cite[Lemma 2.1]{ijm2015}, the quotient $K/N$ is soluble. Therefore $K$ is a semisimple-by-soluble normal subgroup of $(e,r)$-bounded index.
\end{proof}

\begin{lemma}\label{9} The group $G_0$ has a soluble-by-semisimple-by-soluble normal subgroup of $(e,r)$-bounded index.
\end{lemma}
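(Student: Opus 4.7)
The plan is a routine bookkeeping reduction to Lemma \ref{4}. Set $S=S(G)$, the soluble radical of $G$. Since $S$ is characteristic in $G$ and $G$ is normal in $G_0$, $S$ is $\al$-invariant and normal in $G_0$. Form the quotient $\tilde G=G/S$ and note that $S(\tilde G)=1$.

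Next I would check that $\tilde G$ inherits the standing hypotheses of this subsection. Clearly $[\tilde G,\al]=\tilde G$, and by Lemma \ref{20}(ii) we have $I_{\tilde G}(\al)=\{gS:g\in I_G(\al)\}$. Any subgroup of $\tilde G$ generated by a subset of $I_{\tilde G}(\al)$ is therefore an epimorphic image of a subgroup of $G$ generated by a subset of $I_G(\al)$, and hence can be generated by $r$ elements. Applying Lemma \ref{4} to $\tilde G$ yields an $\al$-invariant semisimple-by-soluble normal subgroup $\tilde K$ of $\tilde G$ of $(e,r)$-bounded index.

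Finally, let $K$ be the preimage of $\tilde K$ in $G$. Then $K$ is $\al$-invariant and normal in $G_0$, with $[G_0:K]=e\cdot[\tilde G:\tilde K]$, which is $(e,r)$-bounded. Since $S$ is soluble and normal in $K$, while $K/S=\tilde K$ is semisimple-by-soluble, $K$ is soluble-by-semisimple-by-soluble, as required. The only step that needs any thought is the transfer of hypotheses to $G/S$, which is immediate from Lemma \ref{20}(ii); there is no genuine obstacle beyond invoking Lemma \ref{4} in the right quotient.
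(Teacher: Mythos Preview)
Your proof is correct and is exactly the natural unpacking of the paper's one-line proof, which simply reads ``This is immediate from Lemma \ref{4}.'' You have spelled out precisely the intended argument: pass to $G/S(G)$, verify the standing hypotheses survive via Lemma \ref{20}(ii), apply Lemma \ref{4}, and pull back.
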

\begin{proof} This is immediate from Lemma \ref{4}.
\end{proof}

In view of Lemma \ref{44} it is sufficient to prove Theorem \ref{main} in the case where $G$ is soluble-by-semisimple-by-soluble. We already know that in this case $G$ is generated by $(e,r)$-boundedly many elements from $I_G(\al)$.

\begin{lemma}\label{339} Assume that $G$ is soluble-by-semisimple-by-soluble. Let $N$ be an $\al$-invariant abelian normal subgroup of $G$. Then $[N,G]$ has $(e,r)$-bounded rank.
\end{lemma}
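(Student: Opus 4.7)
The plan rests on two observations. First, the containment $[N,G]\leq\langle[N,\al]\rangle^G$ follows from Lemma \ref{20}(iii): since $\al$ acts trivially on $N/\langle[N,\al]\rangle^G$ and $G=[G,\al]$, the group $G$ centralizes this quotient, forcing $[N,G]\leq\langle[N,\al]\rangle^G$. Second, using the soluble-by-semisimple-by-soluble series $1\leq S\leq M\leq G$, the fact that $M/S$ has no nontrivial abelian normal subgroup forces $N\cap M=N\cap S$; setting $L=N\cap M$, one gets $[N,M]\leq L$, so $M$ centralizes $N/L$ and the $G$-action on $N/L$ factors through the soluble quotient $G/M$.

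I would reduce to the case that $N$ is an elementary abelian $p$-group via primary decomposition (each $N_p$ is characteristic, hence $G$-normal and $\al$-invariant) followed by passage to the Frattini quotient. After this reduction $N$ becomes an $\mathbb{F}_p[G\langle\al\rangle]$-module and the goal is to bound $\dim[N,G]$.

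The rank of $[N,G]$ is at most the rank of $[N,G]\cap L$ plus $\dim([N,G]L/L)$; the second term equals $\dim[N/L,G/M]$. For this term, a composition-factor analysis of $N/L$ as a $(G/M)\langle\al\rangle$-module, together with the coprime splitting of $\al$, yields $\sum_W\dim[W,\al]=\dim[N/L,\al]\leq r$ summed over simple factors $W$. If $[W,\al]=0$ then Lemma \ref{20}(iii) together with $G/M=[G/M,\al]$ forces $G/M$ to act trivially on $W$, so the trivial factors contribute nothing to $[N/L,G/M]$; the remaining at most $r$ nontrivial factors each have dimension at most $\delta(e,r)$ by Proposition \ref{cristina} applied to the soluble $G/M$. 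For the first term, $L\leq S$ is abelian, $\al$-invariant, and normal in $G$, and lies inside the soluble radical, so one may run the same composition-factor argument inside the soluble group $S$ (invoking Corollary \ref{cor} and Proposition \ref{soluble} for the needed structural bounds), yielding an $(e,r)$-bound on the rank of $[N,G]\cap L$.

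The main obstacle will be the precise handling of trivial composition factors in positive characteristic, where modules need not be semisimple. This is addressed by observing that since $\al$ is semisimple on $N/L$ (its order being coprime to $p$), any $(G/M)\langle\al\rangle$-module whose only composition factors are trivial must itself be $\al$-trivial, and then Lemma \ref{20}(iii) forces the $G$-action to be trivial as well. A second delicate point is ensuring non-circularity of the bound on the $L$-part, which is managed by first establishing the soluble version of the lemma independently via the same Proposition \ref{cristina}-based composition-factor method.
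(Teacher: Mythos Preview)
Your decomposition collapses. Since $N$ is an abelian normal subgroup of $G$, it lies in the soluble radical $S$; hence with $L=N\cap M$ you get $L=N$ and $N/L=1$. The entire task is therefore your ``first term'' $[N,G]\cap L=[N,G]$, for which you only say ``run the same composition-factor argument inside the soluble group $S$''. But the commutator you must bound is $[N,G]$, not $[N,S]$: the whole (non-soluble) group $G$ acts on $N$, and a soluble version of the lemma gives no control over $[N,G]$. Your composition-factor method would need Proposition~\ref{cristina} for the simple $G\langle\al\rangle$-constituents of $N$, and that proposition is stated only for a \emph{soluble} acting group; you have not supplied a replacement. Likewise, the first observation $[N,G]\le\langle[N,\al]\rangle^G$ is correct but does not by itself bound the rank of the normal closure.

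The paper's argument avoids module theory entirely and is more elementary. It invokes Lemma~\ref{8} to write $G=\langle b_1,\dots,b_t\rangle$ with $t$ $(e,r)$-bounded and each $b_j\in I_G(\al)$, so that $[N,G]=[N,b_1]\cdots[N,b_t]$ since $N$ is abelian. For a single $b=a^{-1}a^{\al}\in I_G(\al)$ one sets $N_0=C_N(\al)\cap C_N(\al)^{a^{-1}}$; because $N=C_N(\al)\times[N,\al]$ and $[N,\al]$ has rank at most $r$, one gets $r(N/N_0)\le 2r$, and a direct calculation shows $N_0\le C_N(b)$. The map $x\mapsto[x,b]$ then exhibits $[N,b]$ as generated by the images of $2r$ elements, so $r([N,b])\le 2r$ and $r([N,G])\le 2rt$. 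The key idea you are missing is precisely this pointwise control of $[N,b]$ for $b\in I_G(\al)$, which sidesteps any need to analyse $G$-module structure.
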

\begin{proof} By Lemma \ref{8} we know that $G$ is generated by $(e,r)$-boundedly many elements from $I_G(\al)$, say $b_1,\ldots,b_t$. Note that $[N,G]=[N,b_1]\ldots[N,b_t]$. So it is sufficient to bound the rank of $[N,b_i]$, for each $b_i\in \{b_1,\ldots,b_t\}$. By Lemma  \ref{20}(i)  we have $N=C_N(\al)\times[N,\al]$.  Take any $b\in  \{b_1,\ldots,b_t\}$ and choose $a\in G$ such that $b=a^{-1}a^{\al}$. Set $N_0=C_N(\al)\cap C_N(\al)^{a^{-1}}$. Since $[N,\al]$ has rank $r$ by hypothesis, we have $r(N/N_0)\leq 2r$. We claim that  $N_0\leq C_G(b)$. Indeed, choose $x\in C_N(\al)$ such that $x^{a^{-1}}\in C_N(\al)$. Then, we have $x^{a^{-1}}=(x^{a^{-1}})^\al$ and so $x$ commutes with $b=a^{-1}a^{\al}$ as claimed. Choose now elements $x_1,\ldots, x_{2r}$  that generate $N$ modulo $N_0$. By using linearity in $N$ and the fact that $N_0$ centralizes $b$, we deduce that $[N,b]$ is generated by $[x_1,b], \ldots,[x_{2r},b]$. Hence the result.
\end{proof}

\begin{proof}[Proof of Theorem \ref{main}] Recall that $G$ is a finite group admitting a coprime automorphism $\al$ of order $e$ such that any subgroup generated by a subset of $I_G(\al)$ can be generated by $r$ elements. We wish to prove that $[G,\al]$ has $(e,r)$-bounded rank. Without loss of generality we assume that $G=[G,\al]$.

As noted above, the combination of Lemma \ref{44} and Lemma \ref{9} ensures that it is sufficient to prove the result in the case where $G$ is soluble-by-semisimple-by-soluble. Hence, we assume that $G$ has a characteristic series $$1\leq S\leq T\leq G$$ such that $S$ and $G/T$ are soluble while $T/S$ is semisimple. Corollary \ref{cor} shows that the Fitting height of $G/T$ and $[S,\al]$ is $(e,r)$-bounded. Note that $[S,\al]$ is subnormal in $G$. Therefore the Fitting height of the normal closure $\langle [S,\al]^G\rangle$ equals that of $[S,\al]$. In view of Lemma \ref{20}(iii) the quotient $S/\langle [S,\al]^G\rangle$ is central in $G/\langle [S,\al]^G\rangle$. Therefore $G$ has a characteristic series of $(e,r)$-bounded length, say $l=l(e,r)$, all of whose factors are either semisimple or nilpotent. Moreover, there is at most one semisimple factor in the series and, by Lemma \ref{3}, it is of $(e,r)$-bounded rank. We will prove the theorem by induction on $l$.

If $l=1$, then $G$ is either semisimple or nilpotent. In the former case the result follows from Lemma \ref{3} and in the latter one from Corollary \ref{nilpotent}. Therefore we assume that $l\geq 2$. Let $N$ be the last term of the series. By induction $G/N$ has $(e,r)$-bounded rank. 

If $N\leq Z(G)$, then the rank of $G/Z(G)$ is bounded and a theorem of Lubotzky and Mann \cite{LM} guarantees that $G'$ has $(e,r)$-bounded rank (see also \cite{KS}). Thus  we can pass to $G/G'$ and simply assume that $G$ is abelian, whence the result is immediate. We therefore assume that $N$ is not central in $G$. If $N$ is semisimple, we have nothing to prove since the semisimple quotient of the series has $(e,r)$-bounded rank. Hence, we assume that $N$ is nilpotent. In this case the rank of $N$ is equal to the  rank of some Sylow $p$-subgroup $P$ of $N$. Thus, passing to $G/O_{p'}(N)$ without loss of generality, we can assume that $N=P$.  

We note that $P$ has an $(e,r)$-bounded number of generators. Indeed, pass to the quotient $G/\Phi(P)$ and assume that $P$ is elementary abelian. By Lemma \ref{339} $[P,G]$ has $(e,r)$-bounded rank and by the above this is also true for $G/[P,G]$. Hence, $P$ has $(e,r)$-boundedly many generators as well. 

Next, we claim that for any $i\geq 2$ there exists a number $m_i=m_i(i,e,r)$, depending only on $i,e$ and $r$, such that $V=\gamma_i(P)$ has $m_i$-bounded number of generators. We can pass to the quotient $G/\Phi(V)$ and assume that $V$ is elementary abelian. Now $[V,\al]$ is an elementary abelian $r$-generated group, so $|[V,\al]|\leq p^r$. Thus, \ by Lemma \ref{22},  we have $V\leq  Z_{2r+1}(O_p(G))$ so, in particular, $V\leq Z_{2r+1}(P)$ and deduce that the nilpotency class of $P/\Phi(V)$ is bounded in terms of $i$ and $r$ only. Since $P$ has an $(e,r)$-bounded number of generators, we conclude that $r(P/\Phi(V))$ is $(i,e,r)$-bounded as well. Therefore $V$ is $m_i$-generated for some $(i,e,r)$-bounded number $m_i$, as claimed.

Let $s(r_0)$ be as in Lemma \ref{24} and let $l(r)$ be as in Lemma \ref{23}. Take $M=\gamma_{2\lambda+1}(P)$, where $\lambda=s(l(r))$.  We want to prove that $M$ is powerful. In order to show that $M'\leq M^p$, we assume that $M$ is of exponent $p$ and prove that $M$ is abelian.  Note that the subgroup $[M,\al]$ is of exponent $p$. By Lemma \ref{23}  the rank of $[M,\al]$ is at most $l(r)$. It follows from Lemma \ref{24} that $|I_M(\al)|\leq p^{s(l(r))}=p^\lambda$. Now Lemma \ref{22} yields that $M\leq Z_{2\lambda+1}(P)$ . Since $[\gamma_i(P),Z_i(P)]=1$, for any positive integer $i$, we conclude that $M$ is abelian, as required. 

Let now $d_0$ be the minimal number such that $M$ is $d_0$-generated. It was shown above that $d_0$ is an $(e,r)$-bounded integer. Since $M$ is powerful, it follows from \cite[Theorem 2.9]{DDM} that $r(M)\leq d_0$, and so the rank of $M$ is $(e,r)$-bounded.  Since  the nilpotency class of $P/M$ is $(e,r)$-bounded and $P$ has $(e,r)$-boundedly many generators, we deduce that  $r(P/M)$ is $(e,r)$-bounded as well. Now $r(P)\leq r(P/M)+r(M)$ and the result follows. This concludes the proof.
\end{proof}

\end{document}